\def \R {{\mathbb{R}}}
\def \Q {{\mathbb{Q}}}
\def \N {{\mathbb{N}}}
\def \Z {{\mathbb{Z}}}
\def \C {{\mathbb{C}}}
\def \F {{\mathbb{F}}}
\def \K {{\mathbb{K}}}
\def \L {{\mathbb{L}}}
\def \S {{\mathcal{S}}}
\def \Od {{\mathcal{O}}}
\def \ep {{\epsilon}}
\newcommand{\sgn}{\operatorname{sgn}}
\newcommand{\Res}{\operatorname{Res}}
\newcommand{\lcm}{\operatorname{lcm}}
\newtheorem*{theorem*}{Theorem}
\newtheorem{theorem}{Theorem}
\newtheorem{conj}[theorem]{Conjecture}
\newtheorem{qu}[theorem]{Question}
\newtheorem*{ex*}{Example}
\title{Survey on irreducibility of trinomials}
\author {Biswajit Koley, A. Satyanarayana Reddy\\%\thanks{The research of this author is supported by Matrics MTR/2019/001206  of SERB, India.} \\
Department of 
Mathematics, Shiv Nadar 
University, India-201314\\ (e-mail: 
bk140@snu.edu.in, satya.a@snu.edu.in).
  }
\date{}
\begin{document}
\maketitle
\begin{abstract}
Let $a,b,c$ be non-zero integers and $f(x)=ax^n+bx^m+c$ be a trinomial of degree $n$. We surveyed the irreducibility criteria of $f(x)$ over rational numbers. 

% Let $p$ be a prime number and $k\ge 2, \ep_1, \ep_2\in \{\, -1, +1\,\}$.  The irreducibility of $x^n+p\ep_1x+p^k\ep_2$ over $\Q$ has also been proved. 
\end{abstract}
{\bf{Key Words}}: Irreducible polynomials, cyclotomic polynomials.\\
{\bf{AMS(2010)}}: 11R09, 12D05.\\
\section{Introduction}
In modern times polynomial factorization has become a fundamental part of computer algebra systems. The problem of factoring a polynomial into lower degree polynomials has a long and renowned history. D. Knuth \cite{knuth} traces back to Isaac Newton who first attempted to factor a polynomial of arbitrary degree. In {\em Arithmetica Universalis} (1707), Newton gave a method to find linear and quadratic factors of a polynomial with integer coefficients. Since then  several mathematicians attempted to find irreducibility criteria for different kinds of  polynomials and many other tried to find significant properties of their factors when reducible. For example, knowing one of its factors is a cyclotomic polynomial facilitates us to reduce the computational time while checking the reducibility of a family of polynomials. Before defining the cyclotomic polynomial,  recall that the reciprocal of a polynomial $f(x)=a_nx^n+\cdots+a_1x+a_0$ of degree $n$ is defined as $\tilde{f}(x)=x^nf\left(\frac{1}{x}\right)=a_0x^n+a_1x^{n-1}+\cdots+a_n.$ A polynomial $f(x)$ is said to be a {\em reciprocal polynomial} if $f(x)=\pm\tilde{f}(x)$, otherwise it is called {\em non-reciprocal.} It follows from the definition that if $f(x)$ is a reciprocal polynomial, $f(0)\ne 0$, then $f(x)$ is reducible if and only if $\tilde{f}(x)$ is reducible. If $\alpha$ is a nonzero root of a reciprocal polynomial $f(x)$, then $\frac{1}{\alpha}$ is also a root of $f(x)$. One of the widely studied reciprocal families is the family of cyclotomic polynomials. The $n^{\text{th}}$ cyclotomic polynomial is defined as 
$$ \Phi_n(x)=\underset{(j,n)=1}{\prod\limits_{1\le j\le n}} \left(x-e^{\frac{2\pi i j}{n}}\right), i^2=-1. $$

The irreducibility problem of binomials $ax^n+b, a,b\in \Q$ can be converted to the irreducibility of binomials of the form $x^n-a, a\in \Q$. It  has been resolved completely. The criterion is now known as Capelli's theorem due to Capelli \cite{capelli}. 
\begin{theorem}[\cite{capelli}, \cite{redei}, \cite{vahlen}, \cite{biswajit5}]
Let $n\ge 2, a\in \Q$. The polynomial $x^n-a$ is reducible over $\Q$ if and only if either $a=b^t$ for some $t|n, t>1$ or $4|n, a=-4b^4$ for some $b\in \Q$.
\end{theorem}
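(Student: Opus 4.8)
The plan is to prove both implications of the equivalence, after recasting the reducibility hypothesis in a more convenient form. Note that ``$a=b^t$ for some $t\mid n$, $t>1$'' is equivalent to ``$a\in\Q^p$ for some prime $p\mid n$'': if $a=b^t$ then any prime $p\mid t$ gives $a=(b^{t/p})^p$, and conversely $a=c^p$ with $p\mid n$ is the case $t=p$. So the theorem is equivalent to the Vahlen--Capelli criterion: $x^n-a$ is irreducible over $\Q$ if and only if $a\notin\Q^p$ for every prime $p\mid n$ and, when $4\mid n$, $a\notin-4\Q^4$.

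For the reducible direction I would simply exhibit factorizations. If $a=b^t$ with $n=tm$ and $t>1$, then $x^m-b$ is a proper divisor of $x^n-a=(x^m)^t-b^t$, since $1\le m<n$. If $4\mid n$, say $n=4k$, and $a=-4b^4$, then the Sophie Germain identity applied to $y=x^k$ gives
\[
 x^n-a=(x^k)^4+4b^4=\bigl(x^{2k}-2bx^k+2b^2\bigr)\bigl(x^{2k}+2bx^k+2b^2\bigr),
\]
a nontrivial factorization. In either case $x^n-a$ is reducible.

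The substance is the converse: assuming $a\notin\Q^p$ for all primes $p\mid n$ and $a\notin-4\Q^4$ when $4\mid n$, show $x^n-a$ is irreducible, equivalently that $[\Q(\alpha):\Q]=n$ for a root $\alpha$. I would first reduce to prime-power exponents. Writing $n=\prod_i p_i^{e_i}$ and setting $\alpha_i=\alpha^{n/p_i^{e_i}}$, each $\alpha_i$ is a root of $x^{p_i^{e_i}}-a$; if each such binomial is irreducible then $p_i^{e_i}=[\Q(\alpha_i):\Q]$ divides $[\Q(\alpha):\Q]$, and since the $p_i^{e_i}$ are pairwise coprime their product $n$ divides $[\Q(\alpha):\Q]\le n$, forcing equality. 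The hypotheses on $x^n-a$ pass to each $x^{p_i^{e_i}}-a$, so it suffices to treat $n=p^e$. For prime exponent $p$ the key is the standard fact that $x^p-a$ is irreducible whenever $a\notin\Q^p$: if it factored, the constant term of a proper factor of degree $d$ would have the form $\pm\zeta^s\alpha^d$ for a primitive $p$-th root of unity $\zeta$ and $0<d<p$, and raising to the $p$-th power together with $\gcd(d,p)=1$ would exhibit $a$ as a $p$-th power.

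For the inductive step $p^{e-1}\to p^e$, set $\beta=\alpha^p$, a root of the (inductively irreducible) $x^{p^{e-1}}-a$, so $[\Q(\beta):\Q]=p^{e-1}$; then $\alpha$ is a root of $x^p-\beta$ over $\Q(\beta)$, and it remains to show $\beta\notin\Q(\beta)^p$. For $p$ odd this follows from a norm computation: the minimal polynomial $x^{p^{e-1}}-a$ gives $N_{\Q(\beta)/\Q}(\beta)=a$, so $\beta\in\Q(\beta)^p$ would force $a\in\Q^p$, a contradiction. I expect the genuine obstacle to be the prime $2$, where the same norm argument only yields $-a\in\Q^2$, which is not itself a contradiction; this is exactly where the exceptional family $-4\Q^4$ must be isolated. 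Already for $x^4-a$ one checks directly that $\sqrt a$ is a square in $\Q(\sqrt a)$ precisely when $a=-4u^4$, and the higher $2$-power cases require carefully tracking this condition up the tower $\Q\subset\Q(\sqrt a)\subset\cdots$. Handling this $p=2$ analysis cleanly, and confirming that no further exceptions appear at higher levels, is the crux of the argument.
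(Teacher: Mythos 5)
The paper itself offers no proof of this statement (it is quoted as a classical result, with four references), so your argument has to stand on its own. Most of it does: the reformulation of ``$a=b^t$, $t\mid n$, $t>1$'' as ``$a\in\Q^p$ for some prime $p\mid n$'', the two explicit factorizations for the reducible direction (including the Sophie Germain identity), the reduction to prime-power exponents via coprimality of degrees, the prime-exponent case via the constant term of a putative factor, and the odd-prime-power induction via $N_{\Q(\beta)/\Q}(\beta)=a$ are all correct, and this is indeed the standard Vahlen--Capelli architecture. But the proof is not complete: for $p=2$ you verify only $e=1$ and $e=2$, and then declare that handling $x^{2^e}-a$ for $e\ge 3$ and ``confirming that no further exceptions appear at higher levels is the crux of the argument.'' That crux is precisely the nontrivial content of the theorem; announcing it is not proving it, so as written there is a genuine gap.

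Concretely, two ideas are missing. First, your bottom-up induction (norming everything down to $\Q$) cannot work for $p=2$, because the norm from an even-degree field kills the sign: it yields $-a\in\Q^2$, and the distinction between $a$ and $-a$ is exactly where the exceptional family $-4\Q^4$ lives. The standard fix is a top-down induction over a varying base field: with $\beta=\sqrt{a}$ and $k_1=\Q(\beta)$, one shows that $\beta$ again satisfies both Capelli hypotheses over $k_1$, namely $\beta\notin k_1^2$ and $\beta\notin -4k_1^4$; then the inductive hypothesis (now stated over an arbitrary field, not just $\Q$) makes $x^{2^{e-1}}-\beta$ irreducible over $k_1$, and multiplicativity of degrees gives $[\Q(\alpha):\Q]=2\cdot 2^{e-1}$, hence irreducibility of $x^{2^e}-a$. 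Second, of these two claims you only prove the first (your $a=-4u^4$ computation); the second is the missing idea, and it requires the identity $(1+i)^4=-4$. Indeed, suppose $\beta=-4\gamma^4$ with $\gamma\in k_1$. If $i\in k_1$, then $\beta=\bigl((1+i)\gamma\bigr)^4\in k_1^4\subseteq k_1^2$, contradicting the first claim. If $i\notin k_1$, then from $-\beta=(2\gamma^2)^2$ one takes $N_{k_1/\Q}$ to get $-a\in\Q^2$, say $-a=c^2$, whence $(\beta/c)^2=-1$ and $i\in k_1$ after all, a contradiction. Only with both claims in hand does the induction close, and only then is it a theorem---rather than an expectation---that no new exceptional families appear above the level $4\mid n$, $a\in-4\Q^4$.
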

Naturally, the question arises about the irreducibility criterion for trinomials  $f(x)=ax^n+bx^m+c,$ where $ a\in \N,$ $b,c\in \Z\setminus \{\, 0\,\}$ and $(a,b,c)=1$. Starting from Eisenstein, many irreducibility criteria appeared for different types of trinomials. Yet the question is not fully solved. There is no single criterion which fits to every trinomial. Although several results appeared over time, to the best of our knowledge there is no survey article on this particular topic. Therefore, in this article, we tried to  collect  an exhaustive list of the irreducibility criteria for trinomials available in the literature.

We  compiled the existing irreducibility criteria of trinomials in section~\ref{sec:rt}. In section~\ref{sec:lzt}, we studied the location of zeros of trinomials and their applications in the factorization of trinomials. The discriminant of trinomials was discussed in  section~\ref{sec:dt}. At the end of this article, we ask a few questions, answers to those questions  will enhance the list of families of irreducible trinomials.  Due to the diversity and intricacies of the theory applied in the polynomial factorization, we will concentrate on irreducibility criteria over rational numbers only. Unless specified, throughout the article, we will assume that all the polynomials have content $1$. By this assumption, the irreducibility over $\Q$ and irreducibility over $\Z$ become the same. Further, by $\ep$ or $\ep_i$, we will always mean that they belong to the set $\{\, -1, +1\,\}$.  For in depth study related to polynomial factorizations and for general references on polynomials, the reader should refer to \cite{borweinbook}, \cite{rahman}, and \cite{AS1}.

\section{Reducibility of trinomials}\label{sec:rt}
This section is divided into various subsections depending upon the irreducibility criteria, which apply to a particular type of trinomials. We will first discuss those families, whose reducibility nature has been  resolved completely. Later we proceed to compile the results providing a partial answer to the question of the irreducibility of trinomials. 

\subsection{Reducibility of $\boldsymbol{x^n+b\ep_1x^m+\ep_2}$}
Let $n, m, b\in \N$ and let $f(x)=x^n+b\ep_1x^m+\ep_2$ be a trinomial of degree $n,\ep_1,\ep_2\in \{\, -1, +1\,\}$. The polynomial $f(x)$ is reducible if and only if $\ep_2\tilde{f}(x)=x^n+b\ep_1\ep_2x^{n-m}+\ep_2$ is reducible. Thus, it is sufficient to check the reducibility of  $x^n+b\ep_1x^m+\ep_2$  for $1\le m\le \frac{n}{2}$. 
\subsubsection{Reducibility of $\boldsymbol{x^n+b\ep_1x+\ep_2}$}\label{subsec:m=1}
Perron \cite{perron} is the first mathematician who gave an irreducibility criterion of a polynomial depending upon the magnitude  of the coefficients. He proved that, 

{\bf Perron's criterion \cite{perron}:} Let $f(x)=x^n+a_{n-1}x^{n-1}+\cdots+a_1x+a_0$ be a monic polynomial with integer coefficients and $a_0\ne 0$. If 
\begin{equation*}
|a_{n-1}|>1+|a_{n-2}|+\cdots+|a_1|+|a_0|,
\end{equation*}
then $f(x)$ is irreducible over $\Z$. 

By using Perron's criterion, the polynomial $\ep_2\tilde{f}(x)=x^n+b\ep_1\ep_2x^{n-1}+\ep_2$, and hence $f(x)=x^n+b\ep_1x+\ep_2$ is irreducible for every $b\ge 3$. If $b$ is either $1$ or $2$, then there are polynomials which are reducible. For example, 
\begin{align}
& x^{3}-2x+1=(x-1)(x^2+x-1);\notag\\
& x^{8}+\ep x+1=(x^2+\ep x+1)(x^6-\ep x^5+\ep x^3-x^2+1).\notag
\end{align}
Selmer \cite{selmer} came across the question of reducibility of trinomials of the form $x^n\pm x\pm 1$ while studying continued fractions. With a change of sign in $x$, it is sufficient to consider $x^n+x+1$ and $x^n-x-1$ only. For example, if $n$ is odd and $f(x)=x^n-x+1$, then $-f(-x)=x^n-x-1.$ Hence, if $n$ is odd, then $f(x)$ is reducible if and only if $-f(-x)$ is reducible and so on. 

If $\alpha_i$ is a root of $f(x)=x^n+\ep (x+1)$, then Selmer considered the following sum 
\begin{equation*}
S_f=\sum\limits_{i=1}^n \left(\alpha_i-\frac{1}{\alpha_i}\right).
\end{equation*}  
Since $S_f$ is a symmetric function of roots of $f$, $S_f\in \Q,$ and $S_f\in \Z$ for $f(x)=x^n+\ep (x+1)$. As any factor of $f(x)$ over $\Z$ has a constant term $\pm 1,$ reducibility of $f(x)$ over $\Z$ yields an integer partition of $S_f$. Using this fact, Selmer proved that 

\begin{theorem}[Selmer \cite{selmer}]\label{selmer1}
The polynomial $x^n+\ep (x+1)$ is irreducible except for $\ep =1$ and $n\equiv 2\pmod{3}$. If $\ep=1$ and $n\equiv 2\pmod{3}$, then $x^n+x+1=\Phi_3(x)g(x)$, where $g(x)$ is either identically $1$ or an irreducible polynomial.  
\end{theorem}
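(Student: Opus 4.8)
The plan is to follow Selmer's strategy built around the integer $S_f$, separating the cyclotomic contribution from the rest. Throughout take $f(x)=x^n+\ep(x+1)$ monic with $n\ge 3$ (the quadratic case $n=2$ is checked directly: $x^2+x+1=\Phi_3(x)$ and $x^2-x-1$ is irreducible). First I would record that $f$ is separable: a common root $\alpha$ of $f$ and $f'=nx^{n-1}+\ep$ would force $\alpha^{n-1}=-\ep/n$, and combining with $\alpha^n=-\ep(\alpha+1)$ gives $\alpha=-n/(n-1)$, which is never a root since the only possible rational roots are $\pm1$. Next I would compute $S_f$: for $n\ge 3$ the coefficient of $x^{n-1}$ vanishes, so $\sum_i\alpha_i=0$, while $\sum_i\alpha_i^{-1}=-1$ because $f$ has equal linear and constant coefficients; hence $S_f=1$, independently of $\ep$.

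For the partition mechanism I would note, as in the preamble, that any monic factor $g\in\Z[x]$ of $f$ has $g(0)=\pm1$, so both $\sum_{g(\beta)=0}\beta$ and $\sum_{g(\beta)=0}\beta^{-1}$ are integers and therefore $S_g\in\Z$; moreover $S_g=\sum_{g(\beta)=0}\mathrm{Re}(\beta-\beta^{-1})=\sum\cos\theta_\beta\,(r_\beta-r_\beta^{-1})$, where $\beta=r_\beta e^{i\theta_\beta}$. Writing $f=\prod_j g_j$ as a product of irreducibles gives $\sum_j S_{g_j}=1$. The second ingredient is a trichotomy coming from $|\alpha|^n=|\alpha+1|$: a root satisfies $|\alpha|\gtrless 1$ exactly when $\mathrm{Re}(\alpha)\gtrless -\tfrac12$, and $|\alpha|=1$ forces $\alpha\in\{\omega,\bar\omega\}$, the primitive cube roots of unity. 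Evaluating $f$ at $\omega$ shows $\omega$ is a root only when $\ep=1$ and $n\equiv 2\pmod 3$; thus the only cyclotomic polynomial that can divide $f$ is $\Phi_3(x)=x^2+x+1$, occurring precisely in that case, and by separability it divides $f$ exactly once. Since $1/\omega=\bar\omega$, the two circle roots contribute $S_{\Phi_3}=0$.

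The crux is the following inequality, which is the real content of the theorem: \emph{every monic factor $g\in\Z[x]$ of $f$ possessing at least one root off the unit circle satisfies $S_g\ge 1$.} Granting this, the argument closes quickly. A factor all of whose roots lie on the unit circle is, by Kronecker's theorem, a product of cyclotomic polynomials, hence (by the previous paragraph) a power of $\Phi_3$, which by separability is $\Phi_3$ itself and contributes $0$ to $\sum_j S_{g_j}=1$. Every other irreducible factor contributes at least $1$, so at most one non-cyclotomic irreducible factor can occur, and it must carry $S=1$: if two such factors appeared the total would be $\ge 2$. Therefore $f$ is irreducible unless $\ep=1$ and $n\equiv 2\pmod3$, in which case $f=\Phi_3(x)g(x)$ with $g$ irreducible (and $g=1$ exactly when $n=2$).

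Proving the key inequality is where the difficulty concentrates. Using the contribution $\cos\theta_\beta(r_\beta-r_\beta^{-1})$ together with the trichotomy, one sees that every root inside the unit disk contributes a strictly positive amount (there $\cos\theta<0$ and $r-r^{-1}<0$), while the only negative contributions come from roots outside the disk whose real part lies in $(-\tfrac12,0)$. The factor constraint $\prod_{g(\beta)=0}|\beta|=1$, equivalently $\sum\log r_\beta=0$, couples the inner and outer radii, and the heart of the matter is to show that the positive inner contributions always outweigh the negative outer ones by at least a full unit once integrality is imposed. This balancing estimate — controlling $\mathrm{Re}(\beta-\beta^{-1})$ across all roots of an arbitrary integer factor — is the step I expect to be the main obstacle, and it is where Selmer's analysis of the root distribution of $x^n\pm x\pm1$ does the essential work.
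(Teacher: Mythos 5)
Your framework is exactly the one the paper (following Selmer) sketches: the quantity $S_f=\sum_i\bigl(\alpha_i-\alpha_i^{-1}\bigr)$ equals $1$, every monic factor $g\in\Z[x]$ of $f$ has $g(0)=\pm1$ and hence integral $S_g$, the only possible roots of $f$ on the unit circle are the primitive cube roots of unity (present exactly when $\ep=1$ and $n\equiv2\pmod3$, contributing $S_{\Phi_3}=0$), and the partition $\sum_j S_{g_j}=1$ then limits the factorization. Those steps, including your separability check and the trichotomy $|\alpha|\gtrless1\Leftrightarrow\mathrm{Re}(\alpha)\gtrless-\tfrac12$, are all correct.

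The genuine gap is the statement you yourself flag as the crux: that every monic factor $g$ of $f$ with a root off the unit circle satisfies $S_g\ge1$. Without it nothing is proved; the sign discussion (inner roots contribute positively, outer roots with real part in $(-\tfrac12,0)$ negatively) does not by itself exclude, say, a factorization with $S_{g_1}=3$ and $S_{g_2}=-2$. This inequality is where the entire difficulty of the theorem sits, and your proposal stops exactly there. It can be closed by making the constraint $\sum_{g(\beta)=0}\log|\beta|=\log|g(0)|=0$ quantitative: prove the pointwise bound $\mathrm{Re}\bigl(\beta-\beta^{-1}\bigr)>-\log|\beta|$ for every root $\beta=re^{i\theta}$ of $f$ with $r\ne1$, while roots on the circle give exactly $0$. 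Indeed, if $r<1$ the trichotomy gives $|\cos\theta|>\tfrac{1}{2r}$, so $\cos\theta\,(r-r^{-1})>\tfrac12\bigl(r^{-2}-1\bigr)\ge-\log r$; if $r>1$ and $\cos\theta\ge0$ the left side is $\ge0>-\log r$; if $r>1$ and $\cos\theta<0$ the trichotomy gives $|\cos\theta|<\tfrac{1}{2r}$, so $\cos\theta\,(r-r^{-1})>-\tfrac12\bigl(1-r^{-2}\bigr)\ge-\log r$ --- each case using $t-1\ge\log t$ with $t=r^{\pm2}$. Summing over the roots of $g$ and using $\sum\log r_\beta=0$ yields $S_g>0$, hence $S_g\ge1$ by integrality. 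With this lemma inserted your argument is complete and is essentially Selmer's; as written, the proposal reproduces the paper's framing but omits the one estimate that makes it a proof.
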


Selmer attempted to  answer the reducibility problem of trinomials $x^n+\ep_1 x^m+\ep_2$ for arbitrary values of $m$. By considering the sum  
\begin{equation*}
S_f=\sum\limits_{i=1}^n (\alpha_i^m-\frac{1}{\alpha_i^m}),
\end{equation*}
where $\alpha_i$ is a root of $x^n+\ep_1 x^m+\ep_2$, he gave an upper bound for the number of irreducible factors of it. 

\begin{theorem}[Selmer \cite{selmer}]\label{selmer2}
Let $n=dn_1, m=dm_1, (n_1,m_1)=1$ and $f(x)=x^n+\ep_1 x^m+\ep_2 $ be a polynomial of degree $n\ge 2m$. Then all the roots of $f(x)$ which lie on the unit circle are roots of unity and $\Phi_3(x^d)$ divides $f(x)$ only when $n_1+m_1\equiv 0\pmod{3}.$ Apart from cyclotomic factors, $f(x)$ has at most $m$ irreducible factors, each is of degree greater than $5$ for $n>7$.
\end{theorem}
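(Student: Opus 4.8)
The plan is to first strip the common factor $d=(n,m)$ from the exponents by writing $f(x)=g(x^d)$ with $g(y)=y^{n_1}+\ep_1 y^{m_1}+\ep_2$ and $(n_1,m_1)=1$. Since $x\mapsto x^d$ sends the unit circle to the unit circle and roots of unity to roots of unity, and since $\Phi_3(x^d)\mid g(x^d)$ exactly when $\Phi_3(y)\mid g(y)$, every assertion about $f$ reduces to the corresponding assertion about the coprime trinomial $g$. I would therefore work with $g$ throughout and translate back at the end.

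For the unit-circle roots, suppose $|\beta|=1$ and $g(\beta)=0$. Then $\beta^{n_1}$, $\ep_1\beta^{m_1}$ and $\ep_2$ are three complex numbers of modulus $1$ summing to $0$, so they are equally spaced at angle $2\pi/3$, i.e.\ a rotation of the cube roots of unity. Because $\ep_2=\pm1$ is real, this configuration forces both $\beta^{n_1}$ and $\beta^{m_1}$ to be roots of unity (cube or sixth roots, according to the signs). Choosing integers $u,v$ with $un_1+vm_1=1$ gives $\beta=(\beta^{n_1})^u(\beta^{m_1})^v$, again a root of unity, which proves the first claim. For the $\Phi_3$ statement it now suffices to decide when $g(\omega)=0$ for $\omega=e^{2\pi i/3}$; writing $g(\omega)=\omega^{n_1}+\ep_1\omega^{m_1}+\ep_2$ and running through the residues of $n_1,m_1\bmod 3$ together with the four sign choices shows that $g(\omega)=0$ can occur only when $\{\,n_1,m_1\,\}\equiv\{\,1,2\,\}\pmod 3$, that is, only when $n_1+m_1\equiv0\pmod 3$, which is the asserted necessary condition.

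For the factor count I would use Selmer's symmetric sum $S_f=\sum_i(\alpha_i^m-\alpha_i^{-m})=p_m-p_{-m}$, where $p_{\pm m}$ are the indicated power sums of the roots. Since $f$ has only three nonzero coefficients, almost all elementary symmetric functions of its roots vanish, so Newton's identities evaluate $p_m$ and $p_{-m}$ explicitly; for $n>2m$ one gets $p_m=0$ and $p_{-m}=-m\,\ep_1\ep_2$, hence
\[
S_f=m\,\ep_1\ep_2 .
\]
Because $f(0)=\ep_2=\pm1$, every monic integer factor $h$ of $f$ has constant term $\pm1$, so each $\alpha^{-1}$ is again an algebraic integer and the partial sum $S_h=\sum_{h(\alpha)=0}(\alpha^m-\alpha^{-m})$ is a rational algebraic integer, hence an integer, with $S_f=\sum_h S_h$. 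An irreducible factor is either cyclotomic — its roots are then stable under $\alpha\mapsto1/\alpha$, giving $S_h=0$ — or, by the first part, has no root on the unit circle at all; so the cyclotomic factors drop out and the remaining ones partition the integer $S_f$ of absolute value $m$.

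The main obstacle is to convert this partition into the sharp conclusion ``at most $m$ factors, each of degree greater than $5$.'' For the count one must show that each non-cyclotomic factor contributes an integer of magnitude at least $1$ \emph{and} of the same sign as $\ep_1\ep_2$, so that the parts cannot cancel; this needs the finer fact that the roots of such a factor lie off the unit circle with product $\pm1$, and it also requires separate handling of the degenerate case $n=2m$, where $S_f$ can vanish and one instead substitutes $y=x^m$ and studies $x^m=r$ for the roots $r$ of $y^2+\ep_1 y+\ep_2$. The degree bound is the most technical point: a non-cyclotomic factor of degree at most $5$ would impose rigid Diophantine relations among the exponents $n$, $m$, $0$, and ruling these out for every $n>7$ calls for explicit root-location or resultant estimates rather than the symmetric-sum bookkeeping above. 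I expect essentially all of the difficulty to reside in this final step.
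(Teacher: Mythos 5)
Your overall route is the same as the one the paper sketches for this theorem: Selmer's symmetric sum $S_f=\sum_i\left(\alpha_i^m-\alpha_i^{-m}\right)$, its integrality, and the integer partition of $S_f$ induced by a factorization of $f$. Within that framework, your treatment of the first two assertions is complete and correct: three unit vectors summing to zero must form a rotated set of cube roots of unity, the reality of $\ep_2$ then forces $\beta^{n_1}$ and $\beta^{m_1}$ to be roots of unity, and B\'ezout finishes the claim; likewise the residue check at $\omega=e^{2\pi i/3}$ correctly yields the necessity of $n_1+m_1\equiv 0\pmod 3$. The Newton-identity computation $S_f=m\,\ep_1\ep_2$ for $n>2m$ is also right.

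The gaps, however, are genuine and they sit exactly where the theorem's content lies. First, the factor count does not follow from what you have: from $S_f=\sum_h S_h$ with each $S_h\in\Z$ and $|S_f|=m$ you can conclude nothing unless you show each non-cyclotomic factor contributes a \emph{nonzero} integer and that these integers cannot cancel. Neither is automatic. In particular, if some non-reciprocal factor $h$ and its reciprocal $\tilde h$ both divided $f$, then $S_h+S_{\tilde h}=0$, so ruling out such cancellation (equivalently, controlling reciprocal pairs of factors and the sign of each $S_h$) is the heart of Selmer's argument, and it is precisely the step you leave as an ``obstacle.'' Second, the bound ``each non-cyclotomic factor has degree greater than $5$ for $n>7$'' is not attempted at all. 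Third, your opening reduction to the coprime trinomial $g(y)=y^{n_1}+\ep_1y^{m_1}+\ep_2$ is automatic only for the first two assertions: for the count you still need the observation that every irreducible factor of $g_i(x^d)$ has degree divisible by $\deg g_i$ (so $g_i(x^d)$ splits into at most $d$ irreducible pieces), and for the degree bound the reduction fails outright, since $n>7$ does not force $n_1>7$ (take $n_1=2$, $m_1=1$, $d$ large). So the proposal correctly assembles Selmer's machinery and disposes of the qualitative claims, but the two quantitative claims --- at most $m$ non-cyclotomic factors, each of degree exceeding $5$ --- remain unproved.
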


If $b=2$, then a partial answer can be drawn from the work of Schinzel \cite{AS3} on the trinomial $x^n-2x^m+1$. However for complete answer one can refer either the works of Harrington \cite{harrington} (see Theorem \ref{harrington} below) or by that of Filaseta et al. \cite{filaseta3} (see Theorem \ref{filasetatrinomial} below). The trinomial  $x^n+2\ep_1x+\ep_2$ factors as $(x\pm 1)g(x)$ for some irreducible polynomial $g(x)\in \Z[x]$ and $n\ge 3$. If $n=2,$ then $x^2+2\ep_1x+\ep_2$ is either irreducible or has a cyclotomic factor. Combiningly we have

\begin{theorem}
Let $b\in \N$ and $f(x)=x^n+b\ep_1x+\ep_2$ be a trinomial of degree $n\ge 2$. Then the followings are true.
\begin{enumerate}[label=(\alph*)]
\item If $b\ge 3$, then $f(x)$ is irreducible.
\item If $b=1, 2$, then $f(x)$ is reducible if and only if it factor as $f_c(x)f_n(x)$, where $f_c(x)$ is a product of cyclotomic polynomials, and $f_n(x)\in \Z[x]$ is either identically $1$ or an irreducible polynomial. Further in reducible cases, if $b=1$, $f_c(x)$ is either $\Phi_3(x)$ or $\Phi_6(x)$, and if $b=2$, then $f_c(x)$ is either $x+1, x-1, (x+1)^2$ or $(x-1)^2$. 
\end{enumerate}
\end{theorem}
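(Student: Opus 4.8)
The plan is to treat the three regimes $b\ge 3$, $b=1$, and $b=2$ separately, leaning respectively on Perron's criterion, Selmer's Theorem~\ref{selmer1}, and the cited results of Harrington and Filaseta. For part (a) I would pass to the reciprocal, using the fact recorded at the start of this subsection that $f$ is reducible if and only if $\epsilon_2\tilde f(x)=x^n+b\epsilon_1\epsilon_2 x^{n-1}+\epsilon_2$ is reducible. The latter is a monic integer polynomial with nonzero constant term whose coefficient of $x^{n-1}$ has absolute value $b$, while all intermediate coefficients vanish, so the quantity $1+|a_{n-2}|+\cdots+|a_1|+|a_0|$ collapses to $1+|\epsilon_2|=2$. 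Hence for $b\ge 3$ the strict inequality in Perron's criterion holds, $\epsilon_2\tilde f$ is irreducible, and therefore so is $f$; this settles (a) with no case analysis.

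For part (b) with $b=1$ the idea is to reduce every sign pattern to the form $x^n+\epsilon(x+1)$ covered by Theorem~\ref{selmer1}. The patterns $(\epsilon_1,\epsilon_2)=(1,1)$ and $(-1,-1)$ are already of this shape. For the mixed signs I would exploit the two reducibility-preserving operations $x\mapsto -x$ and multiplication by the unit $-1$: when $n$ is odd the map $g(x):=-f(-x)$ sends $x^n+x-1$ to $x^n+x+1$ and sends $x^n-x+1$ to $x^n-x-1$, while when $n$ is even the map $f(-x)$ merely interchanges the sign of the linear term. Chasing these substitutions, $f$ can be reducible only when it is equivalent to $x^n+x+1$ with $n\equiv 2\pmod 3$, in which case Theorem~\ref{selmer1} gives $x^n+x+1=\Phi_3(x)g(x)$ with $g$ either identically $1$ or irreducible.

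The one delicate point in this reduction is tracking the cyclotomic factor through $x\mapsto -x$. Since $\Phi_3(-x)=x^2-x+1=\Phi_6(x)$, the factor $\Phi_3$ is carried to $\Phi_6$ precisely in the mixed-sign reducible cases $x^n+x-1$ (with $n$ odd) and $x^n-x+1$ (with $n$ even), whereas it stays $\Phi_3$ for $x^n+x+1$ itself; this produces the stated dichotomy $f_c\in\{\Phi_3,\Phi_6\}$, and $f_n$ inherits irreducibility from $g$ because $g(-x)$ is irreducible exactly when $g$ is. For $b=2$ I would invoke the cited classification (Theorem~\ref{harrington}, or Theorem~\ref{filasetatrinomial}) to conclude that for $n\ge 3$ any factorization has the form $(x\pm 1)g(x)$ with $g$ irreducible, so $f_c$ is the single linear cyclotomic factor $x\pm 1$. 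The case $n=2$ I would dispatch by hand: among $x^2\pm 2x\pm 1$ only $x^2+2x+1=(x+1)^2$ and $x^2-2x+1=(x-1)^2$ factor, giving the squared cyclotomic factors, while $x^2\pm 2x-1$ are irreducible by the quadratic formula. I expect the genuine obstacle to lie in the $b=2$ case: showing that the cofactor $g$ is honestly irreducible, rather than merely free of further cyclotomic factors, is exactly the substance of the Harrington and Filaseta theorems and is not delivered by the elementary Perron/Selmer machinery; everything else is bookkeeping of signs and parities.
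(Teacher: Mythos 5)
Your proposal is correct and takes essentially the same route as the paper: Perron's criterion applied to the reciprocal $\ep_2\tilde{f}(x)=x^n+b\ep_1\ep_2x^{n-1}+\ep_2$ for $b\ge 3$, Selmer's Theorem~\ref{selmer1} with the sign-change bookkeeping $\Phi_3(-x)=\Phi_6(x)$ for $b=1$, and the Harrington/Filaseta classification (Theorems~\ref{harrington} and \ref{filasetatrinomial}) together with a direct check of $x^2\pm 2x\pm 1$ for $b=2$. The paper's proof is precisely this assembly of the surrounding results, including your observation that the genuine content of the $b=2$ case (irreducibility of the cofactor) is delegated to the cited theorems.
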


\subsubsection{Reducibility of $\boldsymbol{x^n+b\ep_1x^m+\ep_2, \, m\ge 2 }$}
\begin{enumerate}[label=(\Alph*)] 
\item $\boldsymbol{b\le 2}$\\
Let $f(x)=x^n+b\ep_1x^m+\ep_2$ be a trinomial of degree $n\ge 2m$  and $b$ is either $1$ or $2,\ep_1, \ep_2\in \{\, -1, +1\,\}$. 
One can get an upper bound on  the number of irreducible factors of $f(x)=x^n+\ep_1 x^m+\ep_2$ from Theorem \ref{selmer2}. In 1960, Tverberg \cite{tverberg} considered Newton's formula for power sum of roots and proved a necessary and sufficient condition for the reducibility of $f$. 

\begin{theorem}[Tverberg \cite{tverberg}]
Suppose $f(x)=x^n+\ep_1x^m+\ep_2$ is a polynomial of degree $n\ge 2m$. Then $f(x)$ is irreducible if and only if $f(x)$ has no root on the unit circle. If $f(x)$ has a root on the unit circle, then it is the roots of unity and the other factor of $f(x)$ is irreducible. 
\end{theorem}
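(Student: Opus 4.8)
The plan is to split the roots of $f$ into those on the unit circle and those off it, show the former produce a product of cyclotomic factors, and prove that what remains is irreducible; the stated equivalence then drops out. First I would fix a monic factorization into irreducibles $f = g_1\cdots g_r$ in $\Z[x]$ (by Gauss's lemma the $g_i$ may be taken monic, and since $f(0)=\ep_2\in\{-1,+1\}$ each $g_i(0)\in\{-1,+1\}$). Next I would invoke Theorem~\ref{selmer2}: every root of $f$ on the unit circle is a root of unity. Hence each such root is a zero of a cyclotomic polynomial, and because cyclotomic polynomials are irreducible over $\Q$ while $f$ is monic in $\Z[x]$, the minimal polynomial of each unit-circle root is one of the $g_i$ and is cyclotomic. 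Collecting these factors, I would write $f = C(x)\,P(x)$ with $C$ a product of cyclotomic polynomials (all its roots on the unit circle) and $P$ having no root on the unit circle.

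With this decomposition the theorem reduces to one claim: $P$ is either $1$ or irreducible. Indeed, if $f$ has no unit-circle root then $C=1$, so $f=P$ and the claim asserts that $f$ is irreducible; if $f$ has a unit-circle root then $C\neq 1$ is the cyclotomic part and $P$ is the asserted irreducible ``other factor.'' So the entire burden is to show that the off-circle part $P$ cannot split into two non-constant integer factors.

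For this crux I would follow Tverberg and use Newton's identities. Let $\alpha_1,\dots,\alpha_n$ be the roots of $f$ and $p_k=\sum_i\alpha_i^k$. The elementary symmetric functions $e_j$ vanish for every $j\notin\{n-m,n\}$, with $e_{n-m}=(-1)^{n-m}\ep_1$ and $e_n=(-1)^n\ep_2$, so Newton's identities force
\[
p_1=p_2=\cdots=p_{n-m-1}=0,
\]
and yield a simple recursion for the remaining $p_k$ driven only by $e_{n-m}$ and $e_n$; the hypothesis $n\ge 2m$ is exactly what makes the vanishing range $1\le k\le n-m-1$ long. Now suppose for contradiction that $P=P_1P_2$ with $P_1,P_2$ non-constant monic integer polynomials each having a root off the unit circle. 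The power sums $\sigma_k^{(1)},\sigma_k^{(2)}$ of the roots of $P_1,P_2$ are integers (Newton again), and their sum equals the power sum of the roots of $P$, which is $p_k$ minus the integer contribution of the root-of-unity roots of $C$. Each $P_i$ again has constant term $\pm 1$, so the product of the moduli of its roots equals $1$. I would combine the vanishing of the low power sums with these integrality and $\pm 1$ constant-term constraints to force an impossible configuration of the off-circle roots, giving the contradiction and hence the irreducibility of $P$.

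The main obstacle is precisely this last step: squeezing a contradiction out of the power-sum relations. The delicate point is that the identities $p_1=\cdots=p_{n-m-1}=0$ pin down the roots only up to degree $n-m-1$, so one must exploit the extreme sparsity of $f$ (only the exponents $0,m,n$ occur) together with the constant-term normalization to convert this into control over the factor degrees and the distribution of root moduli; the bound $n\ge 2m$ is what keeps the bookkeeping tight enough to close the argument. Once $P$ is known to be $1$ or irreducible, reading off the cases $C=1$ and $C\neq 1$ delivers both the irreducibility criterion and the asserted cyclotomic-times-irreducible factored form.
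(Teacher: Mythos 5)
Your proposal never actually proves the theorem: everything you establish is preparatory, and the decisive step is left as a declaration of intent. Splitting off the cyclotomic part via Theorem~\ref{selmer2} (unit-circle roots are roots of unity, hence their minimal polynomials are cyclotomic), computing $p_1=\cdots=p_{n-m-1}=0$ from Newton's identities, and noting that the power sums of any monic integer factor are integers with constant term $\pm 1$ --- all of this is routine bookkeeping, and none of it distinguishes an irreducible trinomial from a reducible one. The entire content of Tverberg's theorem lies in the step you defer: showing that a factorization $P=P_1P_2$ of the off-circle part is impossible. Writing ``I would combine the vanishing of the low power sums with these integrality and $\pm 1$ constant-term constraints to force an impossible configuration'' names the ingredients but supplies no argument; indeed the constraints you list are mere necessary conditions satisfied by the roots of $f$, and you give no mechanism by which they conflict with the existence of two non-constant factors. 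Concretely: from $\sigma_k^{(1)}+\sigma_k^{(2)}=p_k-(\text{contribution of }C)$ for $k\le n-m-1$ one cannot directly conclude anything about $\sigma_k^{(1)}$ alone, since the two integer sequences can cancel; making this work requires additional analytic input (e.g.\ bounds on root moduli of trinomials, or a norm/reciprocal-factor argument in the style of Ljunggren's $g=\tilde f_1 f_2$ trick) that your sketch does not contain. The hypothesis $n\ge 2m$ also plays no role where you claim it does --- the vanishing $p_k=0$ for $k<n-m$ follows from sparsity alone --- so the one place the hypothesis must genuinely be used is precisely inside the missing argument.

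Note also that the survey itself gives no proof of this theorem (it only attributes to Tverberg the method of ``Newton's formula for power sum of roots''), so your outline does align with the attributed method; but an outline whose crux is acknowledged as ``the main obstacle'' and never closed is a gap, not a proof. A secondary point your decomposition exposes: the biconditional as stated has edge cases such as $x^2+x+1=\Phi_3(x)$, which is irreducible yet has all roots on the unit circle (your $C\ne 1$, $P=1$ case); any complete write-up must reconcile this with the ``only if'' direction, for instance by reading the factorization $f=f_c\cdot 1$ as the degenerate case the theorem permits.
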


In the same year,  Ljunggren \cite{ljunggren} developed an ingenious tool to study the behavior of the factors of a polynomial $f(x)$ having a small Euclidean norm. If $f(x)=a_nx^n+a_{n-1}x^{n-1}+\cdots+a_1x+a_0$, then the {\em Euclidean norm} of $f$, denoted by $\|f\|_2,$ is defined as 
\begin{equation*}
\|f\|_2=\left( a_n^2+a_{n-1}^2+\cdots+a_1^2+a_0^2\right)^{1/2}.
\end{equation*}

Suppose $f(x)=f_1(x)f_2(x)$ is a non-trivial factorization of $f(x)$. Ljunggren considered the polynomials $g(x)=\tilde{f}_1(x)f_2(x)$ and $\tilde{g}(x)$ so that $g(x)\tilde{g}(x)=f(x)\tilde{f}(x)$. If the value of $\|f\|_2^2$ is small, then one can effectively compute the nature of the factors of $f$. It is interesting to note that $\|f\|^2_2$ is the coefficient of $x^n$ in $f(x)\tilde{f}(x)$. 

Ljunggren applied this method to study trinomials and quadrinomials of the form $x^n+\ep_1x^m+\ep_2$ and $x^n+\ep_1x^m+\ep_2x^r+\ep_3$,  respectively. For several other applications of this method, readers can refer  \cite{boyd}, \cite{smyth}, \cite{dubickas},  \cite{charles}, \cite{flammang}, \cite{grytczuk}. In the case of trinomials, Ljunggren proved that 

\begin{theorem}[Ljunggren \cite{ljunggren}]\label{ljunggrentrinomial}
If $n=n_1d, m=m_1d, (n_1,m_1)=1, n\ge 2m,$ then the polynomial
\begin{equation*}
f(x)=x^n+\ep_1x^m+\ep_2,
\end{equation*}
is irreducible, apart from the following three cases, where $n_1+m_1\equiv 0\pmod{3}$: 
\begin{center}
 $n_1,m_1$ both odd, $\ep_1=1$; $n_1$ even, $\ep_2=1$; $m_1$ even, $\ep_1=\ep_2$.
\end{center}
In these three cases, $f(x)$ is the product of the polynomial $x^{2d}+\ep_1^m\ep_2^nx^d+1$ and a second irreducible polynomial.
\end{theorem}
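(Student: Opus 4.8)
The plan is to run Ljunggren's norm argument to force a nontrivial factorization to produce a self-reciprocal factor (hence a common root of $f$ and its reciprocal $\tilde f$), then to compute those common roots explicitly and recognize them as the roots of the claimed trinomial, and finally to invoke Tverberg's theorem to see that the complementary factor is irreducible. Throughout, note that $\tilde f(x)=\ep_2x^n+\ep_1x^{n-m}+1$, and that the coefficient of $x^n$ in $f(x)\tilde f(x)$ equals $\|f\|_2^2=3$.

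First I would set up the machinery. Suppose $f=f_1f_2$ with $f_1,f_2\in\Z[x]$ monic and nonconstant; since $f(0)=\ep_2=\pm1$, both factors have constant term $\pm1$. Put $g=\tilde f_1f_2$, so that $g\tilde g=f\tilde f$ and hence the coefficient of $x^n$ in $g\tilde g$ is $3$. As that coefficient also equals $\|g\|_2^2$ and $g\in\Z[x]$, the only possibility is that $g$ has exactly three nonzero coefficients, each $\pm1$; since $\deg g=n$ and $g(0)\ne0$, this gives $g=\ep_3x^n+\ep_4x^k+\ep_5$ for some $0<k<n$. Both $f\tilde f$ and $g\tilde g$ are supported on exponents symmetric about $x^n$, the ones below $n$ being $\{0,m,n-m\}$ and $\{0,k,n-k\}$ respectively; matching supports (using $n\ge 2m$) forces $k\in\{m,\,n-m\}$, and then matching the $\pm1$ coefficients forces $g=\pm f$ or $g=\pm\tilde f$. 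In the two cases this says $\tilde f_1=\pm f_1$ or $\tilde f_2=\pm f_2$, so one factor is self-reciprocal and therefore divides both $f$ and $\tilde f$. Hence a nontrivial factorization forces $\gcd(f,\tilde f)\ne1$, i.e. a common root $\alpha$ of $f$ and $\tilde f$.

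Next I would determine these common roots. From $\alpha^n=-\ep_1\alpha^m-\ep_2$ and the reciprocal relation $\ep_2\alpha^n+\ep_1\alpha^{n-m}+1=0$, eliminating $\alpha^n$ gives $\ep_1(\alpha^{n-m}-\ep_2\alpha^m)=0$, so $\alpha^{n-2m}=\ep_2$, and substituting back yields $\alpha^{2m}+\ep_1\ep_2\alpha^m+1=0$. Thus $\beta:=\alpha^m$ is a root of $y^2+\ep_1\ep_2y+1$, i.e. a primitive cube root of unity when $\ep_1\ep_2=1$ and a primitive sixth root when $\ep_1\ep_2=-1$, with $\beta^3=\ep_1\ep_2$; in particular $\alpha$ is a root of unity. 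Using $\gcd(n_1,m_1)=1$ and B\'ezout ($un+vm=d$) I would write $\alpha^d=(\alpha^n)^u(\alpha^m)^v=\ep_2^u\beta^{2u+v}$ and check that $\alpha^d$ is itself a primitive cube or sixth root of unity; identifying its minimal quadratic as $y^2+\ep_1^m\ep_2^ny+1$ then exhibits $\gcd(f,\tilde f)$ as $x^{2d}+\ep_1^m\ep_2^nx^d+1=\Phi_3(x^d)$ or $\Phi_6(x^d)$ and shows it divides $f$. Conversely, deciding when the system $\alpha^{n-2m}=\ep_2$, $\alpha^{2m}+\ep_1\ep_2\alpha^m+1=0$ is solvable is a finite check on the residues of $n_1,m_1$ modulo the orders involved and on the signs $\ep_1,\ep_2$; this is what isolates exactly the three listed configurations, all of which satisfy $n_1+m_1\equiv0\pmod3$. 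Finally, every common root of $f$ and $\tilde f$ lies on the unit circle, so Tverberg's theorem applies and guarantees that the complementary factor $f/(x^{2d}+\ep_1^m\ep_2^nx^d+1)$ is irreducible, giving the stated factorization.

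I expect the main obstacle to be the sign-and-parity bookkeeping in the penultimate step: verifying that $(\alpha^d)^3$ equals precisely $\ep_1^m\ep_2^n$ (so that the reciprocal factor is exactly the stated quadratic in $x^d$, and not merely a cube/sixth-root quadratic), and confirming that the system is solvable in exactly the three cases rather than whenever $n_1+m_1\equiv0\pmod3$ — this is where the parities of $n_1,m_1$ and the interaction of $\ep_1,\ep_2$ with $\beta^3=\ep_1\ep_2$ must be disentangled via the relation $un+vm=d$. The degenerate boundary $n=2m$, where several exponents of $f\tilde f$ collide and the second factor can be trivial, should be handled as a separate case.
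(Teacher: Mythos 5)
Your setup is faithful to Ljunggren's method exactly as the paper sketches it (the auxiliary polynomial $g=\tilde{f}_1f_2$ with $g\tilde{g}=f\tilde{f}$ and $\|g\|_2^2=3$ forcing a self-reciprocal factor), your reduction of common roots of $f$ and $\tilde{f}$ to the system $\alpha^{n-2m}=\ep_2$, $\alpha^{2m}+\ep_1\ep_2\alpha^m+1=0$ is correct, and invoking Tverberg's theorem for the irreducibility of the complementary factor is a legitimate shortcut. Note, however, that the paper (being a survey) gives no proof of this theorem at all; it quotes Ljunggren's statement and then, in the very next paragraph, points out that the statement is \emph{false} as written. That falsity lives precisely in the step you postponed as ``the main obstacle'': the identification of the reciprocal factor as exactly $x^{2d}+\ep_1^m\ep_2^nx^d+1$ cannot be carried out, because it is not true. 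Take $f(x)=x^{50}-x^4-1$, i.e.\ $n=50$, $m=4$, $d=2$, $n_1=25$, $m_1=2$, $\ep_1=\ep_2=-1$: this is the third listed case ($m_1$ even, $\ep_1=\ep_2$), and the theorem asserts the factor $x^{2d}+\ep_1^m\ep_2^nx^d+1=x^4+x^2+1$. But your own equations give $\alpha^8+\alpha^4+1=0$ (so $\alpha^4$ is a primitive cube root of unity and $\alpha^{12}=1$) together with $\alpha^{42}=\ep_2=-1$, hence $\alpha^6=-1$ and $\alpha$ is a primitive $12$th root of unity. The reciprocal factor is therefore $\Phi_{12}(x)=x^4-x^2+1$, and indeed $x^2+x+1\nmid f$ (for a primitive cube root $\omega$ one computes $f(\omega)=\omega^2-\omega-1=2\omega^2\ne 0$). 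Equivalently, $(\alpha^d)^3=\alpha^6=-1$ while $\ep_1^m\ep_2^n=+1$, so the identity $(\alpha^d)^3=\ep_1^m\ep_2^n$ that your plan needs is simply wrong here. Carrying your plan through honestly refutes the stated theorem rather than proves it.

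So the gap is not sign-and-parity bookkeeping that remains to be done; it is that no bookkeeping can close it. As the paper notes, Ljunggren's conclusion about the cyclotomic factor is correct only when $\ep_1=\ep_2=1$; in the other sign patterns (the paper lists the families $x^{dd_1}+x^d-1$, $x^{dd_2}-x^{2d}-1$, $x^{2^{d_3}d}-x^d+1$) the actual divisor is $x^{2d}-x^d+1$, not $x^{2d}+x^d+1$. The corrected statement, in which the cyclotomic part is $\Phi_6(x^{(n,m)})$ in the problematic cases, is Theorem~\ref{biswajit2trinomial} of the paper. Your outline — the norm argument, the root-of-unity computation, Tverberg for the complementary factor, and a separate treatment of $n=2m$ — would produce a correct proof of that corrected theorem; as a proof of Theorem~\ref{ljunggrentrinomial} as stated, it cannot be completed.
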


However, the above  factor is not correct in all cases. For example, by Theorem \ref{ljunggrentrinomial}, the polynomial $x^{50}-x^4-1$ has  factor $x^4+x^2+1$ but they are divisible by $x^4-x^2+1$. More generally, if $d$ is even, $d_1\equiv 5\pmod{6}$, $d_2\equiv 1\pmod{6}$ and $d_3$ is odd, then the polynomials $x^{dd_1}+x^d-1, x^{dd_2}-x^{2d}-1$ and $x^{2^{d_3}d}-x^d+1$ are divisible by $x^{2d}+x^d+1$ by Theorem \ref{ljunggrentrinomial}. But they are divisible by $x^{2d}-x^d+1$. One can check that Ljunggren's result is true only when $\ep_1=\ep_2=1$. We corrected these errors in \cite{biswajit2} and proved that

\begin{theorem}[Koley \& Reddy \cite{biswajit2}]\label{biswajit2trinomial}
Let $n,m\in \N,$ $n\ge 2m$ and $m=2^a\cdot 3^b\cdot m_1,$ $n-2m=2^p\cdot 3^q\cdot n_1,$ where $a, b, p, q\ge 0, (m_1n_1, 6)=1.$ If $\ep_1+\ep_2\ne 2$ and $f(x)=x^n+\ep_1x^m+\ep_2$ is reducible, then $q>b$ and one of the following holds: 
\begin{enumerate}[label=(\alph*)]
\item $\ep_1=-\ep_2=1, a=p$, 
\item $\ep_1=-1, a\ep_2<p\ep_2$.
\end{enumerate}
In each of these cases, the cyclotomic part of $f$ is $f_c(x)=\Phi_6(x^{(n,m)})$.  
\end{theorem}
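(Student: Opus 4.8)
The plan is to combine Tverberg's theorem with the elementary geometry of three unit vectors, and then to convert the resulting congruences into the stated $2$-adic and $3$-adic conditions. Write $d=(n,m)$, $N=n/d$, $M=m/d$, so that $(N,M)=1$. By Tverberg's theorem $f$ is reducible precisely when it has a root $\ze$ on the unit circle, every such root is a root of unity, and the complementary factor $f_n$ is irreducible; thus it suffices to determine exactly which roots of unity satisfy $\ze^n+\ep_1\ze^m+\ep_2=0$ and to read off $f_c$ as the product of the corresponding cyclotomic factors. For such a $\ze$ the three numbers $\ze^n,\ \ep_1\ze^m,\ \ep_2$ lie on the unit circle and sum to $0$; since $|z_1+z_2|=|z_3|=1$ forces $\mathrm{Re}(z_1\bar z_2)=-\tfrac12$, any three unit vectors summing to $0$ are pairwise at angle $2\pi/3$, hence equal $\{w,w\omega,w\omega^2\}$ with $\omega=e^{2\pi i/3}$ and $|w|=1$.

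First I would run this geometry against the hypothesis $\ep_1+\ep_2\ne 2$, splitting into the three sign patterns $(\ep_1,\ep_2)=(1,-1),(-1,1),(-1,-1)$. Since $\ep_2=\pm1$ already sits among the three vertices, $w$ is forced to be a cube root of $\pm1$, and in each case $\ze^n$ and $\ze^m$ become primitive cube or sixth roots of unity with prescribed arguments. The key qualitative outcome --- exactly where Ljunggren's sign is corrected --- is that $\eta:=\ze^{d}$ must always be a \emph{primitive sixth} root of unity, never a primitive cube root: in every case at least one of $\ze^n=\eta^N$, $\ze^m=\eta^M$ is a primitive sixth root with the corresponding exponent coprime to $6$, which forces $\eta$ to have order $6$. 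Hence every unit-circle root of $f$ is a root of $\Phi_6(x^{d})$, and conversely verifying $\eta^N+\ep_1\eta^M+\ep_2=0$ for $\eta$ a primitive sixth root gives $\Phi_6(x^{d})\mid f$; as $f=f_cf_n$ with $f_n$ irreducible, this pins down $f_c(x)=\Phi_6(x^{(n,m)})$. Reading off arguments yields the congruence data $(N,M)\bmod 6$: $(1,5)$ or $(5,1)$ in the $(1,-1)$ case, $(2,1)$ or $(4,5)$ in the $(-1,1)$ case, and $(1,2)$ or $(5,4)$ in the $(-1,-1)$ case.

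It remains to translate these congruences through $m=2^a3^bm_1$ and $n-2m=2^p3^qn_1$; write $v_2,v_3$ for the $2$- and $3$-adic valuations. All three cases demand $3\nmid N$ and $3\nmid M$, i.e.\ $v_3(n)=b$; comparing $v_3(2m)=b$ with $v_3(n-2m)=q$ in $n=2m+(n-2m)$ shows $v_3(n)=b$ holds exactly when $q>b$, the cases $q<b$ and $q=b$ both forcing $3\mid M$ or $3\mid N$. Moreover dividing $n=2m+(n-2m)$ by $d$ gives $N\equiv 2M\pmod 3$ once $q>b$, so the residual requirement $N+M\equiv0\pmod 3$ is automatic. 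For the $2$-part the three cases are separated by the parities of $N,M$ (both odd; $N$ even, $M$ odd; $N$ odd, $M$ even), which amount to $v_2(n)=a$, $v_2(n)>a$, $v_2(n)<a$; comparing $v_2(2m)=a+1$ with $v_2(n-2m)=p$ gives the equivalences $v_2(n)=a\iff p=a$, $v_2(n)>a\iff p>a$, $v_2(n)<a\iff p<a$. Substituting the sign patterns then produces case (a) ($\ep_1=-\ep_2=1$, $a=p$) and case (b) ($\ep_1=-1$, $a\ep_2<p\ep_2$).

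I expect the main obstacle to be precisely this last $2$-adic bookkeeping: when $p=a+1$ the two summands of $n=2m+(n-2m)$ share the valuation $a+1$ and cancel, so $v_2(n)$ may jump upward, and one must check that this is consistent with (indeed subsumed by) the inequality $p>a$ rather than breaking the equivalences. The geometric steps are routine once set up; the care lies in handling these valuation ``carries'' uniformly across the sign cases and in confirming that the congruences extracted from the geometry are equivalent to --- not merely implied by --- the stated valuation inequalities, which is what guarantees that $f_c$ is exactly $\Phi_6(x^{(n,m)})$ and nothing more.
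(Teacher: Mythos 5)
Your overall architecture is sound: Tverberg's theorem reduces reducibility to the existence of a unit-circle root, the three-unit-vectors argument correctly forces $\eta=\ze^{d}$ (with $d=(n,m)$) to be a \emph{primitive sixth} root of unity in all three sign cases, the mod-$6$ residues of $N=n/d$, $M=m/d$ that you extract are all correct, and your $2$-adic bookkeeping is right, including the carry at $p=a+1$, which is indeed subsumed by $p>a$. The genuine gap is in the $3$-adic step. Your claim ``$v_3(n)=b$ holds exactly when $q>b$, the cases $q<b$ and $q=b$ both forcing $3\mid M$ or $3\mid N$'' is false: when $q=b$ the two summands in $n=2m+(n-2m)$ have equal $3$-adic valuation but need not produce a carry, so $v_3(n)=b$ can happen with $q=b$. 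Concretely, $m=1$, $n=4$ gives $a=b=q=0$, $p=1$, $d=1$, $N=4$, $M=1$: here $3\nmid NM$ and $v_3(n)=b$, yet $q=b$. Consequently your chain ``reducible $\Rightarrow 3\nmid NM \iff v_3(n)=b \Rightarrow q>b$'' breaks at the last implication, and the necessity of $q>b$ --- the central assertion of the theorem --- is not established by your argument.

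The repair uses exactly the condition you discarded as ``automatic''. What the residue pairs $(1,5),(5,1),(2,1),(4,5),(1,2),(5,4)$ give you beyond $3\nmid NM$ is $N+M\equiv 0\pmod{3}$, and it is this congruence, not $3\nmid NM$, that pins down $q>b$. Indeed, since $(N,M)=1$, $3\mid N+M$ forces $3\nmid NM$, hence $v_3(n)=v_3(m)=v_3(d)=b$; then $N+M\equiv N-2M\pmod{3}$ and $N-2M=(n-2m)/d$, so $3\mid (n-2m)/d$, i.e.\ $q=v_3(n-2m)\ge v_3(d)+1=b+1$. Conversely, $q>b$ gives $v_3(d)=b$ and $3\mid(n-2m)/d$, recovering $N+M\equiv 0\pmod 3$; so the correct equivalence is ``$3\mid N+M\iff q>b$''. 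In your write-up this implication runs the wrong way: you derive $N+M\equiv 0\pmod 3$ \emph{from} $q>b$, which is circular when $q>b$ is the thing being proved. A secondary, much smaller gloss: to conclude $f_c(x)=\Phi_6(x^{(n,m)})$ exactly (with multiplicity one) you should also note that $f$ has no repeated root on the unit circle, which follows since a repeated root $\ze$ would satisfy $|\ze|^{n-m}=m/n\ne 1$. With these two points fixed, your proof goes through.
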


Schinzel \cite{AS3} studied the polynomials of the form $x^n-2x^m+1$. If $n=2m,$ then $x^n-2x^m+1=(x^m-1)^2$ is a product of cyclotomic polynomials. For $n\ne 2m$, he proved that 
\begin{theorem}[Schinzel \cite{AS3}]\label{schinzelas3}
Let $n\ne 2m$. The polynomial $x^n-2x^m+1$ can be factored as $(x^{(n,m)}-1)g(x)$ where $g(x)\in \Z[x]$ is an irreducible non-reciprocal polynomial except in the following cases:
\begin{align}
x^{7k}-2x^{2k}+1=(x^k-1)(x^{3k}+x^{2k}+1)(x^{3k}+x^k+1); \notag\\
x^{7k}-2x^{5k}+1=(x^k-1)(x^{3k}+x^{2k}+1)(x^{3k}-x^k-1),\notag
\end{align}
for every $k\ge 1$. 
\end{theorem}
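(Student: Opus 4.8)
The plan is to first strip off the forced cyclotomic factor and then attack what remains with Ljunggren's Euclidean-norm method, which is well suited here because $\|f\|_2^2 = 1^2 + (-2)^2 + 1^2 = 6$ is small. Writing $d = (n,m)$, every $d$-th root of unity $\zeta$ satisfies $\zeta^n = \zeta^m = 1$, so $f(\zeta)=1-2+1=0$ and hence $(x^d - 1)\mid f(x)$. Conversely, if $|\zeta| = 1$ and $f(\zeta) = 0$, then $2 = 2|\zeta|^m = |\zeta^n + 1| \le |\zeta^n| + 1 = 2$, and equality in the triangle inequality forces $\zeta^n = 1$ and then $\zeta^m = 1$, so $\zeta$ is a $d$-th root of unity. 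Since $f'(\zeta) = \zeta^{-1}(n - 2m) \ne 0$ (this is where $n \ne 2m$ enters), each such root is simple. Thus $f(x) = (x^d - 1)\,g(x)$ with $g \in \Z[x]$ having \emph{no} root on the unit circle, and the cyclotomic part of $f$ is exactly $x^d - 1$.

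Next I would record that $f$, hence $g$, is non-reciprocal. Indeed $\tilde f(x) = x^n - 2x^{n-m} + 1$, which equals $\pm f$ only when $n = 2m$, excluded by hypothesis. As $x^d - 1$ is anti-reciprocal and the reciprocal operation is multiplicative on polynomials with nonzero constant term, a reciprocal $g$ would give $f = \pm\tilde f$, a contradiction; so $g$ is non-reciprocal.

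The heart of the matter is to show $g$ is irreducible apart from the two families. Suppose $f = f_1 f_2$ is a factorization isolating a putative extra factor of $g$. Following Ljunggren, set $K = \tilde f_1 f_2$, so $K\tilde K = f\tilde f$ and, comparing the central coefficients of these degree-$2n$ reciprocal polynomials, $\|K\|_2^2 = \|f\|_2^2 = 6$. An explicit expansion gives
\[
f(x)\,\tilde f(x) = x^{2n} - 2x^{2n-m} - 2x^{n+m} + 6x^n - 2x^m - 2x^{n-m} + 1 ,
\]
a seven-term reciprocal polynomial. On the other hand an integer polynomial with $\|K\|_2^2 = 6$ is extremely sparse: since $6 = 1+1+1+1+1+1 = 4+1+1$ are the only representations as sums of squares with no summand exceeding $4$, the nonzero coefficients of $K$ are either six values $\pm 1$, or a single $\pm 2$ together with two values $\pm 1$. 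I would then match $K\tilde K$ against the displayed septinomial, turning the problem into a system of equations in the exponents $n,m$ and the support of $K$; solving it should show that the only $K \notin \{\pm f,\pm\tilde f\}$ arise when $n = 7d$ and $m \in \{2d, 5d\}$, which by direct expansion reproduce the two exceptional factorizations (the two being interchanged by $x\mapsto$ reciprocal). In all other cases $K \in \{\pm f,\pm\tilde f\}$, meaning $g$ has no nontrivial factorization.

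I expect the combinatorial exponent-matching of the final step to be the main obstacle: one must prove that the convolution $K\tilde K$ can reproduce the precise seven-term support, together with the central coefficient $6$, only in the two listed configurations, and must rule out spurious contributions from possible reciprocal factors of $g$ (whose roots would pair as $\alpha,1/\alpha$ off the unit circle) and from the interaction with the anti-reciprocal factor $x^d - 1$. Organising this finite case analysis cleanly, rather than the routine root-location and non-reciprocity steps, is where the real work lies.
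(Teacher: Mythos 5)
Your preliminary steps are correct and essentially forced: the unit-circle roots of $f(x)=x^n-2x^m+1$ are exactly the $d$-th roots of unity, $d=(n,m)$; they are simple because $f'(\zeta)=\zeta^{-1}(n-2m)\ne 0$; and $g=f/(x^d-1)$ is non-reciprocal by multiplicativity of the reciprocal operation. The Ljunggren set-up ($K=\tilde f_1f_2$, $K\tilde K=f\tilde f$, $\|K\|_2^2=\|f\|_2^2=6$) is also the right tool, and indeed the same circle of ideas used in the original literature (note that this survey states Schinzel's theorem without proof, so your route can only be judged on its own merits). But the proposal has a genuine gap exactly where you write ``solving it should show'': the finite case analysis matching possible supports of $K$ (six coefficients $\pm1$, or one $\pm2$ and two $\pm1$'s) against the seven-term polynomial $x^{2n}-2x^{2n-m}-2x^{n+m}+6x^n-2x^{n-m}-2x^m+1$ is not a routine afterthought --- it \emph{is} the theorem. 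Both the emergence of exceptional factorizations precisely at $n=7d$, $m\in\{2d,5d\}$ and the non-existence of any others live entirely inside that analysis, and you neither carry it out nor organize its cases. As written, you have reduced the statement to an unproved combinatorial assertion.

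Two further points. First, even granting the classification of the $K$'s, the inference ``$K\in\{\pm f,\pm\tilde f\}$ for every factorization $\Rightarrow g$ irreducible'' requires the lemma that every reciprocal factor of $f$ has all its roots on the unit circle, hence divides $x^d-1$ and cannot divide $g$; you gesture at this but never prove it. (The short argument: a common root $\alpha$ of $f$ and $\tilde f$ is a root of $f-\tilde f=2x^m(x^{n-2m}-1)$, so $\alpha^{n-2m}=1$ and $|\alpha|=1$.) Second, had you actually performed the ``direct expansion'' you invoke, you would have caught that the first exceptional factorization as printed in this survey is wrong: $(x^k-1)(x^{3k}+x^{2k}+1)(x^{3k}+x^k+1)$ expands to $x^{7k}+2x^{4k}-2x^{3k}-1$, and the correct middle factor is $x^{3k}+x^{2k}-1$, as in the Bremner and Filaseta et al.\ theorems quoted elsewhere in the same paper. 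That discrepancy is exactly the kind of thing the unexecuted final step of your plan is responsible for detecting.
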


If $n=2m,$ then $x^n+2x^m+1=(x^m+1)^2$ is a product of cyclotomic polynomials. On the other hand, if we replace $x$ by $-x$ in Theorem \ref{schinzelas3}, then one can conclude the following. 

{\em If $n$ is even, $m$ is odd, then $x^n+2x^m+1$ can be factored as $(x^{(n,m)}+1)g(x)$ where $g(x)$ is an irreducible non-reciprocal polynomial. }

Similarly, partial results can be drawn for $x^n+2x^m-1$ and $x^n-2x^m-1$ by replacing $x$ with $-x$ in  Theorem \ref{schinzelas3}. Recently Filaseta et al. \cite{filaseta3} gave a complete characterization of the irreducibility of $x^n+2\ep_1x^m+\ep_2$. 

\begin{theorem}[Filaseta et al. \cite{filaseta3}]\label{filasetatrinomial}
Let $n>m>0$ and $f(x)=x^n+2\ep_1x^m+\ep_2$. Then the non-cyclotomic part of $f(x)$ is reducible in the following cases:
\begin{align}
    & x^{7k}-2x^{2k}+1=(x^k-1)(x^{3k}+x^{2k}-1)(x^{3k}+x^k+1);\notag\\
    & x^{7k}-2x^{5k}+1=(x^k-1)(x^{3k}+x^{2k}+1)(x^{3k}-x^k-1);\notag\\
    & x^{7k}+2x^{2k}-1=(x^k+1)(x^{3k}-x^{2k}+1)(x^{3k}+x^k-1);\notag\\
    & x^{7k}-2x^{5k}-1=(x^k+1)(x^{3k}-x^k+1)(x^{3k}-x^k-1);\notag\\
    & x^{7k}+2x^{3k}+1=(x^{3k}-x^{2k}+1)(x^{4k}+x^{3k}+x^{2k}+1);\notag\\
    & x^{7k}+2x^{4k}+1=(x^{3k}-x^{k}+1)(x^{4k}+x^{2k}+x^k+1);\notag\\
    & x^{7k}+2x^{3k}-1=(x^{3k}+x^{2k}-1)(x^{4k}-x^{3k}+x^{2k}+1);\notag\\
    & x^{7k}-2x^{4k}-1=(x^{3k}-x^k-1)(x^{4k}+x^{2k}-x^k+1),\notag
\end{align}
for every $k\ge 1.$
\end{theorem}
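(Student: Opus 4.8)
The plan is to prove the statement by verifying the eight displayed factorizations and then correctly separating cyclotomic from non-cyclotomic factors; since the statement only asserts that the non-cyclotomic part is reducible in these cases, the entire task is to confirm that each identity holds and that, after the cyclotomic factors are removed, what remains is a nontrivial product. First I would reduce every identity to the case $k=1$: on both sides of each displayed equation every exponent is a multiple of $k$, so the general identity follows from the $k=1$ identity under the substitution $x\mapsto x^k$. It then suffices to expand eight products for $k=1$ and compare with $x^7+2\ep_1x^m+\ep_2$; for example $(x^3-x^2+1)(x^4+x^3+x^2+1)=x^7+2x^3+1$ is checked directly, and the other seven are identical routine expansions.

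Next I would apportion the cyclotomic content. In the first four identities the factor $x^k-1$ or $x^k+1$ occurs, and both are products of cyclotomic polynomials via $x^k-1=\prod_{d\mid k}\Phi_d(x)$ and $x^k+1=\prod_{d\mid 2k,\,d\nmid k}\Phi_d(x)$. Every other factor across the eight identities has the form $g(x^k)$ with $g$ one of the cubics $y^3\pm y^2\pm1$, $y^3\pm y\pm1$ or one of the quartics $y^4+y^3+y^2+1$, $y^4+y^2+y+1$, $y^4-y^3+y^2+1$, $y^4+y^2-y+1$. Such a factor $g(x^k)$ carries no cyclotomic factor exactly when $g$ has no root on the unit circle: a root of unity $\zeta$ has $|\zeta^k|=1$, so any cyclotomic factor of $g(x^k)$ would force a root of $g$ on $|y|=1$. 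Hence the task reduces to proving that none of these cubics or quartics has a unit-circle root.

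For the cubics this is immediate: each is monic with constant term $\pm1$, so the product of its three roots has modulus $1$, and any unit-circle root is either $\pm1$ or lies in a nonreal conjugate pair on the circle, which, having modulus product $1$, forces the remaining real root onto the circle as well, hence to $\pm1$; since $g(1)\ne0$ and $g(-1)\ne0$ for all eight cubics, no unit-circle root can occur. For the quartics I would instead use that each is monic with constant term $1$, non-reciprocal, and irreducible over $\Q$ — the latter because it has no rational root and the ansatz $(y^2+ay\pm1)(y^2+cy\pm1)$ leads to a contradiction in the linear coefficients. An irreducible non-reciprocal polynomial has no nonreal unit-circle root, since such a root $\alpha$ would give $\bar\alpha=1/\alpha$ as a root too, making $\alpha$ a common root of $g$ and its reciprocal $\tilde g$ and hence $g=\pm\tilde g$, contradicting non-reciprocity; the real possibilities $\pm1$ are again excluded by $g(\pm1)\ne0$.

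Combining these, in each case the cyclotomic part of $f$ is precisely the factor $x^k\mp1$ (first four identities) or is trivial (last four), while the non-cyclotomic part is the product of the two factors of the form $g(x^k)$, each of positive degree; this product is visibly reducible, which is exactly the assertion. The only real obstacle is the unit-circle analysis of the auxiliary polynomials: the cubic case collapses to the two evaluations $g(\pm1)$ once the modulus bookkeeping is noted, whereas the quartic case rests on the short irreducibility-plus-non-reciprocity argument, and this is where essentially all of the substance of the proof resides.
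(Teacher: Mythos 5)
You should note first that the survey contains no proof of this theorem: it is quoted from Filaseta et al.\ \cite{filaseta3}, so your attempt can only be judged against the statement and its intended meaning, not against an argument in the paper. The half you address is handled correctly. Reducing to $k=1$ via $x\mapsto x^k$ is legitimate for verifying the eight identities; $x^k-1=\prod_{d\mid k}\Phi_d(x)$ and $x^k+1=\prod_{d\mid 2k,\, d\nmid k}\Phi_d(x)$ dispose of the linear-in-$x^k$ factors; and your two devices for showing that the remaining factors $g(x^k)$ carry no cyclotomic factor are both sound: for the cubics, a nonreal unit-circle root of a real monic cubic with constant term $\pm 1$ forces the third (real) root to lie at $\pm 1$, and all the evaluations $g(\pm 1)$ are nonzero; for the quartics, your linear-coefficient contradiction does give irreducibility over $\Q$, and an irreducible non-reciprocal polynomial with $g(\pm 1)\ne 0$ can have no root on the unit circle. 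So in each listed family the non-cyclotomic part is a product of two polynomials of positive degree, hence reducible.

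The genuine gap is that this is only the trivial direction, and it is not what the theorem is cited for. The survey introduces the result with the words ``Filaseta et al.\ gave a \emph{complete characterization} of the irreducibility of $x^n+2\ep_1x^m+\ep_2$,'' and---exactly as in Theorems \ref{schinzelas3} and \ref{jonassen}, which are phrased ``except in the following cases''---the list is meant to be exhaustive: for all $n>m>0$ and all sign choices \emph{outside} these eight families, the non-cyclotomic part of $x^n+2\ep_1x^m+\ep_2$ is irreducible or identically $1$. That converse is the entire substance of the cited theorem (it is what completes Schinzel's Theorem \ref{schinzelas3} to all four sign patterns), and your proposal does not touch it; indeed, the fact that your reading turns a cited research result into a routine finite verification should itself have been a warning. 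The converse also cannot be reached by your methods: checking identities and locating unit-circle roots of finitely many auxiliary polynomials says nothing about the infinitely many trinomials not on the list, and ruling out any further reducible cases requires the Ljunggren--Schinzel machinery for non-reciprocal factors (comparing factorizations of $f(x)\tilde{f}(x)$, Euclidean-norm bounds, and a determination of the cyclotomic part as in Theorem \ref{schinzeltrinomialcyclo}). As it stands, your attempt verifies the examples but does not prove the characterization.
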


\item $\boldsymbol{b\ge 3}$

Let $f(x)=x^n+b\ep_1x^m+\ep_2$ be a polynomial of degree $n\ge 2m$ and  let $b\ge 3,\ep_1,\ep_2\in \{\, -1, +1\,\}$. Recall from the subsection~\ref{subsec:m=1} that these polynomials are irreducible when $m=1$.  Minkusinski and Schinzel \cite{JS} considered trinomials of the form $x^n+p\ep_1x^m+\ep_2,$ where $p$ is an odd prime. They proved that there are only finitely many such trinomials that are reducible.

\begin{theorem}[Minkusinski \& Schinzel \cite{JS}]\label{minkusinski}
If $p$ is an odd prime, then there are only a finite number of ratios $\frac{n}{m}$ for which $f(x)=x^n+p\ep_1x^m+\ep_2$ is reducible. 
\end{theorem}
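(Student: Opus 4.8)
\emph{Proof proposal.} Write $f(x)=x^n+p\ep_1x^m+\ep_2$ with $n\ge 2m$, and note that the ratio in question is $n/m=n_1/m_1$ in lowest terms, where $d=(n,m)$, $n=dn_1$ and $m=dm_1$. The first point is that, unlike the cases $b\le 2$, here there are no cyclotomic factors at all to manufacture infinitely many reducible ratios: if $\alpha$ were a root of $f$ with $|\alpha|=1$, then $p=|p\ep_1\alpha^m|=|\alpha^n+\ep_2|\le 2$, impossible for $p\ge 3$. Hence $f$ has no root on the unit circle, no root of unity as a zero, and (since $|f(\pm 1)|\ge p-2>0$) neither $1$ nor $-1$ is a root. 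I would then set aside the single ratio $n/m=2$: when $n=2m$ we have $f=Q(x^m)$ with $Q(y)=y^2+p\ep_1y+\ep_2$, whose discriminant $p^2-4\ep_2$ lies strictly between two consecutive squares and so is never a perfect square; thus $Q$ is irreducible and any reducibility of $f$ comes only from the substitution $y\mapsto x^m$, contributing the single ratio $2$. From now on assume $n>2m$.

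The key structural input is that $f$ has no self-reciprocal factor of positive degree. Suppose $\beta$ and $1/\beta$ were both roots of $f$. Multiplying $f(1/\beta)=0$ by $\beta^n$ gives $\ep_2\beta^n+p\ep_1\beta^{n-m}+1=0$, and subtracting $\ep_2$ times $f(\beta)=0$ leaves $p\ep_1(\beta^{n-m}-\ep_2\beta^m)=0$, so that $\beta^{n-2m}=\ep_2$. For $n>2m$ this makes $\beta$ a root of unity, contradicting the previous paragraph. Hence no root of $f$ is the reciprocal of another; equivalently $\gcd(f,\tilde f)=1$, and in particular $f$ has no self-reciprocal factor.

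Now I would run Ljunggren's norm argument. Let $f=f_1f_2$ be a nontrivial factorization; since $f(0)=\ep_2=\pm 1$, both factors have leading and constant coefficients $\pm 1$. Put $g=\tilde f_1f_2$, so that $\tilde g=f_1\tilde f_2$ and $g\tilde g=(f_1\tilde f_1)(f_2\tilde f_2)=f\tilde f$. Comparing the coefficient of $x^n$ on both sides and using that $[x^n](h\tilde h)=\|h\|_2^2$ for any $h$ of degree $n$, we get $\|g\|_2^2=\|f\|_2^2=p^2+2$; thus $g$ has at most $p^2+2$ nonzero coefficients, each of absolute value $\le p$, with end coefficients $\pm 1$. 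The polynomial $f\tilde f$ is supported on the seven distinct, increasing exponents $0,m,n-m,n,n+m,2n-m,2n$, so writing $g=\sum_{s\in S}c_sx^s$ the identity $g\tilde g=f\tilde f$ forces $\sum_{a-b=t}c_ac_b=0$ whenever $n+t\notin\{0,m,n-m,n,n+m,2n-m,2n\}$. If $g$ is a trinomial, its middle coefficient is $\pm p$ and matching supports gives $g=f$ or $g=\ep_2\tilde f$, whence $f_1$ or $f_2$ is self-reciprocal, which was excluded above. So $g$ has at least four terms, and the norm $p^2$ must split as $u^2+v^2$ with $u,v\ge 1$; this is possible only for $p\equiv 1\pmod 4$ and then, because $p$ is prime, essentially uniquely, which pins the coefficients and forces a short rigid support, for instance $S=\{0,m,2m,3m\}$ (giving $n=3m$). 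In this way reducibility confines $n/m$ to a finite list.

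The main obstacle is the control of cancellation in $g\tilde g$: terms $c_ac_bx^{n+a-b}$ falling outside the seven-element support need only cancel in groups, so a priori $\operatorname{supp}(g)$ could be long and spread over a range growing with $n/m$. The crux is therefore a rigidity lemma asserting that a polynomial with boundedly many terms and bounded coefficients, whose ``reciprocal square'' $g\tilde g$ is supported on just these seven exponents, cannot have its support spread out; this is where the bounded height coming from $\|g\|_2^2=p^2+2$ must be leveraged. The primality of $p$ (rather than merely $p\ge 3$) enters twice: through reduction modulo $p$, where $\bar f=x^n+\bar\ep_2$ factors in the field $\F_p$ and restricts the reductions of the factors, and through the near-uniqueness of the representation $p^2=u^2+v^2$, which keeps the number of admissible coefficient patterns, and hence of admissible ratios, finite; for a composite middle coefficient these representations proliferate and finiteness can genuinely fail. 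Finally, since neither the norm identity, the reciprocal-root elimination, nor the support count refers to $d=(n,m)$ except through the exponents, the resulting bound on $n/m=n_1/m_1$ is uniform in $d$, completing the reduction to finitely many ratios.
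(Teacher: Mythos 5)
Your setup is sound as far as it goes: the absence of roots on the unit circle (so no cyclotomic factors), the elimination of reciprocal pairs of roots and hence of self-reciprocal factors, the reduction to $n\ge 2m$ with the ratio $n/m=2$ set aside, and the Ljunggren identity $g\tilde g=f\tilde f$ with $\|g\|_2^2=p^2+2$ are all correct. But the proof stops exactly where the theorem begins. The statement you call a ``rigidity lemma'' --- that a polynomial $g$ of bounded norm and bounded coefficients whose product with its reciprocal is supported on the seven exponents $0,m,n-m,n,n+m,2n-m,2n$ can only exist for finitely many ratios $n/m$ --- is not a technical detail to be leveraged afterwards; it \emph{is} the content of the theorem. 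You name it, observe that cancellation ``in groups'' is the obstacle, and then move on without proving it, so the finiteness of the set of ratios is never established. (Note that the paper itself gives no proof to compare against: it is a survey and quotes this result from the source \cite{JS}, so your argument has to stand on its own, and it does not yet.)

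Moreover, the one concrete step you take toward rigidity is wrong. From $\|g\|_2^2=p^2+2$ and end coefficients $\pm1$, the squares of the remaining coefficients of $g$ sum to $p^2$, but there is no reason there should be exactly two of them: $p^2$ is a sum of three or more positive squares for every odd prime (e.g.\ $9=4+4+1$, so for $p=3$ the polynomial $g$ may have five nonzero terms, with middle coefficients $\pm2,\pm2,\pm1$), and at the other extreme $g$ may have up to $p^2+2$ coefficients all equal to $\pm1$. Hence your conclusions that a non-trinomial $g$ forces $p\equiv 1\pmod 4$, and that near-uniqueness of $p^2=u^2+v^2$ ``pins the coefficients and forces a short rigid support'' such as $\{0,m,2m,3m\}$, do not follow; the dichotomy ``trinomial, or exactly two middle coefficients'' omits almost all admissible coefficient patterns, which is precisely where the difficulty lives. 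The appeal to reduction modulo $p$ is likewise only a heuristic as written. To complete the argument one must carry out the actual combinatorial analysis of which supports of $g$ are compatible with $g\tilde g=f\tilde f$ uniformly in $n/m$, and that analysis is absent from your proposal.
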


In 1971,  Schinzel \cite{AS4} asked the following question:\\
{\em Is it possible to find a reducible trinomial of the form $x^n+Ax^m+1$ with $|A|>2, n\ne 2m$ and $n>m>0$?}

Coray answered this question in a letter to Schinzel furnishing the following examples:
\begin{align}
& x^{13}-3x^4+1=(x^3-x^2+1)(x^{10}+x^9+x^8-x^6-\cdots +x^2+1);\notag\\
& x^{13}+3x^7+1=(x^4-x^3+1)(x^9+x^8+x^7+x^6-x^4+x^3+1).\notag
\end{align}

A. Bremner \cite{bremner} considered the trinomials $x^n+Ax^m+1$ with $|A|>2$ and proved more than what Schinzel asked. He showed that there are only finitely many trinomials with an irreducible cubic factor. More precisely, 

\begin{theorem}[Bremner \cite{bremner}]
Let $n\ge 2m>0$ and the trinomial $x^n+Ax^m+1$ has an irreducible factor of degree three, $0\ne A\in \Z$. If $n>3$, then the only possibilities are the following:
\begin{align}
& x^4+2\ep x+1=(x+\ep)(x^3-\ep x^2+x+\ep); \notag\\
& x^5+x+1=(x^2+x+1)(x^3-x^2+1);\notag\\
& x^7-2x^2+1=(x-1)(x^3+x^2-1)(x^3+x+1);\notag\\
& x^7+2x^3+1=(x^3-x^2+1)(x^4+x^3+x^2+1);\notag\\
& x^{13}-3x^4+1=(x^3-x^2+1)(x^{10}+x^9+x^8-x^6-2x^5-2x^4-x^3+x^2+1);\notag\\
& x^{33}+67x^{11}+1=(x^3+x+1)(x^{30}-x^{28}-x^{27}+x^{26}+2x^{25}-\cdots-2x^3+x^2-x+1).\notag
\end{align} 
\end{theorem}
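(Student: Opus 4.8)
The plan is to exploit the fact that any monic integer polynomial dividing $x^n+Ax^m+1$ has constant term $\pm 1$, so the cubic factor may be taken as $g(x)=x^3+ax^2+bx+\epsilon$ with $\epsilon\in\{-1,+1\}$ and $a,b\in\Z$, irreducible over $\Q$. Writing $\alpha,\beta,\gamma$ for its roots, we have $\alpha\beta\gamma=-\epsilon$, whence $|\alpha\beta\gamma|=1$. The hypothesis $n\ge 2m$ reflects a reciprocal symmetry: replacing $x$ by $1/x$ (passing to $\tilde f$) interchanges $m$ with $n-m$ and $g$ with its reciprocal cubic, so one normalizes to $m\le n-m$. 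Each root $\rho$ of $g$ satisfies the trinomial relation $\rho^n+A\rho^m+1=0$, which I read geometrically: the three points $(\rho^m,\rho^n)$, as $\rho$ ranges over $\{\alpha,\beta,\gamma\}$, are collinear, lying on the line $Y=-AX-1$. Collinearity is the vanishing determinant
\begin{equation*}
\det\begin{pmatrix} \alpha^n & \alpha^m & 1\\ \beta^n & \beta^m & 1\\ \gamma^n & \gamma^m & 1\end{pmatrix}=0,
\end{equation*}
while the common slope forces $-(\rho_i^n-\rho_j^n)/(\rho_i^m-\rho_j^m)$ to equal the same integer $A$ for every pair $\rho_i,\rho_j$ of distinct roots.

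First I would analyze the moduli of the roots. Since no cyclotomic polynomial has degree $3$ (there is no $k$ with Euler's totient $\phi(k)=3$), an irreducible cubic cannot have all of its roots on the unit circle; combined with $|\alpha\beta\gamma|=1$ this yields a root of modulus $>1$ and a root of modulus $<1$. Labelling $|\gamma|<1<|\alpha|$ and feeding these into the trinomial relation gives two asymptotic constraints. From the small root, $\gamma^n\to 0$ forces $A\gamma^m\to -1$, hence $\log|A|=m\log(1/|\gamma|)+O(1)$; from the large root, $|\alpha|^n=|A|\,|\alpha|^m(1+o(1))$ once $|A|\,|\alpha|^m$ is large, hence $(n-m)\log|\alpha|=\log|A|+O(1)$. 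Together with $\log|\alpha|+\log|\beta|+\log|\gamma|=0$, these pin down the ratios among $n$, $m$ and the root sizes up to bounded error, and in particular show that $|A|$ must grow with $m$ unless $m$ is small.

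The decisive step is to upgrade these approximate relations to an absolute bound on $n$, after which only finitely many tuples $(a,b,\epsilon,A,n,m)$ survive and the six families are extracted by direct factorization. Here I would split into the two root configurations. When $g$ has one real and two complex-conjugate roots, the common slope $A$ is a real integer, yet the slope computed from the conjugate pair is generically non-real; forcing its argument to vanish imposes a rigid phase condition on $\arg\beta$ that, away from finitely many exceptions, is incompatible with $\arg\beta$ being essentially frozen as $n$ varies. When $g$ has three real roots, I would instead compare the three real slopes directly: they carry different dominant growth rates in $n$ (governed by the largest root $\alpha$), so their coincidence can hold only for bounded $n$, the argument being a careful sign-and-size bookkeeping exploiting $|\gamma|<1<|\alpha|$ and $\epsilon=\pm1$.

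I expect the main obstacle to be exactly this last upgrade: the estimates $A\gamma^m\approx -1$ and $|\alpha|^{\,n-m}\approx|A|$ become near-equalities precisely when $|A|\,|\gamma|^m$ is extremely close to $1$, and excluding such near-degenerate configurations is, in effect, a statement about linear forms in logarithms. The elegant feature of the cubic case — and the reason an elementary treatment can succeed where the general trinomial problem needs heavier machinery — is that the rigidity of requiring a single integer slope $A$ to be shared by all three pairs of conjugate roots (the phase condition in the complex case, the growth-rate comparison in the real case) substitutes for transcendence theory and yields an explicit bound on $n$. Once $n$ is bounded, enumerating the finitely many irreducible cubics with $|a|,|b|$ controlled by the root-size estimates and testing divisibility of $x^n+Ax^m+1$ produces precisely the listed trinomials.
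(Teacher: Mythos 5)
Note first that the paper you were given offers no proof to compare against: it is a survey, and this theorem is quoted verbatim from Bremner's article. Judged on its own terms, your proposal gets the routine opening right --- a cubic factor is monic with constant term $\pm 1$; Kronecker's theorem together with the nonexistence of a cyclotomic polynomial of degree $3$ forces a root inside and a root outside the unit circle; and the size relations $|A|\,|\gamma|^m\approx 1$ and $|\alpha|^{n-m}\approx|A|$ follow. But the step you yourself call decisive is not merely unfinished, it is absent, and one of the two mechanisms you offer for it rests on a false premise. For the conjugate pair $(\beta,\bar\beta)$ the slope is
\[
\frac{\beta^n-\bar\beta^n}{\beta^m-\bar\beta^m}
=\frac{\operatorname{Im}(\beta^n)}{\operatorname{Im}(\beta^m)},
\]
which is \emph{automatically} real, so no ``phase condition'' whatsoever arises from that pair. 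A genuine condition does come from a mixed pair (the real root with a complex root), whose slope is generically non-real; but then showing that $n\arg\beta \pmod{2\pi}$ can satisfy the resulting constraint only finitely often is exactly a Diophantine-approximation problem about an angle of an algebraic number --- the linear-forms-in-logarithms territory you explicitly claim the cubic case allows you to bypass. Nothing in the sketch replaces it.

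The totally real case has the same defect: ``careful sign-and-size bookkeeping'' is a hope, not an argument. Your approximate relations are mutually consistent for infinitely many triples $(n,m,A)$ with a fixed cubic (let $m$ grow and take $A$ near $-1/\gamma^m$), so size comparisons alone can never bound $n$; what must be excluded is that the \emph{exact} equations $\theta_i^n+A\theta_i^m+1=0$ hold simultaneously at all three roots, which is an exponential Diophantine problem in the unit $\theta$ of the cubic field $\Q(\theta)$. That is where the entire content of Bremner's theorem lies, and it is handled in the cited source by algebraic-number-theoretic analysis in $\Q(\theta)$, not by elementary rigidity. Finally, even granting a bound on $n$, your concluding enumeration needs explicit bounds on $a$, $b$, $m$ and $A$ --- none are derived --- and the presence of $x^{33}+67x^{11}+1$ in the list shows such bounds cannot be small, so ``direct factorization'' of the survivors is itself not a throwaway step.
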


If $n$ is odd, then by changing the variable $x$ by $-x$, the polynomial  $x^n+Ax^m-1$ has irreducible cubic factors in the above cases only. If $n$ is even, Lutczyk \cite{AS4} provided the following example, 
\begin{equation*}
x^8+3x^3-1=(x^3+x-1)(x^5-x^3+x^2+x+1).
\end{equation*}
Bremner \cite{bremner} also gave several examples when $n$ is even. For example, 
\begin{equation*}
x^6+(4b^4-4b)x^2-1=(x^3+2bx^2+2b^2x+1)(x^3-2bx^2+2b^2x-1),
\end{equation*}
where $b\ne 0,1, b\in \Z$ is an infinite family of polynomials having a cubic factor. By Perron's criterion, $x^3+2b^2\ep_1x^2+2bx+\ep_2$ is  irreducible when $b\ne 0, 1$ and hence so is its reciprocal.  However, it is not known whether there are only finitely many such  trinomials of the form $x^n+Ax^m-1$ having a cubic factor when $n$ is even. We, therefore, ask the following question, 

\begin{qu}
Let $n,m,A\in \N$, $n\ge 2m,$ $A> 2$ and let $n$ be an even integer.  Are there finitely many trinomials of the form $x^n\pm Ax^m-1$ which have an irreducible cubic factor? 
\end{qu}

\end{enumerate}

\subsection{Reducibility of $\boldsymbol{x^n+b\ep_1x^m+c\ep_2, \, c>1}$}
Suppose $f(x)=x^n+b\ep_1x^m+c\ep_2$ is a trinomial of degree $n$, where $b,c\in \N, c>1,\ep_1,\ep_2\in \{\, -1, +1\,\}$. Unlike in the case of $x^n+b\ep_1x^m+\ep_2$, here $\ep_2 \tilde{f}(x)$ is not of the form of $f(x)$. So, we have to consider the reducibility of $f(x)$ for $1\le m\le n-1$. Nagell \cite{nagell} gave the following irreducibility criterion for trinomials with a large middle coefficient.  

\begin{theorem}[Nagell \cite{nagell}]\label{nagell}
Let $b,c\in \N$ and let $f(x)=x^n+b\ep_1x^m+c\ep_2$ be a polynomial of degree $n$. Then $f(x)$ is irreducible if 
\begin{enumerate}[label=(\alph*)]
\item $b>1+c^{n-1}$ and
\item if $d|n, d>1,$ then $c$ is not a $d^{\text{th}}$ power. In particular, $c>1$. 
\end{enumerate}
\end{theorem}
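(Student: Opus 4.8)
The plan is to argue by contradiction. Suppose $f(x)=x^n+b\ep_1x^m+c\ep_2$ satisfies (a) and (b) but factors nontrivially; since $f$ is monic of content $1$, I may write $f=gh$ with $g,h\in\Z[x]$ monic and $1\le\deg g\le n-1$. Setting $c_1=|g(0)|$ and $c_2=|h(0)|$, the relation $g(0)h(0)=c\ep_2$ gives $c_1c_2=c$ with $c_1,c_2$ positive integers, so each factor has constant term of absolute value at most $c$. First I would locate the roots. On the circle $|x|=1$ one has $|b\ep_1x^m|=b$ while $|x^n+c\ep_2|\le 1+c$, and (a) gives $b>1+c^{n-1}\ge 1+c$; hence Rouch\'{e}'s theorem shows $f$ has exactly $m$ zeros in $|x|<1$ (as many as $b\ep_1x^m$) and none on $|x|=1$. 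Thus the $n$ roots split into $m$ ``small'' roots inside the unit disc and $n-m$ ``large'' roots outside it.

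Next I would make the separation quantitative. From $\beta^{\,n-m}=-b\ep_1-c\ep_2\beta^{-m}$ for a large root and $\alpha^m=-(c\ep_2+\alpha^n)/(b\ep_1)$ for a small root, using $|\alpha|<1<|\beta|$, one obtains $(b-c)^{1/(n-m)}<|\beta|<(b+c)^{1/(n-m)}$ and $((c-1)/b)^{1/m}<|\alpha|<((c+1)/b)^{1/m}<1$. For any irreducible factor $g_i$ of $f$, say with $s_i$ small and $t_i$ large roots, its constant term has absolute value $c_0^{(i)}=\prod_\rho|\rho|$, the product over the roots $\rho$ of $g_i$, a positive integer at most $c$. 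A factor with no large root would give $c_0^{(i)}<1$, which is impossible; and the sharp hypothesis $b>1+c^{n-1}$ is exactly what is needed so that the lower bound on $|\beta|$ makes any ``large-heavy'' factor, one with $t_i/(n-m)>s_i/m$, satisfy $c_0^{(i)}>c$, again impossible. Since $\sum_i t_i/(n-m)=1=\sum_i s_i/m$, ruling out large-heavy factors forces the balance $t_i/(n-m)=s_i/m=:\lambda_i\in(0,1)$ for every factor.

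With the balance in hand, the estimates pin the constant-term size into the narrow interval $c_0^{(i)}\in\bigl((c-1)^{\lambda_i}((b-c)/b)^{\lambda_i},\,(c+1)^{\lambda_i}((b+c)/b)^{\lambda_i}\bigr)$, which, because $b$ is enormous, lies arbitrarily close to $\bigl((c-1)^{\lambda_i},(c+1)^{\lambda_i}\bigr)$ and so forces $c_0^{(i)}$ to be the unique integer near $c^{\lambda_i}$. Writing $\lambda_i=p_i/q_i$ in lowest terms, the denominator satisfies $q_i\mid\gcd(m,n-m)=\gcd(m,n)$ and $q_i\ge 2$ (since $0<\lambda_i<1$); integrality of $c_0^{(i)}$ then forces $c$ to be a perfect $q_i$-th power. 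As $q_i\mid n$ and $q_i>1$, this contradicts hypothesis (b), and therefore $f$ is irreducible.

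The step I expect to be the crux is the passage from the interval bound to an exact perfect-power identity $c_0^{(i)}=c^{\lambda_i}$. For $\lambda_i=1/2$ it is clean: an integer $k$ with $(c-1)^{1/2}<k<(c+1)^{1/2}$ forces $k^2=c$, so $c$ is a square. For a general ratio $p_i/q_i$ the limiting interval $\bigl((c-1)^{p_i/q_i},(c+1)^{p_i/q_i}\bigr)$ can a priori contain several integers, so one must use the two descriptions $\lambda_i=s_i/m=t_i/(n-m)$ together with $\prod_i c_0^{(i)}=c$ and $\sum_i\lambda_i=1$ to eliminate the spurious candidates; this is precisely where the full strength of $b>1+c^{n-1}$, which shrinks each interval below unit length, must be exploited. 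I would also dispose of the small-degree and $m=1$ boundary cases directly, as in the quadratic instance $x^2+b\ep_1x+c\ep_2$, where for integer roots $r_1,r_2$ the elementary inequality $|r_1|+|r_2|\le|r_1r_2|+1=c+1<b$ already contradicts $|r_1+r_2|=b$.
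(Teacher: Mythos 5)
The survey states Nagell's theorem as a cited result and contains no proof of it, so your proposal must stand on its own merits; it does not, because the step that carries all the weight is false as claimed. You assert that $b>1+c^{n-1}$ ``is exactly what is needed'' so that every large-heavy factor (one with $t_i/(n-m)>s_i/m$) has constant term exceeding $c$. Quantify it: with $|\alpha|\approx (c/b)^{1/m}$ and $|\beta|\approx b^{1/(n-m)}$, a factor with $s_i$ small and $t_i$ large roots has $c_0^{(i)}\approx c^{s_i/m}b^{\delta_i}$, where $\delta_i=t_i/(n-m)-s_i/m=D_i/(m(n-m))$ and $D_i=t_im-s_i(n-m)\ge 1$. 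Ruling it out (equivalently, forcing the complementary factor's constant term below $1$) requires $b\gtrsim c^{(m-s_i)(n-m)/D_i}$, and this exponent can far exceed $n-1$. Take $n=9$, $m=5$ and a quadratic factor with one small and one large root: then $D_i=1$, $\delta_i=1/20$, and you need $b>c^{16}$, while the hypothesis gives only $b>1+c^{8}$. Worse, the gap is not repairable inside your framework: for $c=2$, $b=2^{16}$ (which satisfies (a) and (b)), the putative quadratic factor has constant term $\approx 2^{1/5}\cdot(2^{16})^{1/20}=2$ and the complementary septic $\approx 1$, so integrality, divisibility of $c$, $\prod_i c_0^{(i)}=c$ and your $\lambda$-bookkeeping are all satisfied. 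No estimate on moduli of roots, however sharp, can exclude this splitting of $x^9+2^{16}x^5+2$ (it is irreducible, e.g.\ by Eisenstein, but your method cannot see it); Nagell's hypothesis has to be exploited through arithmetic information beyond absolute values of roots and of constant terms.

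The balanced case, which you yourself flag as the crux, is also not closed by what you propose, though this half is fixable. Your one-shot bounds $\bigl(((c-1)/b)^{s_i/m}(b-c)^{t_i/(n-m)},\,((c+1)/b)^{s_i/m}(b+c)^{t_i/(n-m)}\bigr)$ do not shrink below unit length for small $c$, and containing an integer does not make that integer equal $c^{\lambda_i}$: for $c=2$, $n=10$, $m=5$, the assignment $\lambda_1=4/5$, $c_0^{(1)}=2$ and $\lambda_2=1/5$, $c_0^{(2)}=1$ passes every side condition you list ($\prod_i c_0^{(i)}=c$, $\sum_i\lambda_i=1$), yet $2\neq 2^{4/5}$, so hypothesis (b) is never contradicted. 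To repair it one needs bootstrapped estimates, e.g.\ $|\alpha|^m=(c/b)\bigl(1+O((c/b)^{(n-m)/m})\bigr)$ for small roots and the analogue for large roots, so that $c_0^{(i)}=c^{\lambda_i}(1+O(\varepsilon))$ with $\varepsilon$ made small by $b>1+c^{n-1}$, combined with the elementary gap $|c^{p/q}-N|\ge \bigl(q(c+1)^{q-1}\bigr)^{-1}$ valid whenever $c^{p/q}\notin\Z$; only then does integrality force $c^{\lambda_i}\in\Z$, hence $c$ a $q_i$-th power with $q_i\mid\gcd(m,n-m)$, contradicting (b). You have neither the sharp bounds nor the integrality gap, and in any case this repair does not touch the fatal unbalanced-case gap above.
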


Nagell's criterion is weaker than that of Perron's criterion in the case of $m=n-1$. If $m=n-1,$ then the second condition of Nagell's theorem becomes redundant.

\subsubsection{Reducibility of $\boldsymbol{x^n+b\ep_1x^{n-1}+c\ep_2,\, c>1}$}

 J. Harrington \cite{harrington} asked the values of $n$ and $b$ for which   $$f(x)=x^n+b(x^{n-1}+x^{n-2}+\cdots+x+1)$$ 
 is irreducible. If $b=1,$ then $f(x)$ is a product of cyclotomic polynomials. For $b=-1$, one can see from Theorem \ref{schinzelas3} and  

\begin{equation*}
x^n-2x^{n-1}+1=(x-1)(x^{n-1}-x^{n-2}-\cdots-x-1),
\end{equation*}
that $f(x)$ is irreducible. However, the answer is not known for $|b|\ge 2$. Harrington studied the trinomials of the form $x^n+b\ep_1x^{n-1}+c\ep_2$  to answer these questions and proved that 

\begin{theorem}[Harrington \cite{harrington}]\label{harrington}
Let $n\ge 3, b,c\in \N, nb\ne 9, b\ne c, b\ge 2$ and $c\le 2(b-1)$. If the trinomial $f(x)=x^n+b\ep_1 x^{n-1}+c\ep_2$ is reducible, then $f(x)=(x\pm 1)f_n(x),$ where $f_n(x)\in \Z[x]$ is a non-reciprocal irreducible polynomial. 
\end{theorem}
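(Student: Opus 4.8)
The plan is to show, under the stated hypotheses, two separate things: (i) the only roots of unity that can occur among the roots of $f$ are $\pm1$, so the cyclotomic part of $f$ is at most one of $x-1$ or $x+1$; and (ii) after removing such a linear factor, the remaining polynomial $f_n$ is irreducible and non-reciprocal. I would first normalise: replacing $x$ by $-x$ permutes the sign patterns, so only a bounded number of cases in $\ep_1,\ep_2$ need treatment, and I may assume $b\ge 3$ (if $b=2$ then $c\le 2(b-1)$ together with $b\ne c$ forces $c=1$, a case subsumed by the earlier results on $x^n+b\ep_1x^m+\ep_2$).

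For (i), suppose $\zeta$ is a root of $f$ with $|\zeta|=1$. Then $\zeta^n+b\ep_1\zeta^{n-1}+c\ep_2=0$ exhibits three complex numbers of moduli $1,b,c$ summing to zero, so $b-1\le c\le b+1$; since $b\ne c$ this forces $|b-c|=1$, and in that boundary situation the three vectors are collinear, so $\zeta^n$ and $\zeta^{n-1}$ are real multiples of one another, giving $\zeta=\zeta^n/\zeta^{n-1}\in\R$ and hence $\zeta=\pm1$. Thus no $\Phi_d$ with $d\ge 3$ divides $f$. Since $(x-1)^2\mid f$ would require $f'(1)=n+b\ep_1(n-1)=0$, i.e. $b\ep_1=-n/(n-1)\notin\Z$ for $n\ge 3$ (and similarly for $x+1$), no such square divides $f$; and comparing $f(1)=0$ with $f(-1)=0$ yields either $b\ep_1=0$ or a sign contradiction, so $x-1$ and $x+1$ cannot both divide $f$. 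Hence the cyclotomic part of $f$ is trivial or exactly one of $x\pm1$. I would also record that any \emph{integer} root $\alpha$ with $|\alpha|\ge 2$ satisfies $|\alpha|^{n-1}|\alpha+b\ep_1|=c\le 2(b-1)$; as $|\alpha|^{n-1}\ge 4$, this forces $\alpha$ close to $-b\ep_1$, and a short case analysis pins the only solution to the case $nb=9$ (realised by $x^3+3x^2-4=(x-1)(x+2)^2$), which is excluded. So apart from $\pm1$ the polynomial $f$ has no rational root.

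The remaining and main task is to prove that $f_n$ is irreducible. The starting point is a \emph{dominant root}: applying Rouch\'e's theorem on $|z|=2$ and comparing $f$ with $b\ep_1z^{n-1}$, the inequality $2^{n-1}(b-2)>c$ holds exactly when $nb\ne 9$ (it degenerates at $(n,b,c)=(3,3,4)$, matching the double root $z=-2$ above); thus $f$ has a unique root $\rho$ with $|\rho|>2$, it is real and close to $-b\ep_1$, while the other $n-1$ roots cluster near the unit circle with modulus below $2$. As $\rho$ is irrational (by the integer-root analysis), were $f_n$ reducible one could write $f_n=uv$ with $\deg u,\deg v\ge 2$ and $\rho\in u$; the factor $v$ would then be a monic integer polynomial all of whose roots cluster near $|z|=1$ and none of which is a root of unity.

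The hard part is exactly this last step, since the cluster roots straddle the unit circle (their moduli average $(c/b)^{1/(n-1)}$, which exceeds $1$ once $c>b$), so the naive route ``product of moduli below $2$, hence $|v(0)|=1$, hence $v$ cyclotomic by Kronecker'' is unavailable. To push through I would invoke Ljunggren's device: writing $f=gh$ and $G=\tilde g\,h$ gives $G\tilde G=f\tilde f$, so equating the coefficient of $z^n$ yields $\|G\|_2^2=\|f\|_2^2=1+b^2+c^2$. The bound $c\le 2(b-1)$ is calibrated so that the only representation of $1+b^2+c^2$ as a sum of squares of the integer coefficients of a polynomial $G$ of this constrained shape is the one with nonzero entries $\{1,b,c\}$, which forces one of $g,h$ to be $x\pm1$ and contradicts $\deg u,\deg v\ge 2$. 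I expect the delicate point to be controlling these sum-of-squares representations uniformly in $n$ and discarding the sporadic ones, which is precisely where $b\ne c$ and $nb\ne 9$ are consumed. Finally $f_n$ is non-reciprocal: it carries the single large root $\rho$ but not $1/\rho$, whose modulus is about $1/b$ and matches no cluster root, so $f_n\ne\pm\tilde f_n$, completing the argument.
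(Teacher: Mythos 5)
A preliminary remark: the survey states this theorem without proof, citing Harrington's paper, so there is no in-paper argument to compare against; your proposal has to stand on its own. Its peripheral components do stand: the triangle-inequality argument showing any unit-circle root is $\pm 1$, the verification that $(x\pm1)^2\nmid f$ and that $x-1$, $x+1$ cannot both divide $f$, the integer-root analysis isolating the excluded case $nb=9$ (realised by $x^3+3x^2-4=(x-1)(x+2)^2$), and the Rouch\'e argument producing a single dominant real root are all correct and would be part of any proof. But the core of the theorem --- ruling out a factorization $f_n=uv$ with $\deg u,\deg v\ge 2$ --- is not proved; it is deferred with ``I expect the delicate point to be controlling these sum-of-squares representations.'' That deferred point \emph{is} the theorem. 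The identity $\|G\|_2^2=\|f\|_2^2=1+b^2+c^2$ by itself puts almost no constraint on $G$: the integer $1+b^2+c^2$ generically admits many representations as a sum of squares (including sums of many $\pm1$'s), and excluding them requires comparing the \emph{entire} coefficient sequence of $G\tilde G$ with that of $f\tilde f=c\ep_2x^{2n}+bc\ep_1\ep_2x^{2n-1}+b\ep_1x^{n+1}+(1+b^2+c^2)x^n+b\ep_1x^{n-1}+bc\ep_1\ep_2x+c\ep_2$, which is where the hypotheses $c\le 2(b-1)$, $b\ne c$, $nb\ne 9$ actually get consumed. Asserting that $c\le2(b-1)$ is ``calibrated'' to kill all spurious representations is a restatement of the theorem's difficulty, not an argument.

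There is also a logical slip in the step you do state: even if one grants that $G$ must again be a trinomial with coefficients of absolute values $1,b,c$, the correct conclusion is that $G$ is one of the trivial recombinations $\pm f,\pm\tilde f$, which translates into one of $g,h$ being reciprocal or anti-reciprocal up to sign --- \emph{not} into $g$ or $h$ being $x\pm1$. Converting ``$f$ has a (anti-)reciprocal factor'' into ``that factor is $x\pm1$'' needs a further argument (its roots pair up as $\alpha,1/\alpha$, which must be reconciled with the dominant-root picture and the unit-circle analysis), and that link is absent. Finally, your non-reciprocality argument for $f_n$ is flawed as stated: the cluster roots have moduli near $(c/b)^{1/(n-1)}$, which can be well below $1/2$ (take $b$ large and $c=1$), so $1/\rho$ is not automatically distinguishable from them by modulus alone. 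This last point is easily repaired --- if $f_n=\pm\tilde f_n$ then $f=(x\pm1)f_n$ would give $f=\pm\tilde f$, impossible for $n\ge3$ since $f$ has middle exponent $n-1$ while $\tilde f$ has middle exponent $1$ --- but the central gap above is not.
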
 
Theorem \ref{harrington} is no longer true if $c>2(b-1)$. For example,
\begin{align}
& x^6+2\ep x^5+4=(x^3-2x+2\ep)(x^3+2\ep x^2+2x+2\ep);\notag\\
& x^7+2x^6+9=(x^3-3x+3)(x^4+2x^3+3x^2+3x+3).\notag
\end{align}
By Perron's criterion, if $c<b-1,$ then $x^n+b\ep_1x^{n-1}+c\ep_2$ is irreducible. Hence, 
\begin{theorem}\label{harringtonsummary}
Let $n\ge 4, b\ge 2, c\in \N,$ and let $f(x)=x^n+b\ep_1x^{n-1}+c\ep_2$ be a polynomial of degree $n$.
\begin{enumerate}[label=(\alph*)]
    \item If $c<b-1,$ then $f(x)$ is irreducible.
    \item If $b-1\le c\le 2(b-1), b\ne c$ and $f(x)$ is reducible, then $f(x)=(x\pm 1)f_n(x),$ where $f_n(x)\in \Z[x]$ is a non-reciprocal irreducible polynomial. 
\end{enumerate}
\end{theorem}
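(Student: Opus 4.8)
The statement partitions the possible values of the constant term according to whether $c<b-1$ or $b-1\le c\le 2(b-1)$, and in each regime a result already recorded above applies almost verbatim. My plan is therefore purely to verify that the hypotheses on $f(x)=x^n+b\ep_1x^{n-1}+c\ep_2$ match those of Perron's criterion in part (a) and of Theorem~\ref{harrington} in part (b); no new tool is required, and the content-$1$ convention lets me argue over $\Z$ throughout.

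For part (a) I would appeal directly to Perron's criterion. Writing $f(x)=x^n+a_{n-1}x^{n-1}+\cdots+a_1x+a_0$, the only nonzero coefficients below the leading term are $a_{n-1}=b\ep_1$ and $a_0=c\ep_2$, so $|a_{n-1}|=b$ while $1+|a_{n-2}|+\cdots+|a_1|+|a_0|=1+c$. Since $c\in\N$ gives $a_0\ne 0$, Perron's hypothesis $|a_{n-1}|>1+|a_{n-2}|+\cdots+|a_0|$ reads exactly $b>1+c$, which is the assumption $c<b-1$. Hence $f$ is irreducible, and part (a) follows.

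For part (b) I would invoke Theorem~\ref{harrington}. Its hypotheses $b\ge 2$, $b\ne c$, and $c\le 2(b-1)$ are assumed here as well, so the sole remaining condition to check is $nb\ne 9$. This is where a little care is needed: I would observe that $nb=9$ together with $b\ge 2$ forces $b\in\{3,9\}$ and hence $n\in\{3,1\}$, both of which violate $n\ge 4$; therefore $nb\ne 9$ holds automatically under our hypotheses. Theorem~\ref{harrington} then delivers the factorization $f(x)=(x\pm 1)f_n(x)$ with $f_n\in\Z[x]$ non-reciprocal and irreducible whenever $f$ is reducible. There is no genuine obstacle in either part; the only substantive point is this bookkeeping, which shows that replacing the pair of conditions ``$n\ge 3$, $nb\ne 9$'' appearing in Theorem~\ref{harrington} by the single condition ``$n\ge 4$'' loses nothing.
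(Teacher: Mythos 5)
Your proposal is correct and follows exactly the paper's own route: part (a) is Perron's criterion applied directly to $f$ (the condition $|a_{n-1}|>1+|a_0|$ reading $b>1+c$), and part (b) is a direct application of Theorem~\ref{harrington}. The only point the paper leaves implicit is the one you spell out, namely that $nb\ne 9$ holds automatically since $b\ge 2$ and $nb=9$ would force $n\in\{1,3\}$, contradicting $n\ge 4$.
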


 If $n$ is even, then $x^n+bx^{n-1}+b-1=(x+1)f_n(x)$ shows that the lower bound of $c$ can not be improved further. For $b=c,$ Harrington conjectured that
\begin{conj}[Harrington \cite{harrington}]\label{harringtonconjecture}
Let $n,b\in \N$, $b\ge 2$ and $f(x)=x^n+b\ep_1x^{n-1}+b\ep_2$. If $f(x)\ne x^2+4x +4$ and $f(x)\ne x^2-4x+4,$ then $f(x)$ is irreducible. 
\end{conj}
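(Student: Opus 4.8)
\emph{Proof proposal.} The two genuine difficulties are that Perron's criterion cannot apply here (it would require $b=|a_{n-1}|>1+|a_0|=1+b$, which is false) and that Harrington's Theorem~\ref{harrington} is stated only for $b\ne c$, so the diagonal case $b=c$ demands a new idea. The plan is to attack the problem in two stages: a $p$-adic stage that settles all but a thin family of $b$, followed by an archimedean (zero-location) stage for the residual family, which is exactly where the two exceptions live.

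For the $p$-adic stage, fix a prime $p\mid b$ and reduce modulo $p$: since both nonleading nonzero coefficients of $f$ are divisible by $b$, we get $f\equiv x^n\pmod p$. Hence in any factorization $f=gh$ into monic integer polynomials one has $\bar g=x^{\deg g}$ and $\bar h=x^{\deg h}$ in $\F_p[x]$, so $p\mid g(0)$ and $p\mid h(0)$; as $g(0)h(0)=b\ep_2$ this already forces $p^2\mid b$, i.e.\ $b$ must be powerful. To sharpen this I would compute the $p$-adic Newton polygon: the points $(0,v_p(b))$, $(n-1,v_p(b))$, $(n,0)$ have lower hull equal to the single segment from $(0,v_p(b))$ to $(n,0)$ of slope $-v_p(b)/n$ (the middle point lies strictly above it for $n\ge 2$). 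Thus every irreducible factor over $\Q_p$ has all roots of valuation $v_p(b)/n$, so its degree is divisible by $n/\gcd(n,v_p(b))$. Combining over all $p\mid b$, every rational factor of $f$ has degree divisible by $n/\gcd(n,g)$, where $g=\gcd_{p\mid b}v_p(b)$. In particular, if $\gcd(n,g)=1$ then $f$ is irreducible, and this disposes of every $b$ except those that are perfect $d$-th powers for some $d\mid n$ with $d>1$.

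It remains to treat $b=b_2^{d}$ with $d=\gcd(n,g)>1$ and $d\mid n$; here the Newton polygon only restricts factor degrees to multiples of $n/d$, and when $d=n$ it gives no information at all. For this residual family I would pass to the geometry of the zeros. From $\alpha^{n-1}(\alpha+b\ep_1)=-b\ep_2$ one gets $|\alpha|^{n-1}\,|\alpha+b\ep_1|=b$, from which $f$ has a single dominant root $\beta\approx-b\ep_1$ of modulus $\approx b$, while dividing out $(x-\beta)$ leaves a factor close to $x^{n-1}\mp1$, so the remaining $n-1$ roots cluster near the unit circle as $b$ grows. If $f=gh$ nontrivially, the factor carrying $\beta$ has constant term of modulus $\approx b$, forcing the other factor $h$ to satisfy $|h(0)|\to1$; since $h(0)$ is a nonzero integer this pins $|h(0)|=1$ for $b$ large, and a Kronecker/cyclotomic argument on the near-unit-circle roots of $h$ should contradict the structure of $f$ (no root of $f$ is a root of unity for $b\ge2$ by Niven's theorem, and $\pm1$ are never roots). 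The base case $n=2$ is instead finished exactly: the discriminant $b(b-4\ep_2)$ is a perfect square only when $\ep_2=1$ and $b=4$, recovering precisely $x^2\pm4x+4$.

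The hard part will be making the zero-location step uniform down to small $b$ in the high-perfect-power subcase, where the $p$-adic input is vacuous and the clustering of the $n-1$ roots near $|z|=1$ is not yet tight enough to force $|h(0)|=1$; there the estimate must be made quantitative and combined with a finite verification of the remaining $(n,b)$. Concretely, one must rule out balanced factorizations such as the two-quadratic system from $n=4$, $b=b_2^2$, whose equations ($u+s=b_2^2\ep_1$, $v+t+us=0$, $ut+sv=0$, $vt=b_2^2\ep_2$) reduce to showing a small family of Diophantine equations has no integer solutions. I expect this to require either a careful resultant computation or a Runge-type Diophantine argument, and it is the step where the conjecture's real content sits.
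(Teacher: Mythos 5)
You are attempting to prove a statement that the paper itself records as open: Conjecture~\ref{harringtonconjecture} is Harrington's conjecture, stated without proof and revisited in the paper's closing section as an open question, so there is no proof in the paper to compare against. The only proved result in this direction is Theorem~\ref{harrington}, which assumes $b\ne c$ and $c\le 2(b-1)$ and therefore excludes exactly the diagonal case $b=c$ at issue. Your proposal does not close that case either, and to your credit you say so. What you do have is correct and nontrivial: reduction mod a prime $p\mid b$ forces $p\mid g(0)$ and $p\mid h(0)$ in any proper monic factorization $f=gh$, hence $p^2\mid b$; the Newton polygon of $f$ at $p$ is the single segment from $(0,v_p(b))$ to $(n,0)$, so every factor over $\Q_p$, hence over $\Z$, has degree divisible by $n/\gcd(n,v_p(b))$; combining over $p\mid b$ this legitimately reduces the conjecture to the case where $b$ is a perfect $d$-th power for some $d\mid n$, $d>1$, and it settles $n=2$ completely via the discriminant $b(b-4\ep_2)$. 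None of this, however, appears in the paper, which records the statement as unresolved.

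The genuine gaps are concentrated in your archimedean stage, and each is fatal as written. First, the Kronecker step does not apply: from $|h(0)|=1$ and roots of $h$ merely \emph{near} the unit circle you can conclude nothing, since Kronecker's theorem requires all conjugates in the closed unit disk, and monic integer polynomials with $|h(0)|=1$ and roots slightly outside the disk are precisely the setting of Lehmer's problem; you supply no quantitative substitute (a lower bound, uniform in $n$, on how far the Mahler measure of a non-cyclotomic factor must exceed $1$), and none is known at this level of generality. Second, your two regimes do not cover a cofinite set of cases: the clustering argument pins $|h(0)|=1$ only for $b$ large, while the $p$-adic stage leaves infinitely many perfect-power values of $b$, and for each fixed such $b$ every admissible degree $n$ remains to be treated; the claim that only ``a finite verification of the remaining $(n,b)$'' survives would itself require exactly the uniformity you have not established. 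Third, the residual Diophantine systems (such as the two-quadratic system for $n=4$, $b=b_2^2$) are named but not solved, and you correctly identify them as the place ``where the conjecture's real content sits.'' A small repair elsewhere: your appeal to Niven's theorem can be made rigorous, since $f(\zeta)=0$ with $|\zeta|=1$ gives $|\zeta+b\ep_1|=b$, hence $\operatorname{Re}(\zeta)=-\ep_1/(2b)\notin\{0,\pm\tfrac12,\pm1\}$ for $b\ge 2$ (alternatively, Theorem~\ref{schinzeltrinomialcyclo} shows the cyclotomic part of $f$ is trivial); but that step was never the obstruction. In sum: a sound reduction plus a program, not a proof, of what remains an open conjecture.
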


\subsubsection{Reducibility of $\boldsymbol{x^{2m}+b\ep_1x^m+c\ep_2, \, c>1}$}
Let $b,c\in \N, c>1$ and $f(x)=x^{2m}+b\ep_1x^m+c\ep_2$ be a polynomial of degree $2m$. From Nagell's criterion, $f(x)$ is irreducible whenever $b>1+c^{2m-1}$ and $c\ne u^d,$ where $d|2m, d>1, u\in \N$. Also note that if $b^2-4c\ep_2$ is a square, then $x^2+b\ep_1x+c\ep_2$ is reducible and hence $x^{2m}+b\ep_1x^m+c\ep_2$ is a product of two $m$ degree polynomials. 
 
A. Schinzel studied the problem of reducibility of trinomials extensively in \cite{AS9}, \cite{AS10}, \cite{AS11}. He even considered the factorization problem of trinomials over an arbitrary field and provided many necessary and sufficient conditions for several families of trinomials. One such criterion is 
\begin{theorem}[Schinzel \cite{AS9}]
Let $K$ be any field of characteristic different from $2$ and $b,c\in \N$. Then the polynomial $x^{2m}+b\ep_1x^m+c\ep_2$ is reducible over $K$ if and only if either $b^2-4c\ep_2$ is a square in $K$ or there is a prime $p$ dividing $m$ such that $x^{2p}+b\ep_1x^p+c\ep_2$ is reducible over $K$ or $4|m$ and $x^8+b\ep_1x^4+c\ep_2$ is reducible over $K$.
\end{theorem}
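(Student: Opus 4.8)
The plan is to prove the two implications separately, with sufficiency (``if'') being a matter of exhibiting explicit factorizations and necessity (``only if'') resting on a single reduction combined with the field-theoretic generalization of Capelli's theorem stated above for $\Q$. Throughout I write $A=b\ep_1$, $C=c\ep_2$, and $g(y)=y^2+Ay+C$, so that $f(x)=g(x^m)=x^{2m}+Ax^m+C$. For sufficiency I would treat each alternative in turn. If $b^2-4c\ep_2=A^2-4C$ is a square in $K$, then $g(y)=(y-\alpha_1)(y-\alpha_2)$ with $\alpha_1,\alpha_2\in K$, hence $f(x)=(x^m-\alpha_1)(x^m-\alpha_2)$ is a product of two polynomials of positive degree. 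If some prime $p\mid m$ makes $x^{2p}+Ax^p+C$ reducible over $K$, then writing $m=pk$ and substituting $x\mapsto x^k$ turns any nontrivial factorization of $x^{2p}+Ax^p+C$ into one of $f$; the alternative ``$4\mid m$ and $x^8+Ax^4+C$ reducible'' is identical, writing $m=4k$ and substituting $x\mapsto x^k$.

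For necessity I would argue the contrapositive: assuming $A^2-4C$ is not a square in $K$, that $x^{2p}+Ax^p+C$ is irreducible over $K$ for every prime $p\mid m$, and that $4\nmid m$ or $x^8+Ax^4+C$ is irreducible over $K$, I must show that $f$ is irreducible. Since $A^2-4C$ is not a square, $g$ is irreducible over $K$, so $L:=K(\alpha_1)$ is a quadratic extension with $\Gal(L/K)=\{1,\sigma\}$ and $\sigma(\alpha_1)=\alpha_2$. The key device, valid for every exponent $e\ge 1$ precisely because $g$ is irreducible, is the equivalence
\[
x^{2e}+Ax^e+C\ \text{is irreducible over}\ K\quad\Longleftrightarrow\quad x^{e}-\alpha_1\ \text{is irreducible over}\ L.
\]
To see this, pick a root $\theta$ with $\theta^e=\alpha_1$; then $\alpha_1=\theta^e\in K(\theta)$ while $\alpha_1\notin K$, so $L\subseteq K(\theta)$ and $[K(\theta):K]=2\,[K(\theta):L]$. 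As $[K(\theta):L]$ is the degree of the irreducible factor of $x^e-\alpha_1$ over $L$ through $\theta$, the left-hand polynomial attains its full degree $2e$ exactly when $[K(\theta):L]=e$, i.e.\ when $x^e-\alpha_1$ is irreducible over $L$.

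Now I would feed this into the Capelli--R\'edei theorem over $L$ (the arbitrary-field version of Capelli's theorem, applicable since $\operatorname{char}L\ne 2$): $x^e-\alpha_1$ is reducible over $L$ iff $\alpha_1$ is a $p$-th power in $L$ for some prime $p\mid e$, or $4\mid e$ and $\alpha_1=-4\delta^4$ for some $\delta\in L$. Taking $e=p$ identifies the second alternative of the theorem with ``$\alpha_1$ is a $p$-th power in $L$ for some prime $p\mid m$'', and taking $e=4$ identifies the third alternative with ``$4\mid m$ and ($\alpha_1$ is a square in $L$ or $\alpha_1=-4\delta^4$ for some $\delta\in L$)''. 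My negated hypotheses then give that $\alpha_1$ is not a $p$-th power in $L$ for any prime $p\mid m$, and, when $4\mid m$, that $\alpha_1\ne-4\delta^4$ for all $\delta\in L$ (the square case being already excluded by $p=2$). Hence both alternatives of Capelli's theorem fail for $e=m$, so $x^m-\alpha_1$ is irreducible over $L$, and the equivalence at $e=m$ yields that $f$ is irreducible over $K$, as required.

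I expect the main obstacle to be the bookkeeping of the overlap between the second and third alternatives: the prime $2$ occurs both among the divisors of $m$ in the second and inside the factorization analysis of $x^8+Ax^4+C$ in the third, so one must confirm that the genuinely new content of the third alternative is exactly the clause $\alpha_1=-4\delta^4$---which is precisely the ``$4\mid n,\ a=-4b^4$'' exception already visible in Capelli's theorem for $\Q$. The only input beyond elementary tower-of-fields degree arithmetic is the passage of that theorem from $\Q$ to the quadratic extension $L$; this classical Capelli--R\'edei statement is what forces the hypothesis $\operatorname{char}K\ne 2$.
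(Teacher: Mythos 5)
The survey states this theorem only as a quotation from Schinzel \cite{AS9} and contains no proof of it, so there is no in-paper argument to compare yours against; I therefore assess your proposal on its own terms. It is correct and complete modulo the two classical inputs you invoke: the discriminant criterion for quadratics in characteristic $\ne 2$, and Capelli's theorem over an arbitrary field (reducibility of $x^e-a$ iff $a$ is a $p$-th power for some prime $p\mid e$, or $4\mid e$ and $a=-4\delta^4$). The sufficiency direction via the substitution $x\mapsto x^k$ is routine and right. The heart of the matter, namely the equivalence (for $g$ irreducible over $K$) between irreducibility of $g(x^e)$ over $K$ and irreducibility of $x^e-\alpha_1$ over $L=K(\alpha_1)$, is established correctly by the degree count $[K(\theta):K]=2[K(\theta):L]$ together with the observation that a monic polynomial of degree $2e$ with a root of degree $2e$ is irreducible; and your bookkeeping of the overlap between the $p=2$ clause and the $x^8$ clause (the only genuinely new content of the latter being $\alpha_1\notin -4L^4$) is exactly right. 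Two cosmetic remarks: the Galois group of $L/K$ plays no role in your argument and can be deleted; and characteristic $\ne 2$ is really consumed by the quadratic/discriminant steps (both in sufficiency and in concluding that $g$ is irreducible), not by Capelli's theorem, which is valid over every field. This reduction to Capelli over the quadratic extension is the classical route and is essentially how Schinzel's own argument runs, so your proof is not only valid but the expected one.
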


It is, therefore, sufficient to find the reducibility condition of $x^{2p}+b\ep_1x^p+c\ep_2$ to completely determine the reducibility of $x^{2m}+b\ep_1x^m+c\ep_2,$ where $p$ is a prime dividing $m$. In the course of  determining a Galois group of a family of polynomials, Filaseta et al. \cite{filaseta8} proved that 

\begin{theorem}[Filaseta et al. \cite{filaseta8}]
Let $p$ be an odd prime and $u\ge 2$. Then $x^{2p}+\ep_1x^p+u^p$ is irreducible over $\Q$.
\end{theorem}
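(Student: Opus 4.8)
The plan is to exploit the substitution structure $f(x)=g(x^p)$ with $g(y)=y^2+\ep_1 y+u^p$. Since $u\ge 2$ and $p\ge 3$, the discriminant $1-4u^p$ is a negative integer, so $g$ is irreducible over $\Q$ and its roots $\beta,\bar\beta$ generate the imaginary quadratic field $K=\Q(\sqrt{1-4u^p})$ with $[K:\Q]=2$. Over $K$ we have $f(x)=(x^p-\beta)(x^p-\bar\beta)$. By Capelli's theorem applied over $K$ (it holds over an arbitrary field, and the exceptional case $4\mid n$ does not occur because $p$ is odd), $x^p-\beta$ is irreducible over $K$ precisely when $\beta$ is not a $p$-th power in $K$. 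If that holds, then a root $\alpha$ of $x^p-\beta$ satisfies $\beta=\alpha^p\in\Q(\alpha)$, so $K\subseteq\Q(\alpha)$ and $[\Q(\alpha):\Q]=[\Q(\alpha):K]\,[K:\Q]=2p=\deg f$; as $\alpha$ is a root of $f$, this forces $f$ to be the minimal polynomial of $\alpha$ and hence irreducible over $\Q$. Thus everything reduces to the single claim that $\beta$ is not a $p$-th power in $K$.

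Suppose for contradiction that $\beta=\gamma^p$ with $\gamma\in K$. Being a root of the monic polynomial $x^p-\beta$ over $\mathcal{O}_K$, $\gamma$ lies in $\mathcal{O}_K$ by integral closedness; it is non-real (otherwise $\beta=\gamma^p\in\Q$), so its trace $s=\gamma+\bar\gamma\in\Z$ satisfies $s^2<4u$, while its norm gives $N(\gamma)^p=N(\beta)=\beta\bar\beta=u^p$, hence $N(\gamma)=\gamma\bar\gamma=u$. Writing $t_k=\gamma^k+\bar\gamma^k$, the $t_k$ are integers obeying $t_0=2$, $t_1=s$ and $t_k=s\,t_{k-1}-u\,t_{k-2}$, and the defining relation becomes
\[
t_p=\gamma^p+\bar\gamma^p=\beta+\bar\beta=-\ep_1\in\{-1,+1\}.
\]
So it suffices to prove that this Lucas/Dickson value cannot equal $\pm1$.

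I would first pin down $s$: reducing the recurrence modulo $s$ shows every odd-indexed term is $\equiv 0\pmod{s}$, so $s\mid t_p$, and together with $t_p=\pm1$ this forces $s=\pm1$ (the case $s=0$ gives $t_p=0$); replacing $\gamma$ by $-\gamma$ then lets me assume $s=1$. Next I would invoke the Dickson expansion $t_p=\sum_{j\ge0}c_j(-u)^j$ with $c_0=1$, $c_1=p$, where each coefficient $c_j=\tfrac{p}{p-j}\binom{p-j}{j}$ with $1\le j\le\frac{p-1}{2}$ is an integer divisible by $p$ (here the primality of $p$ is essential, since it forces $p\nmid(p-j)$). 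Every term with $j\ge1$ is then divisible by $pu$, so $t_p\equiv1\pmod{pu}$; this instantly rules out $t_p=-1$, as $pu\mid 2$ is impossible for $p\ge3,\,u\ge2$. The remaining case $t_p=1$ is the crux: it gives $\sum_{j\ge1}c_j(-u)^j=0$, and dividing by $-u$ and reading off the leading coefficient $c_1=p$ yields $u\mid p$, hence $u=p$; substituting $u=p$, dividing by $p$, and reducing modulo $p$ collapses the identity to $1\equiv c_2\equiv0\pmod{p}$, a contradiction.

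The main obstacle is exactly the disposal of the case $t_p=1$. The two cheap congruence obstructions, modulo $p$ and modulo $u$, are simultaneously insufficient here, because $t_p=1$ is consistent with $t_p\equiv1\pmod{pu}$; one is forced into the two-step descent, first deducing $u=p$ and then extracting a contradiction modulo $p$ from the vanishing of the $p$-divisible Dickson coefficients. Verifying the integrality and the exact $p$-adic valuation of the $c_j$ is where the primality of $p$ genuinely enters, and I expect it to require the most care; by comparison the field-theoretic reduction of the first paragraph and the determination $s=\pm1$ are routine.
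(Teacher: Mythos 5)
The survey you are working from never proves this statement: it is quoted from Filaseta et al.\ \cite{filaseta8} with only a citation, so there is no in-paper argument to compare against, and your proposal has to stand on its own. It does: the proof is correct and complete. The field-theoretic reduction is sound --- $g(y)=y^2+\ep_1 y+u^p$ has negative discriminant $1-4u^p$, hence is irreducible over $\Q$; Vahlen--Capelli over $K=\Q\bigl(\sqrt{1-4u^p}\bigr)$ with prime exponent $p$ requires only $\beta\notin K^p$; and if $x^p-\beta$ is irreducible over $K$, then $\Q(\alpha)=K(\alpha)$ gives $[\Q(\alpha):\Q]=2p$, forcing $f$ to be the minimal polynomial of $\alpha$. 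The Diophantine core also checks out: $\gamma\in\Od_K$ by integral closure, $N(\gamma)=u$ because positive real $p$-th roots are unique, $s\mid t_p$ for odd index forces $s=\pm 1$, and the normalization $\gamma\mapsto-\gamma$ merely swaps the two values $\pm 1$ of $t_p$, both of which you subsequently eliminate, so no generality is lost. The coefficients $c_j=\tfrac{p}{p-j}\binom{p-j}{j}$ are indeed integers (e.g.\ $c_j=\binom{p-j}{j}+\binom{p-j-1}{j-1}$) divisible by $p$ for $1\le j\le\tfrac{p-1}{2}$ since $p\nmid(p-j)$; hence $t_p\equiv 1\pmod{pu}$, which kills $t_p=-1$, and your two-step descent for $t_p=1$ (first $u\mid p$, so $u=p$; then $1\equiv c_2\equiv 0\pmod p$) is valid. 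One remark worth making explicit: for $p=3$ the sum over $j\ge 2$ is empty, so the identity collapses to $p=0$ and the contradiction arrives even earlier; your argument covers this degenerate case, but a referee would want the sentence. Your route --- factoring over the imaginary quadratic field and converting the $p$-th-power obstruction into a Lucas-sequence/Dickson-polynomial congruence --- is in the same Diophantine spirit as the original source, but it is elementary and self-contained, which is a genuine asset here.
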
 

Later they separately considered the reducibility of trinomials of the form $x^{2p}+b\ep_1x^p+c\ep_2,$ where $b, c \in \N $ and $p$ is a prime number in \cite{filaseta2}. They noticed that if $b^2-4c\ep_2=d^2$ or $c=b^2=d^{2p}, p\ge 5$ or $c=\frac{b^2}{2}=2^pd^{2p}, p\ge 3$ or $c=\frac{b^2}{3}=3^pd^{2p}, p\ge 5,$ for some integer $d\in \Z$, then $x^{2p}+b\ep_1x^p+c\ep_2$ is reducible over $\Z$. It has been proved in \cite{filaseta2} that if $b,c$ do not satisfy any of these conditions, then $x^{2p}+b\ep_1x^p+c\ep_2$ is irreducible over $\Z$ provided 
\begin{equation*}
p\nmid (181^2-1)\underset{q|b}{\prod\limits_{q \text{ prime }}} (q^2-1).
\end{equation*}
However, there are irreducible polynomials even when $p$ divides the above product. For example, $x^4+2x^2+3$ is irreducible. Here, $p=b=2, c=3;$ $b,c$ do not satisfy any of the above conditions and the product is $(181^2-1)(2^2-1)$ divisible by $2.$

\subsubsection{Reducibility of $\boldsymbol{x^n+b\ep_1x+c\ep_2, \, c>1}$}
Suppose $f(x)=x^n+b\ep_1x+c\ep_2$ is a polynomial of degree $n\ge 2,$ where $b\in \N$ and  $c\ge 2, \ep_1,\ep_2\in \{\, -1, +1\,\}$.   
Serret \cite{serret} and Ore \cite{ore} studied the irreducibility of trinomials of the form $x^n-x+c\ep_2$ independently. By the use of reducibility of polynomials modulo a prime, they gave an irreducibility criterion for them.  

\begin{theorem}[Serret \cite{serret}, Ore \cite{ore}]
Let $c\in \N$ and $f(x)=x^p-x+c\ep_2$ be a polynomial of prime degree $p$. Then $f(x)$ is irreducible if $p\nmid c$. 
\end{theorem}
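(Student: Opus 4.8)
The plan is to use the reduction-modulo-$p$ criterion together with the special structure of the Artin--Schreier polynomial $x^p-x+a$ over $\F_p$. Since $f$ is monic, it suffices to exhibit a single prime modulo which $f$ reduces to an irreducible polynomial, and the natural choice is the prime $p$ itself. Accordingly, I would first reduce $f$ modulo $p$ to obtain $\bar f(x)=x^p-x+\overline{c\ep_2}\in\F_p[x]$, noting that $\overline{c\ep_2}\ne 0$ in $\F_p$ precisely because $p\nmid c$. The crucial claim is then that for every $a\in\F_p^\times$ the polynomial $x^p-x+a$ is irreducible over $\F_p$.

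To establish this claim I would argue via the Frobenius automorphism. Let $\alpha$ be a root of $x^p-x+a$ in a splitting field. Using $j^p=j$ for each $j\in\F_p$, a direct substitution shows that $\alpha+j$ is again a root for every $j\in\F_p$, so the $p$ roots are exactly $\alpha,\alpha+1,\dots,\alpha+(p-1)$, all distinct. Now $\phi:x\mapsto x^p$ sends $\alpha$ to $\alpha^p=\alpha-a$, hence $\phi^k(\alpha)=\alpha-ka$. Because $a\ne 0$ and $\F_p$ is a field (here the primality of $p$ is used), $ka=0$ forces $p\mid k$; thus $\alpha$ has exactly $p$ distinct Frobenius conjugates and $[\F_p(\alpha):\F_p]=p$. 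Consequently the minimal polynomial of $\alpha$ over $\F_p$ has degree $p$ and must coincide with $\bar f$, so $\bar f$ is irreducible.

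Finally I would invoke the standard reduction criterion. Since $f$ is monic, any nontrivial factorization of $f$ over $\Z$ (equivalently over $\Q$, by Gauss's lemma) would reduce modulo $p$ to a nontrivial factorization of $\bar f$ whose factors have the same degrees as the original factors; this contradicts the irreducibility of $\bar f$. Therefore $f$ is irreducible over $\Q$.

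The only genuine content in this argument is the irreducibility of the Artin--Schreier polynomial $x^p-x+a$ modulo $p$; the reduction step is entirely routine. The main point requiring care is that the Frobenius computation pins down $[\F_p(\alpha):\F_p]$ to be exactly $p$ rather than a proper divisor, and this is exactly where both the hypothesis $p\nmid c$ (equivalently $a\ne 0$) and the primality of $p$ enter. If either assumption were dropped the degree could collapse and $\bar f$ could split, so no shortcut around these two inputs is available.
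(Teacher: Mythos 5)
Your proof is correct, and it follows exactly the approach the paper attributes to Serret and Ore, namely reducibility of polynomials modulo a prime: you reduce modulo $p$, prove the Artin--Schreier polynomial $x^p-x+a$, $a\in\F_p^\times$, is irreducible over $\F_p$ via the Frobenius orbit argument, and conclude with the modulo $p$-test that the paper itself records (Gallian, Theorem 17.3). No gaps; the Frobenius computation correctly pins down the degree $[\F_p(\alpha):\F_p]=p$ using both $p\nmid c$ and the primality of $p$.
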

There are irreducible polynomials with $p|c$. For example, $x^5-x+10$ and $x^2-x+2$ both are irreducible. Irreducible polynomials can be found for  composite values of $n$ as well. For example, 
\begin{align}
& x^4-x+2; \, \, && x^{15}-x+3;\notag\\
& x^{10}-x+6;\,\, && x^6-x+18,\notag
\end{align} 
are all irreducible. A.I. Bonciocat and N.C. Bonciocat \cite{bonciocat1} gave an irreducibility criterion for arbitrary polynomials with constant term divisible by large prime power. A version of their result in case of trinomials is 
\begin{theorem}[Bonciocat \& Bonciocat \cite{bonciocat1}]\label{bonciocattrinomial}
Let $a,b,c\in \N,$ $p$ be a prime number, and $p\nmid bc$, $r\ge 0, k\ge 1$. The polynomial $ax^n+bp^r\ep_1x+p^kc\ep_2$ is irreducible if 
\begin{equation*}
p^k>bp^{2r}+c^{n-1}ap^{nr}.
\end{equation*}
\end{theorem}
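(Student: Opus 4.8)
The plan is to argue by contradiction. Assuming $f(x)=ax^n+bp^r\ep_1x+p^kc\ep_2$ is reducible, the standing primitivity convention together with Gauss's lemma lets me write $f=gh$ with $g,h\in\Z[x]$ and $\deg g=s\ge 1$, $\deg h=t\ge 1$. I will derive two incompatible estimates on the constant term of one of the two factors: a sharp \emph{archimedean} lower bound coming from the moduli of the roots, and a \emph{$p$-adic} upper bound coming from how the prime power $p^k$ can be split between $g(0)$ and $h(0)$. Matching these two bounds is the whole point, and the hypothesis $p^k>bp^{2r}+c^{n-1}ap^{nr}$ is exactly what makes them collide.

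The first and central step is the archimedean lemma: \emph{every complex root $\theta$ of $f$ satisfies $|\theta|>cp^r$}. To see this I would suppose $|\theta|\le cp^r$; then from $a\theta^n+bp^r\ep_1\theta=-p^kc\ep_2$ and the triangle inequality,
\[
p^kc\le a|\theta|^n+bp^r|\theta|\le a c^n p^{nr}+b c p^{2r},
\]
so dividing by $c$ gives $p^k\le c^{n-1}ap^{nr}+bp^{2r}$, contradicting the hypothesis. This is precisely where the full inequality is consumed. As an immediate consequence, writing $h(x)=h_tx^t+\cdots+h_0$ with $|h_t|\ge 1$, the factor $h$ has
\[
|h(0)|=|h_t|\prod_{h(\theta)=0}|\theta|>(cp^r)^{\,t}\ge cp^r,
\]
and likewise for $g$; the last inequality only uses $t\ge 1$ and $cp^r\ge 1$.

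The second step produces the matching $p$-adic upper bound from the constant and linear coefficients of $f=gh$. Let $g_0,h_0$ be the constant terms and $g_1,h_1$ the coefficients of $x$. Then $g_0h_0=\pm p^kc$ and $g_0h_1+g_1h_0=bp^r\ep_1$. Since $p\nmid c$ we have $v_p(g_0)+v_p(h_0)=k$, and since $p\nmid b$ we have $v_p(g_0h_1+g_1h_0)=r$, whence $\min\!\big(v_p(g_0)+v_p(h_1),\,v_p(g_1)+v_p(h_0)\big)\le r$. Therefore $v_p(g_0)\le r$ or $v_p(h_0)\le r$; relabelling the factors, assume $v_p(h_0)\le r$. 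Writing $g_0,h_0$ as $p^{v_p(\cdot)}$ times their $p$-free parts, the relation $g_0h_0=\pm p^kc$ forces the $p$-free parts to multiply to $\pm c$, so the $p$-free part of $h_0$ divides $c$ and hence $|h_0|\le c\,p^{v_p(h_0)}\le cp^r$. This flatly contradicts $|h(0)|>cp^r$ from the first step, so $f$ admits no such factorization and is irreducible.

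The step I expect to be the real obstacle is getting a $p$-adic upper bound on a factor's constant term that is sharp enough to meet the archimedean lower bound. A naive pigeonhole on the distribution of $p^k$ only yields that some factor has $v_p\le k/2$, which is far too weak (it would merely give $|h_0|\le cp^{k/2}$). The decisive trick is to read off "$v_p\le r$ for one of the factors" from the \emph{linear} coefficient identity above, which pins the bound at exactly $cp^r$ and makes the two estimates collide. I would also note that this argument never needs $p\nmid a$: neither the archimedean lemma nor the coefficient bookkeeping uses the leading coefficient beyond $|h_t|\ge 1$, so the proof goes through for arbitrary $a\in\N$ once $f$ is taken primitive.
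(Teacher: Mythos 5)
Your proof is correct and complete. A point of order first: the survey itself gives no proof of this statement---it is quoted directly from Bonciocat \& Bonciocat \cite{bonciocat1}, and the only proof supplied nearby is for the special case $x^n+p\ep_1x+p^k\ep_2$, by an entirely different (reduction mod $p$) argument---so your writeup has to stand on its own, and it does. The archimedean lemma is exactly where the hypothesis is consumed: if $|\theta|\le cp^r$ then the triangle inequality applied to $a\theta^n+bp^r\ep_1\theta=-p^kc\ep_2$ gives $p^k\le c^{n-1}ap^{nr}+bp^{2r}$, contradicting the assumption, so every root satisfies $|\theta|>cp^r$, and hence any factorization $f=gh$ in $\Z[x]$ with $\deg g,\deg h\ge 1$ (legitimate by the paper's content-one convention plus Gauss's lemma) has $|g(0)|>cp^r$ and $|h(0)|>cp^r$. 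The $p$-adic side is also sound: $g_0h_0=\pm p^kc$ with $p\nmid c$ gives $v_p(g_0)+v_p(h_0)=k$, the identity $g_0h_1+g_1h_0=bp^r\ep_1$ with $p\nmid b$ and the ultrametric inequality force $\min\bigl(v_p(g_0),v_p(h_0)\bigr)\le r$, and since the $p$-free parts of $g_0$ and $h_0$ multiply to $\pm c$, the factor with small valuation has constant term of absolute value at most $cp^r$---colliding with the root bound. You correctly identify the decisive trick: pigeonholing $p^k$ between the two constant terms is useless, and it is the \emph{linear} coefficient that pins the valuation at $r$. Your closing observations are also accurate: $p\nmid a$ is never needed (consistent with the statement), and the edge cases ($h_1$ possibly zero, degree-one factors, $cp^r\ge 1$) all pass. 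This two-sided estimate---location of roots played against the possible $p$-adic splittings of the constant term---is the standard strategy behind results of this type, including those of \cite{bonciocat1}; since the survey omits any proof, your reconstruction is a genuine addition rather than a paraphrase.
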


It is not possible to improve Theorem \ref{bonciocattrinomial} for arbitrary values of $a,b,c$. For example, let $f(x)=x^2+2x-8$. Then $a=b=c=1, p=n=2, r=1, k=3, 2^k=2^{2r}+2^{nr}$ and $f(x)=(x-2)(x+4)$.

\subsubsection*{{\bf Reducibility} of $\boldsymbol{x^n+p^r\ep_1x+p^k\ep_2}$}

Let $p$ be a prime number and $f(x)=x^n+p^r\ep_1x+p^k\ep_2$ be a trinomial of degree $n\ge 2,$ $ r, k\ge 0, \ep_1, \ep_2\in \{\, -1, +1\,\}$.
If $a=b=c=1$ in Theorem \ref{bonciocattrinomial}, then 

{\em The polynomial $x^n+p^r\ep_1 x+p^k\ep_2$ is irreducible if}\\ $$p^k>p^{2r}+p^{nr},\quad\text{ where }  k\ge 1, r\ge 0.$$ 

There are irreducible polynomials that do not satisfy the condition of Theorem \ref{bonciocattrinomial}. For example, $x^4+2\ep_1 x+2^4\ep_2$ is irreducible while $2^4<2^2+2^4$.

It is possible to find more irreducible families of polynomials of the form $x^n+p^r\ep_1x+p^k\ep_2$, where $p$ is a prime number and $r,k\ge 0$.  The case $k=0$ has already been solved in $x^n+b\ep_1x+\ep_2$ and $k=1, r\ge 1$ case follows from Eisenstein's criterion.

{\bf Eisenstein's criterion \cite{eisen}:} Let $p$ be a prime number and let $f(x)=a_nx^n+\cdots+a_1x+a_0\in \Z[x]$ be a polynomial of degree $n$. If $p|a_i$ for $0\le i\le n-1$, and $p\nmid a_n, p^2\nmid a_0$, then $f(x)$ is irreducible over $\Z$. 

 The case $k=1, r=0$ and $p$ is an odd prime follows from Theorem \ref{bonciocattrinomial}. In \cite{biswajit3} (see Theorem \ref{ourprimecase} below), we have shown that if $x^n+\ep_1 x+2\ep_2$ is reducible, then it is a product of cyclotomic factors and an irreducible non-reciprocal polynomial.  If $k\ge 2$ and $r=0$, then it is irreducible by Theorem \ref{bonciocattrinomial}. An alternate proof of this particular case can also be found in \cite{biswajit2}. For $k\ge 2, r=1$, we will show that 
\begin{theorem}
Let $p$ be a prime number and $k\ge 1, n\ge 2$. Then $x^n+p\ep_1x+p^k\ep_2$ is irreducible. 
\end{theorem}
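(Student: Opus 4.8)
The plan is to dispose of $k=1$ at once and to reduce the case $k\ge 2$ to the non-existence of a rational root by means of the $p$-adic Newton polygon. First I would handle $k=1$ with Eisenstein's criterion at the prime $p$: the leading coefficient is a unit, every lower coefficient ($p\ep_1$, the vanishing intermediate ones, and the constant $p\ep_2$) is divisible by $p$, and $p^2\nmid p\ep_2$, so $f$ is irreducible. From now on I assume $k\ge 2$.

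Next I would draw the Newton polygon of $f$ with respect to $p$, whose relevant lattice points are $(0,k)$, $(1,1)$ and $(n,0)$, the coefficients of $x^2,\dots,x^{n-1}$ being $0$. A short computation (the condition is $(k-1)(n-1)=1$) shows that $(1,1)$ lies on the segment joining $(0,k)$ and $(n,0)$ precisely when $k=n=2$, and strictly below it otherwise. The borderline quadratic $x^{2}+p\ep_1x+p^{2}\ep_2$ has discriminant $p^{2}(1-4\ep_2)$ with $1-4\ep_2\in\{-3,5\}$, which is never a square, so that case is irreducible and may be set aside. In all remaining cases the polygon has two edges: one of horizontal length $1$ and integer slope $1-k$, and one of horizontal length $n-1$ and slope $-1/(n-1)$. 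By Dumas' theorem the first edge yields a single linear factor of $f$ over $\Q_p$, while the second, whose slope has denominator $n-1$ in lowest terms, yields a single irreducible factor of degree $n-1$ over $\Q_p$. Hence over $\Q_p$ the polynomial $f$ factors as an irreducible linear factor times an irreducible factor of degree $n-1$.

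Since the factors of $f$ over $\Q$ are, after base change to $\Q_p$, products of the two $\Q_p$-irreducible factors just found, the only possible degrees of a proper rational factor are $1$ and $n-1$. Consequently, if $f$ were reducible over $\Q$ it would possess a rational, hence integer, root $m$ with $m\mid p^{k}$, i.e. $m=\pm p^{j}$ for some $0\le j\le k$. Reducing modulo $p$ rules out $m=\pm1$, since then $f(m)\equiv\pm1\not\equiv0\pmod p$. For $m=\sigma p^{j}$ with $\sigma\in\{-1,1\}$ and $j\ge1$ I would compare the three $p$-adic valuations $jn$, $j+1$ and $k$ of the monomials $m^{n}$, $p\ep_1 m$ and $p^{k}\ep_2$; a vanishing sum forces the smallest of these valuations to be attained, with exact cancellation, by at least two of the terms.

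The main obstacle is exactly this cancellation analysis, and it divides sharply according to the parity of $p$. For odd $p$ the two equal-valuation terms differ by the factor $\ep_1\sigma+\ep_2\in\{-2,0,2\}$, which is a $p$-adic unit whenever it is non-zero, so no genuine cancellation to $0$ can occur, $f$ has no integer root, and the proof is complete. The delicate point is $p=2$: here $\ep_1\sigma+\ep_2=\pm2$ contributes an extra factor of $2$ that can force all three valuations to coincide, and the finitely many configurations with $j(n-1)=2$ must then be examined individually. This is the crux of the argument, and it is genuinely sensitive: the factorization $x^{3}-2x+4=(x+2)(x^{2}-2x+2)$ already realizes such a $2$-adic cancellation, so completing this step appears to require either the extra hypothesis that $p$ is odd or an explicit enumeration of the exceptional $2$-adic trinomials.
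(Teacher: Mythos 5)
Your proposal does not close the case $p=2$, and it cannot: the theorem as stated is false, and your closing example is a genuine counterexample. Indeed $x^3-2x+4=x^3+2\ep_1x+2^2\ep_2$ with $\ep_1=-1$, $\ep_2=+1$, $k=2$, and it factors as $(x+2)(x^2-2x+2)$; similarly $x^2+2\ep_1x-2^3=(x-2\ep_1)(x+4\ep_1)$, a factorization the paper itself records a few lines \emph{after} this theorem (the discussion of $p=2$, $n=2$, $k=nr+1$ with $r=1$). The flaw in the paper's proof sits exactly where you located the difficulty. The paper writes $f=f_1f_2$ with monic factors of degrees $s$ and $t$, reduces modulo $p$ to get $(f_1)_p(f_2)_p=x^n$, concludes that every non-leading coefficient $c_i,d_j$ is divisible by $p$, and then asserts that $p^2$ divides the coefficient of $x$, namely $c_{s-1}d_t+c_sd_{t-1}$. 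That assertion requires $s\ge 2$ \emph{and} $t\ge 2$: if $s=1$, then $c_{s-1}=c_0=1$ is the leading coefficient, the coefficient of $x$ in $f$ equals $d_t+c_1d_{t-1}$, and this is forced to be divisible only by $p$, not by $p^2$. So the paper's argument rules out factorizations into two factors of degree at least two, but never rules out a linear factor --- which is precisely the shape of the counterexamples.

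Your Newton-polygon route is not just different from the paper's coefficient comparison; it is the argument that yields the correct statement. For $k\ge 2$ with $(n,k)\ne(2,2)$, the two edges (lengths $1$ and $n-1$, slopes $1-k$ and $-1/(n-1)$) show that over $\Q_p$ the polynomial is a linear factor times an irreducible factor of degree $n-1$, so reducibility over $\Q$ is equivalent to the existence of an integer root $\pm p^j$, and your valuation analysis settles every odd $p$. (In the final write-up, do record the three-way coincidence $jn=j+1=k$, which forces $j=1$, $n=k=2$; there the unit sum $1+\ep_1\sigma+\ep_2$ is odd, hence nonzero, so no root arises even when this sum is $\pm 3$ and $p=3$.) Completing your $p=2$ enumeration --- the coincidences $j(n-1)=1$ with $k=jn+1$, and $j(n-1)=2$ with $k=j+1$ --- shows that the full list of exceptions is $x^2+2\ep_1x-8=(x-2\ep_1)(x+4\ep_1)$ and $x^3-2x+4\ep_2=(x+2\ep_2)(x^2-2\ep_2x+2)$. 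The theorem should therefore be restated: $x^n+p\ep_1x+p^k\ep_2$ is irreducible for every odd prime $p$, and for $p=2$ apart from these two families. In short, your approach both detects the error and repairs the result, whereas the paper's proof is valid only against factorizations with both factors of degree at least two.
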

\begin{proof}
Suppose $f(x)$ is reducible. Let $f(x)=f_1(x)f_2(x)$ be a proper factorization of $f(x)$, where 
\begin{align}
& f_1(x)=x^s+c_1x^{s-1}+\cdots+c_{s-1}x+c_s;\notag\\
& f_2(x)=x^t+d_1x^{t-1}+\cdots+d_{t-1}x+d_t,  \quad t+s=n.\notag
\end{align}
Comparing the coefficient of $x$ in $f(x)=f_1(x)f_2(x)$, we have
 \begin{align}
& c_{s-1}d_t+c_sd_{t-1}=p\ep_1,\label{trinomialeq1}
\end{align}
Reducing the coefficient modulo $p$, we get $f_p(x)=x^n=(f_1)_p(x)(f_2)_p(x)$. This is possible only when $p|c_i, d_j$ for every $i$ and $j$. In other words, $p^2$ divides the left hand side of \eqref{trinomialeq1}, which is not the case for the right hand side of \eqref{trinomialeq1}. Hence, $x^n+p\ep_1x+p^k\ep_2$ is irreducible for every $k\ge 1, n\ge 2$.
\end{proof}

If $k\ge 2$ and $r\ge 2$, then there are polynomials which are reducible. For example, 
\begin{align}
& x^5-9x-27=(x^2+3x+3)(x^3-3x^2+6x-9);\notag\\
& x^5+81x+243=(x^2+3x+9)(x^3-3x^2+27);\notag\\
& x^8+81x-81=(x^2-3x+3)(x^6+3x^5+6x^4+9x^3+9x^2-27).\notag
\end{align}
The complete characterization of the case $k\ge 2$ and $r\ge 2$ is not known. The condition of Theorem \ref{bonciocattrinomial}, $p^k>p^{2r}+p^{nr}$ is equivalent to $k>nr$ except for $k= nr+1, p= 2, n= 2$. If $p=2, n=2$ and $k=nr+1$, then $f(x)=x^2+2^r\ep_1x+2^{2r+1}\ep_2$ is reducible only when $\ep_2=-1,$ and in that case $f(x)=(x+2^{r-1}(3+\ep_1))(x-2^{r-1}(3-\ep_1))$. 

On the other hand, the polynomial $f(x)=x^n+p^r\ep_1x+p^k\ep_2$ is irreducible over $\Z$ if and only if $f(p^kx)=p^k\ep_2(p^{k(n-1)}\ep_2x^n+p^r\ep_1\ep_2x+1)$ is irreducible over $\Q$, that is,  if and only if $p^{k(n-1)}\ep_2x^n+p^r\ep_1\ep_2x+1$ is irreducible over $\Z$. The polynomial $\tilde{g}(x)=p^{k(n-1)}\ep_2x^n+p^r\ep_1\ep_2x+1$ is irreducible over $\Z$ if and only if $g(x)=x^n+p^r\ep_1\ep_2x^{n-1}+p^{k(n-1)}\ep_2$ is irreducible over $\Z$. From Perron's criterion, the polynomial $g(x)$ is irreducible if $p^r>p^{k(n-1)}+1$. Since $k\ge 2$, this is same as $r>k(n-1)$.

Therefore, if we summarize all these results for $x^n+p^r\ep_1x+p^k\ep_2$, then it can be put in the following theorem and table below. 
 
\begin{theorem}\label{primepower1}
Let $p$ be a prime number, $k,r\ge 0$ and let $f(x)=x^n+p^r\ep_1x+p^k\ep_2$ be a polynomial of degree $n\ge 2$.
\begin{enumerate}[label=(\alph*)]
\item If $f(x)$ is reducible and either $r\in\{\, 0, 1\,\}$ or $k\in \{\, 0,1\,\}$, then $f(x)=f_c(x)f_n(x)$, where $f_c(x)$ is a product of cyclotomic polynomials and $f_n(x)$ is either $1$ or a non-reciprocal irreducible polynomial. 
\item If $k\ge 2, r\ge 2$ and $f(x)\ne x^2+2^r\ep_1x-2^{2r+1},$ then $f(x)$ is irreducible except possibly for $\frac{k}{n}\le r\le k(n-1)$.
\end{enumerate}  
\end{theorem}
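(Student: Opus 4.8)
The plan is to read this as an assembly theorem: both parts should follow by partitioning the $(r,k)$ range and invoking, in each region, a criterion already recorded above, together with two elementary inequality reductions. For part (a), I would run through the range ``$r\in\{0,1\}$ or $k\in\{0,1\}$'' and show that $f$ is forced to be irreducible in all but two subregions, so the asserted conclusion is either vacuous or is a cited classification. Concretely: if $k=1,\ r\ge 1$, Eisenstein's criterion gives irreducibility; if $k=1,\ r=0$, Theorem \ref{bonciocattrinomial} (with $a=b=c=1$) gives irreducibility for odd $p$, while for $p=2$ the trinomial is $x^n+\ep_1x+2\ep_2$ and the stated structure ($f_c$ cyclotomic times a non-reciprocal irreducible $f_n$) is exactly Theorem \ref{ourprimecase}. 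If $r=0,\ k\ge 2$, Theorem \ref{bonciocattrinomial} again yields irreducibility, and if $r=1,\ k\ge 2$ the trinomial $x^n+p\ep_1x+p^k\ep_2$ is irreducible by the theorem proved just above. The only remaining genuinely reducible region is $k=0$, i.e. $f(x)=x^n+p^r\ep_1x+\ep_2=x^n+b\ep_1x+\ep_2$ with $b=p^r$; here the factorization into a cyclotomic part times a (possibly trivial) irreducible non-reciprocal factor is precisely the classification of $x^n+b\ep_1x+\ep_2$ recorded in Section~\ref{subsec:m=1} (irreducible for $b=p^r\ge 3$; cyclotomic-times-irreducible for $b=1$ and $b=2$). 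Collecting these cases gives (a).

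For part (b), with $k\ge 2,\ r\ge 2$, I would carve out irreducibility from the two opposite ends and show that what is left over is exactly the band $\frac{k}{n}\le r\le k(n-1)$. From below, Theorem \ref{bonciocattrinomial} gives irreducibility when $p^k>p^{2r}+p^{nr}$, and the first reduction is to show this inequality is equivalent to $k>nr$ with the single exception $p=2,\ n=2,\ k=2r+1$: since $n\ge 2$ we have $p^{2r}+p^{nr}\le 2p^{nr}\le p^{nr+1}$, the first inequality being strict unless $n=2$ and the second strict unless $p=2$, so among $k>nr$ the strict bound $p^k>p^{2r}+p^{nr}$ can fail only at $p=2,\ n=2,\ k=2r+1$, while for $k\le nr$ it always fails. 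Translating, $k>nr$ reads $r<\frac{k}{n}$, so $f$ is irreducible below the band. From above, I would use the rescaling/reciprocal transformation recorded just before the theorem: $f$ is irreducible if and only if $g(x)=x^n+p^r\ep_1\ep_2x^{n-1}+p^{k(n-1)}\ep_2$ is, and Perron's criterion applies to $g$ when $p^r>p^{k(n-1)}+1$, i.e. (as $k\ge 2$) when $r>k(n-1)$, giving irreducibility above the band. The complement of $\{r<\frac{k}{n}\}\cup\{r>k(n-1)\}$ is exactly $\frac{k}{n}\le r\le k(n-1)$.

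The one point requiring genuine (if short) work, and the main obstacle, is the exceptional boundary point $p=2,\ n=2,\ k=2r+1$, where Theorem \ref{bonciocattrinomial} does not apply because its inequality degenerates to an equality. There $f(x)=x^2+2^r\ep_1x+2^{2r+1}\ep_2$ is a quadratic, so I would settle it by a discriminant computation: for $\ep_2=+1$ the discriminant is $2^{2r}-4\cdot 2^{2r+1}=-7\cdot 2^{2r}<0$, so $f$ is irreducible and no exception is needed; for $\ep_2=-1$ the discriminant is $2^{2r}+4\cdot 2^{2r+1}=9\cdot 2^{2r}=(3\cdot 2^r)^2$, a perfect square, so $f$ is reducible. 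This reducible quadratic is exactly the polynomial $x^2+2^r\ep_1x-2^{2r+1}$ excluded in the statement, and excising it makes the clean dichotomy of (b) hold. Apart from this lone boundary check, the entire argument is bookkeeping over the regions together with the two elementary inequality reductions above.
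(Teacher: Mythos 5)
Your proposal is correct and follows essentially the same route as the paper: part (a) is assembled from Eisenstein ($k=1$, $r\ge 1$), Theorem \ref{bonciocattrinomial} ($r=0$), Theorem \ref{ourprimecase} ($p=2$, $k=1$, $r=0$), the mod-$p$ theorem proved just before the statement ($r=1$, $k\ge 2$), and the $x^n+b\ep_1x+\ep_2$ classification ($k=0$); part (b) comes from the same two inequality reductions (Bonciocat's bound being equivalent to $k>nr$ away from $p=2$, $n=2$, $k=2r+1$, and Perron applied to the reciprocal-rescaled polynomial giving $r>k(n-1)$). The only cosmetic difference is that you settle the exceptional quadratic $x^2+2^r\ep_1x+2^{2r+1}\ep_2$ by a discriminant computation, whereas the paper exhibits the explicit factorization $(x+2^{r-1}(3+\ep_1))(x-2^{r-1}(3-\ep_1))$ for $\ep_2=-1$; these are interchangeable.
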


\begin{tabular}{|l|l|l|l|l|}
\hline
$k$ & $r$ & $f(x)$ & Irreducible & Reducible \\[3 pt]
\hline
$0$ & $0$ & $x^n-x-1$ & $n\ge 2$ & \\[3 pt]
\cline{3-5}
& & $x^n+x-1$ & $n\not\equiv 5\pmod{6}$ &  $n\equiv 5\pmod{6},$\\[3 pt]
& & & & $f(x)=\Phi_6(x)f_n(x)$
\\[3 pt]
\cline{3-5}
& & $x^n+\ep x+1$ & $n\not\equiv 2\pmod{6}$ &  $n\equiv 2\pmod{6},$  \\[3 pt]
& & & & $f(x)=\Phi_3(\ep x)f_n(x)$\\[3 pt]
 \hline
$0$ & $\ge 1$ & $x^n+\ep_1 p^rx+\ep_2$& $p^r>2$ & \\[3 pt]
\cline{3-5}
& & $x^n+2x-1 $ & $n\ge 2$ & \\[3 pt]
\cline{3-5}
& & $x^n-2x-1$ & $n\equiv 0\pmod{2}$ & $n\equiv 1\pmod{2},$\\[3 pt]
& & & & $f(x)=(x+1)f_n(x)$\\[3 pt]
\cline{3-5}
& & $ x^n-2x+1$ & & \small{$n\ge 3, f(x)=(x-1)f_n(x)$} \\[3 pt]
 & & & & \small{ $n=2, f(x)=(x-1)^2$}\\[3 pt]
\cline{3-5}
& & $x^n+2x+1$ & $n\equiv 1\pmod{2}$ & $n\equiv 0\pmod{2},$\\[3 pt]
& &  & & \small{$n\ge 3, f(x)=(x+1)f_n(x)$}\\[3 pt]
& & & & \small{$n=2, f(x)=(x+1)^2$}\\[3 pt]
\hline
$1$ & $0$ & $x^n+\ep_1 x+p\ep_2$ & $p\ge 3$ & \\[3 pt]
\cline{3-5}
&& $x^n+x-2$ & & $f(x)=(x-1)f_n(x)$\\[3 pt]
\cline{3-5}
& & $x^n-x+2$ & $n\ge 2$ &\\[3 pt]
\cline{3-5}
&& $x^n+x+2$ & $n\equiv 0\pmod{2}$ & $n\equiv 1\pmod{2},$\\[3 pt]
& & && $f(x)=(x+1)f_n(x)$\\[3 pt]
\cline{3-5}
&& $x^n-x-2$ & $n\equiv 1\pmod{2}$ & $n\equiv 0\pmod{2},$\\[3 pt]
& &&& $f(x)=(x+1)f_n(x)$\\[3 pt]
\hline
$1$ & $\ge 1$ & $x^n+p^r\ep_1x+p^k\ep_2$ & $n\ge 2$ & \\[3 pt]
\hline 
$\ge 2$ & $\le 1$ & $x^n+p^r\ep_1x+p^k\ep_2$ & $n\ge 2$ & \\[3 pt]
\cline{2-5}
& $\ge 2$ & $x^n+p^r\ep_1x+p^k\ep_2$ & \multicolumn{2}{|l|}{According to Theorem \ref{primepower1}} \tabularnewline
\hline
\end{tabular}

\begin{qu}
Let $p$ be a prime number, $r,k,n\ge 2.$ For what values of $r$, where $\frac{k}{n}\le r\le k(n-1),$ the polynomial $x^n+p^r\ep_1x+p^k\ep_2, \ep_1,\ep_2\in \{\, -1, +1\,\}$ is irreducible over $\Z$?
\end{qu}

\subsubsection{{\bf Reducibility of}  $\boldsymbol{x^n+b\ep_1x^m+p^k\ep_2, \, k\ge 1}$}
Instead of restricting the  value of $m$ and the middle coefficient, let $f(x)=x^n+b\ep_1x^m+p^k\ep_2$ be an $n$ degree trinomial, where $p$ is a prime number, $b, k\in \N,\ep_1, \ep_2\in \{\, -1, +1\,\}$. If $k=1$ and $p|b$, then $f(x)$ is irreducible due to Eisenstein's criterion. If $k=1$ and $p\nmid b$, there are only finitely many $b$ values for which $f(x)$ is reducible. On the other hand, for $k\ge 2$, the polynomial is irreducible provided $p^k$ is sufficiently large. 

\begin{enumerate}[label=(\Alph*)]
    \item $\boldsymbol{k=1}$\\

By Nagell's criterion, $x^n+b\ep_1x^m+p\ep_2$ is irreducible whenever $b>p^{n-1}+1$. 
If the constant term is a large prime number, then Panitopol and Stef\"{a}nescu \cite{LP} gave an effective criterion for the irreducibility of polynomials.

\begin{theorem}[Panitopal \& Stef\"anescu \cite{LP}]\label{panitopol1}
Let $f(x)=a_nx^n+\cdots+a_1x+a_0\in \Z[x]$ be a polynomial with $|a_0|>|a_1|+\cdots+|a_n|$. If $|a_0|$ is prime or $\sqrt{|a_0|}-\sqrt{|a_n|}<1$, then $f(x)$ is irreducible in $\Z[x]$. 
\end{theorem}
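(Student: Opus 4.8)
The plan is to argue by locating the complex roots of $f$ and then analyzing the constant terms of any hypothetical factorization. The decisive preliminary observation is that the hypothesis $|a_0|>|a_1|+\cdots+|a_n|$ forces every root of $f$ to lie strictly outside the closed unit disk. Since $a_0\ne 0$, no root equals zero. If some root $\alpha$ satisfied $|\alpha|\le 1$, then $|\alpha|^k\le 1$ for every $k\ge 0$, and from the relation $a_0=-(a_1\alpha+\cdots+a_n\alpha^n)$ I would obtain
$$|a_0|\le |a_1||\alpha|+\cdots+|a_n||\alpha|^n\le |a_1|+\cdots+|a_n|<|a_0|,$$
a contradiction. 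Hence all $n$ roots $\alpha_1,\ldots,\alpha_n$ satisfy $|\alpha_i|>1$. This is the only place the dominant-constant-term hypothesis is used.

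Next I would suppose, for contradiction, that $f=gh$ with $g,h\in\Z[x]$ both of degree at least $1$ (under the paper's content-$1$ convention this is exactly reducibility). Writing $f(x)=a_n\prod_{i=1}^{n}(x-\alpha_i)$ over $\C$, the roots of $g$ form a nonempty subset $S\subseteq\{\alpha_1,\ldots,\alpha_n\}$, so $g(0)=\operatorname{lc}(g)\,(-1)^{|S|}\prod_{i\in S}\alpha_i$. Because each $|\alpha_i|>1$ and $|S|\ge 1$, this yields
$$|g(0)|=|\operatorname{lc}(g)|\prod_{i\in S}|\alpha_i|>|\operatorname{lc}(g)|,$$
and symmetrically $|h(0)|>|\operatorname{lc}(h)|$. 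Recalling $a_0=g(0)h(0)$ and $a_n=\operatorname{lc}(g)\operatorname{lc}(h)$, I have produced two integer constant terms, each of absolute value strictly larger than the (nonzero integer) leading coefficient of its factor.

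The two hypotheses are then handled separately. If $|a_0|$ is prime, the factorization $a_0=g(0)h(0)$ forces one factor, say $|h(0)|$, to equal $1$; but $|h(0)|>|\operatorname{lc}(h)|\ge 1$ gives $|h(0)|\ge 2$, a contradiction. If instead $\sqrt{|a_0|}-\sqrt{|a_n|}<1$, I would exploit integrality: $|g(0)|\ge|\operatorname{lc}(g)|+1$ and $|h(0)|\ge|\operatorname{lc}(h)|+1$, so by AM--GM (applied to $|\operatorname{lc}(g)|+|\operatorname{lc}(h)|\ge 2\sqrt{|a_n|}$),
$$|a_0|\ge(|\operatorname{lc}(g)|+1)(|\operatorname{lc}(h)|+1)\ge |a_n|+2\sqrt{|a_n|}+1=\bigl(\sqrt{|a_n|}+1\bigr)^2,$$
whence $\sqrt{|a_0|}-\sqrt{|a_n|}\ge 1$, contradicting the hypothesis.

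The conceptual core is the root-location lemma; everything after it is bookkeeping. The one point genuinely requiring care is that $f$ is not assumed monic, so the leading coefficients of the factors must be tracked throughout. Indeed, the sharper estimate $|g(0)|>|\operatorname{lc}(g)|$, rather than the cruder $|g(0)|>1$, is precisely what lets the AM--GM step recover the exact threshold $\sqrt{|a_0|}-\sqrt{|a_n|}<1$; I expect the only mild obstacle to be verifying this square-root inequality cleanly.
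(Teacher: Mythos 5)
Your proof is correct. Note that the paper itself gives no proof of this statement: it is a survey, and Theorem~\ref{panitopol1} is quoted from the cited reference of Panitopol and Stef\u{a}nescu. Your argument --- all roots lie strictly outside the closed unit disk by the dominant-constant-term hypothesis, hence each factor's constant term strictly exceeds its leading coefficient in absolute value, which kills the prime case at once and, via integrality and AM--GM, yields $|a_0|\ge\bigl(\sqrt{|a_n|}+1\bigr)^2$ in the other case --- is exactly the standard route to this result, and every step (including the careful tracking of the non-monic leading coefficients) checks out.
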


Thus, $x^n+b\ep_1x^m+p\ep_2$ is irreducible for every prime $p>b+1$. Theorem \ref{panitopol1} fails to answer the irreducibility of $x^n+\ep_1x^m+2\ep_2$. We have considered the equality condition in Theorem \ref{panitopol1} and proved that 

\begin{theorem}[Koley \& Reddy \cite{biswajit3}]\label{ourprimecase}
Let $f(x)=a_{n_r}x^{n_r}+a_{n_{r-1}}x^{n_{r-1}}+\cdots+a_{n_1}x^{n_1}+a_0\in \Z[x], r\ge 2$, where $a_i\ne 0$ for every $i$  and let $|a_{n_1}|+\cdots+|a_{n_{r-1}}|+|a_{n_r}|=|a_0|$  be a prime number. If $f(x)$ is reducible, then it is the product of an irreducible non-reciprocal polynomial and distinct cyclotomic polynomials. Furthermore, the product of all of its cyclotomic factors is given by   $f_c(x)=(x^{n_r}+\sgn(a_0a_{n_r}), x^{n_{r-1}}+\sgn(a_0a_{n_{r-1}}),\ldots, x^{n_1}+\sgn(a_0a_{n_1})),$ where $\sgn(x)$ denotes the sign of $x\in\R$.
\end{theorem}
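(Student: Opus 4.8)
The plan is to study the statement through the \emph{location of the zeros} of $f$, exploiting the boundary hypothesis $\sum_{i=1}^{r}|a_{n_i}|=|a_0|=p$ together with the primality of $p$. First I would record two preliminary facts. Every zero $\alpha$ of $f$ satisfies $|\alpha|\ge 1$: for $|x|<1$ one has $|f(x)-a_0|\le\sum_{i=1}^{r}|a_{n_i}||x|^{n_i}<\sum_{i=1}^{r}|a_{n_i}|=|a_0|$, so $f(x)\ne 0$. Moreover $f$ is primitive, since if $p\mid a_{n_i}$ for all $i$ then $\sum_{i=1}^{r}|a_{n_i}|\ge rp>p$ as $r\ge 2$; hence by Gauss's lemma we may work with factorizations over $\Z$. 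Finally, because $r\ge 2$ and each $|a_{n_i}|\ge 1$, the leading coefficient obeys $|a_{n_r}|=p-\sum_{i<r}|a_{n_i}|\le p-(r-1)<p$, a fact I will need for non-reciprocality.

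Next I would exploit the prime constant term. Write an irreducible factorization $f=\pm g_1\cdots g_k$ in $\Z[x]$. Since $\prod_{j}|g_j(0)|=|a_0|=p$ is prime, exactly one factor, say $g_{j_0}$, has $|g_{j_0}(0)|=p$, while every other $g_j$ has $|g_j(0)|=1$. For such a factor $g=g_j$ with $|g(0)|=1$ and leading coefficient $b_s$, the product of its roots has absolute value $|g(0)|/|b_s|=1/|b_s|$; but all its roots lie in $\{\,|\alpha|\ge 1\,\}$, so $1/|b_s|\ge 1$, which forces $|b_s|=1$ and then every root of $g$ on the unit circle. As $\pm g$ is now a monic integer polynomial with all roots on the unit circle, Kronecker's theorem shows its roots are roots of unity, so $g$ is (a sign times) a product of cyclotomic polynomials, and being irreducible it is a single $\Phi_m$. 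Hence every $g_j$ with $j\ne j_0$ is cyclotomic, whereas $f_n:=g_{j_0}$ is irreducible and non-cyclotomic because $|f_n(0)|=p>1$. Since the cyclotomic factors are monic, the leading coefficient of $f_n$ has absolute value $|a_{n_r}|<p$; were $f_n$ reciprocal we would have $|\mathrm{lc}(f_n)|=|f_n(0)|=p$, a contradiction, so $f_n$ is non-reciprocal. This establishes the structural part of the theorem.

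It remains to identify the product $f_c$ of the cyclotomic factors. A root of unity $\zeta$ is a zero of $f$ if and only if $\sum_{i} a_{n_i}\zeta^{n_i}=-a_0$, and since $\bigl|\sum_{i} a_{n_i}\zeta^{n_i}\bigr|\le\sum_{i}|a_{n_i}|=|a_0|$, this is precisely an equality case of the triangle inequality. Equality forces each term $a_{n_i}\zeta^{n_i}$ to be a nonnegative real multiple of $-a_0$, that is $a_{n_i}\zeta^{n_i}=-|a_{n_i}|\,\sgn(a_0)$, equivalently $\zeta^{n_i}=-\sgn(a_0a_{n_i})$ for every $i$; conversely these relations sum back to $f(\zeta)=0$. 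Therefore the roots of unity among the zeros of $f$ are exactly the common zeros of the binomials $x^{n_i}+\sgn(a_0a_{n_i})$, i.e.\ the zeros of $D(x)=(x^{n_r}+\sgn(a_0a_{n_r}),\ldots,x^{n_1}+\sgn(a_0a_{n_1}))$. A derivative computation gives $f'(\zeta)=-\zeta^{-1}\sgn(a_0)\sum_{i} n_i|a_{n_i}|\ne 0$, so each such $\zeta$ is a simple zero; thus $f_c$ and $D$ are both monic and squarefree with the same roots, whence $f_c=D$, and the cyclotomic factors are distinct.

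I expect the main obstacle to be the product-of-moduli argument in the second paragraph: the real content is that the unit-constant-term factor is \emph{forced} to be monic up to sign and to have all its roots on the unit circle, which is exactly what licenses the application of Kronecker's theorem; without controlling the leading coefficient one could not pass from ``roots on the unit circle'' to ``roots of unity.'' The equality-in-the-triangle-inequality analysis is then essentially bookkeeping, but care is needed to run it in both directions so that $f_c$ and $D$ have identical (simple) zero sets.
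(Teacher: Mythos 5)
Your proposal is correct; I checked each step and found no gap. The key moves all work: the bound $|f(x)-a_0|\le\sum_i|a_{n_i}||x|^{n_i}<|a_0|$ for $|x|<1$ places every zero in $|z|\ge 1$; primitivity holds since a common prime divisor would have to be $p$ itself, contradicting $\sum_i|a_{n_i}|=p$ with $r\ge 2$; primality of $\prod_j|g_j(0)|=p$ isolates a unique irreducible factor with constant term $\pm p$; your product-of-moduli argument correctly forces every other factor to be monic up to sign with all zeros on the unit circle, hence equal to $\pm\Phi_m$ by Kronecker's theorem; the bound $|a_{n_r}|\le p-(r-1)<p$ rules out reciprocality of the remaining factor; the equality case of the triangle inequality characterizes the unit-circle zeros of $f$ as the common zeros of the binomials $x^{n_i}+\sgn(a_0a_{n_i})$, in both directions; and the derivative computation gives simplicity, hence distinctness of the cyclotomic factors and the identification $f_c=\gcd(x^{n_r}+\sgn(a_0a_{n_r}),\ldots,x^{n_1}+\sgn(a_0a_{n_1}))$.

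One caveat on the comparison itself: this survey does not prove Theorem~\ref{ourprimecase}; it only states it, citing \cite{biswajit3}, so there is no in-paper proof to measure yours against. Your route is, however, exactly the one the surrounding text suggests the original takes: the theorem is introduced as the equality case of the Panitopol--Stef\"anescu criterion (Theorem~\ref{panitopol1}), which is itself a location-of-zeros statement, and your argument is the natural sharpening of that proof at the boundary $\sum_i|a_{n_i}|=|a_0|$. Only one small point deserves an explicit sentence in a final write-up: to conclude that the zeros of $f_c$ are \emph{exactly} the roots of unity among the zeros of $f$, you need that $f_n=g_{j_0}$ has no root of unity as a zero. This does follow, because an irreducible primitive integer polynomial with a root of unity as a zero must equal $\pm\Phi_m$, whose constant term is $\pm 1\ne\pm p$; your remark that $f_n$ is ``non-cyclotomic because $|f_n(0)|=p>1$'' is this observation, but the inference from ``not a cyclotomic polynomial'' to ``no root of unity among its zeros'' uses irreducibility of $f_n$ and should be stated.
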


Theorem  \ref{ourprimecase} is not true if the constant term is not a prime number. For example, 
\begin{align}
& x^4+3x^2+4=(x^2-x+2)(x^2+x+2);\label{trieq1}\\
& 2x^3+11x^2+8x-21=(x-1)(x+3)(2x+7).\notag
\end{align}

From Theorem \ref{ourprimecase}, the polynomial $x^n+(p-1)\ep_1x^m+p\ep_2$ has only one non-reciprocal irreducible factor apart from its cyclotomic factors. By Eisenstein's criterion, if $b\equiv 0\pmod{p}$, then $x^n+b\ep_1x^m+p\ep_2$ is irreducible.   Therefore, combining all of them, one can see that
\begin{theorem}\label{constantprimesummary}
Let $p$ be a prime, $b\in \N$ and let $f(x)=x^n+b\ep_1x^m+p\ep_2$ be a polynomial of degree $n$. Then $f(x)$ is irreducible except possibly for 
\begin{equation*}
p-1\le b\le p^{n-1}+1,\quad  b\not\equiv 0\pmod{p}.
\end{equation*}
If $b=p-1$ and $f(x)$ is reducible, then $f(x)=f_c(x)f_n(x)$, where $f_c(x)=(x^n+\ep_2, x^m+\ep_1\ep_2)$ is the cyclotomic part of $f(x)$ and $f_n(x)\in \Z[x]$ is a non-reciprocal irreducible polynomial. 
\end{theorem}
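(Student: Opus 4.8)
The plan is to treat this statement as an assembly of the criteria already recorded above, partitioning the range of the middle coefficient $b$ into the regions where each applies. Since the assertion is that $f(x)=x^n+b\ep_1x^m+p\ep_2$ is irreducible unless $p-1\le b\le p^{n-1}+1$ and $p\nmid b$, it suffices to establish irreducibility in each of the three complementary regimes $b>p^{n-1}+1$, $p\mid b$, and $b<p-1$, and then to extract the finer factorization structure at the single boundary value $b=p-1$. Indeed, the complement of the exceptional set is precisely the union $\{b<p-1\}\cup\{b>p^{n-1}+1\}\cup\{p\mid b\}$, so these cases are exhaustive.

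For $b>p^{n-1}+1$ I would invoke Nagell's criterion (Theorem~\ref{nagell}) with $c=p$. Its first hypothesis $b>1+p^{n-1}$ is exactly the regime under consideration, and its second hypothesis holds vacuously, because a prime $p$ is never a perfect $d$-th power for $d>1$; hence $f(x)$ is irreducible. For $p\mid b$ every non-leading coefficient of $f$ is divisible by $p$ while $p^2\nmid p\ep_2$, so Eisenstein's criterion applies directly and $f(x)$ is irreducible. For $b<p-1$ the constant term dominates: the absolute values of the two remaining coefficients sum to $b+1<p=|a_0|$, and $|a_0|=p$ is prime, so Theorem~\ref{panitopol1} of Panaitopol and Stef\"{a}nescu yields irreducibility. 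These three regimes cover the entire complement of the stated exceptional set (note that $b=p-1$ automatically satisfies $p\nmid b$ since $p-1\equiv -1\pmod p$), which proves the first assertion.

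It remains to handle the boundary $b=p-1$, and this is the only delicate point, because here one has the equality $b+1=p=|a_0|$, so the strict hypothesis of Theorem~\ref{panitopol1} just fails and a sharper tool is needed. Here I would appeal to the equality-case result, Theorem~\ref{ourprimecase}, applied to $f(x)=x^n+(p-1)\ep_1x^m+p\ep_2$, whose non-constant coefficients have absolute values summing to $1+(p-1)=p=|a_0|$, a prime. That theorem then forces any factorization to be $f(x)=f_c(x)f_n(x)$ with $f_n(x)$ an irreducible non-reciprocal polynomial and $f_c(x)$ a product of distinct cyclotomic polynomials. Finally I would compute $f_c$ from the explicit formula of Theorem~\ref{ourprimecase}: for the leading term $\sgn(a_0\cdot 1)=\sgn(p\ep_2)=\ep_2$ gives the factor $x^n+\ep_2$, while for the middle term $\sgn\!\big(p\ep_2\cdot(p-1)\ep_1\big)=\ep_1\ep_2$ gives $x^m+\ep_1\ep_2$, so that $f_c(x)=(x^n+\ep_2,\,x^m+\ep_1\ep_2)$, as claimed. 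The main effort is thus purely bookkeeping; the one genuine subtlety is recognizing that $b=p-1$ lies on the equality threshold and must be routed through Theorem~\ref{ourprimecase} rather than Theorem~\ref{panitopol1}.
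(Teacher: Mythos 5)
Your proposal is correct and follows essentially the same route as the paper: Nagell's criterion (Theorem~\ref{nagell}) for $b>p^{n-1}+1$, Eisenstein's criterion for $p\mid b$, the Panaitopol--Stef\u{a}nescu criterion (Theorem~\ref{panitopol1}) for $b<p-1$, and the equality-case result (Theorem~\ref{ourprimecase}) at $b=p-1$, including the same sign computation yielding $f_c(x)=(x^n+\ep_2,\,x^m+\ep_1\ep_2)$. The only difference is presentational: you make the case partition and its exhaustiveness explicit, whereas the paper simply collects the previously stated criteria.
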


\item $\boldsymbol{k\ge 2}$\\

 For $m=1$, we have the irreducibility criterion given in Theorem \ref{bonciocattrinomial}. If $m=2$, then A.I. Bonciocat and N.C. Bonciocat provided a criterion similar to Theorem \ref{bonciocattrinomial}. 

\begin{theorem}[Bonciocat \& Bonciocat \cite{bonciocat1}]\label{bonciocattrinomial1}
Let $a,b,c\in \N$ and $p$ be a prime number, $p\nmid bc, r\ge 0, k\ge 1$. The polynomial $f(x)=ax^n+bp^r\ep_1x^2+p^kc\ep_2$ is irreducible if $k$ is odd and
\begin{equation*}
p^k>bcp^{3r}+c^{n-1}ap^{nr}.
\end{equation*}
\end{theorem}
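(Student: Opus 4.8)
The plan parallels the proof of the $m=1$ case (Theorem~\ref{bonciocattrinomial}): assume $f(x)=ax^n+bp^r\ep_1x^2+p^kc\ep_2$ is reducible, write $f=gh$ with $g,h\in\Z[x]$ non-constant of degrees $s,t$, and extract a contradiction from the interplay between the $p$-adic size of the roots and the parity of $k$. Set $\alpha=v_p(a)$. First I would record that, since $b,c\ge 1$, the hypothesis $p^k>bcp^{3r}+c^{n-1}ap^{nr}$ forces $k>3r$ and $k>nr+\alpha$, so $k$ dwarfs every other exponent in sight. Using this I would compute the Newton polygon of $f$ with respect to $p$, whose vertices are $(0,k)$, $(2,r)$ and $(n,\alpha)$; the middle point is a genuine vertex exactly because $k$ is large and $n\ge 3$. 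Hence the polygon has a short edge over $[0,2]$ of slope $(r-k)/2$ and a long edge over $[2,n]$, so over $\overline{\Q}_p$ the polynomial $f$ has two ``large'' roots $\theta_1,\theta_2$ with $v_p(\theta_i)=(k-r)/2>0$, together with $n-2$ roots of the much smaller valuation $(r-\alpha)/(n-2)$; equivalently, the left edge yields a quadratic factor of $f$ over $\Q_p$.

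Next I would feed in the parity of $k$. Comparing constant terms gives $v_p(g(0))+v_p(h(0))=v_p(f(0))=k$, and since $k$ is odd these two non-negative integers are distinct; say $v_p(h(0))\le (k-1)/2$. A count of valuations then shows that the factor $h$ with the smaller constant-term valuation cannot absorb the whole large cluster $\{\theta_1,\theta_2\}$, and when $(k-r)/2$ is non-integral the denominator-$2$ slope makes the quadratic on the left edge irreducible over $\Q_p$, so $\theta_1,\theta_2$ cannot be separated at all and must both lie in $g$. In either reading, one factor is pinned to consist essentially of the small-valuation roots (or of the large quadratic alone), which is precisely the configuration that the size bound is designed to exclude.

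The crux is converting this grouping into a genuine contradiction, and this is where the full strength of $p^k>bcp^{3r}+c^{n-1}ap^{nr}$ enters. Here I would pass to the archimedean side and use the location-of-zeros estimates compiled in Section~\ref{sec:lzt}: the same dichotomy holds for the usual absolute value, the two large roots having modulus of order $p^{(k-r)/2}$ and the remaining roots modulus of order $(bp^r/a)^{1/(n-2)}$. Bounding $|h(0)|$ (equivalently $|g(0)|=p^kc/|h(0)|$) from the roots that a genuine integer factor can contain, and matching this against the coefficients of $f$, the two summands of the bound should account respectively for the $\theta_1\theta_2$-cluster and for the $n-2$ small roots, forcing either a non-integral coefficient or a violation of the inequality. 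Carrying out this estimate so that the exponents $3r$ and $nr$ and the factor $c^{n-1}$ fall out exactly — and disposing of the ramified prime $p=2$ and the case $p\mid a$, where the $p$-adic and archimedean pictures are least aligned — is the main obstacle; the Newton-polygon bookkeeping and the parity step are comparatively routine.
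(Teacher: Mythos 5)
This survey only states the Bonciocat--Bonciocat result and cites \cite{bonciocat1} for it, so there is no in-paper proof to compare against; your proposal has to stand on its own, and it does not. Your $p$-adic scaffolding is correct as far as it goes: the hypothesis forces $k>3r$ and $k>nr+v_p(a)$, so $(2,r)$ is indeed a vertex of the Newton polygon, giving two roots of valuation $(k-r)/2$ and $n-2$ roots of valuation $(r-v_p(a))/(n-2)$; and $v_p(g(0))+v_p(h(0))=k$ odd does force an unequal split of the constant-term valuations --- exactly what fails in the even-$k$ counterexample $x^4+7x^2+2^4=(x^2-x+4)(x^2+x+4)$, where the split is $2+2$. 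But the rest of the plan breaks down in two concrete ways. First, the claimed archimedean dichotomy is simply false under this hypothesis: here it is the \emph{constant} term that dominates, not the middle coefficient, so for $|x|\le cp^r$ one has $|ax^n+bp^r\ep_1x^2|\le ac^np^{nr}+bc^2p^{3r}<cp^k$, and \emph{all} complex roots lie outside the disk of radius $cp^r$. There are not ``two roots of modulus of order $p^{(k-r)/2}$ and $n-2$ small ones''; the $2/(n-2)$ clustering is a purely $p$-adic phenomenon, and complex modulus is not aligned with $p$-adic valuation, so the location-of-zeros results of Section~\ref{sec:lzt} cannot be imported the way you intend.

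Second, the case you pass over with ``in either reading, one factor is pinned\dots'' is precisely the hard case. When $r$ is odd, $k-r$ is even, the short edge has an interior lattice point, and nothing prevents each factor from taking exactly one root of valuation $(k-r)/2$. Your bookkeeping does exclude the small-valuation factor $h$ containing both such roots (then $v_p(h(0))\ge k-r>(k-1)/2$), and the root-free disk above kills the case where $h$ contains neither (then $v_p(h(0))\le r$, so $|h(0)|\le cp^r$, contradicting $|h(0)|>(cp^r)^{\deg h}\ge cp^r$); but in the one--one split both constant terms have valuation at least $(k-r)/2$, and the naive modulus bound only gives $|g(0)h(0)|>(cp^r)^n=c^np^{nr}$, which is not a contradiction --- the hypothesis $p^k>c^{n-1}ap^{nr}$ actually \emph{implies} $cp^k>(cp^r)^n$. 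So the configuration that genuinely requires the full strength of the inequality, with the exponents $3r$, $nr$ and the factor $c^{n-1}$, is exactly the one left untreated, and you concede as much by labeling it ``the main obstacle.'' That obstacle is the theorem; together with the deferred cases $p\mid a$ and $p=2$, what you have is a plausible opening move (Newton polygon plus the parity of $k$) followed by a misreading of where the roots live and no derivation of the contradiction, i.e.\ a plan rather than a proof.
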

The theorem is not true if $k$ is an even integer. For example, let $f(x)=x^4+7x^2+2^4$. Then $a=1=c, b=7, n=k=4, p=2, r=0$ and $2^4>7+1$. However, $f(x)=(x^2-x+4)(x^2+x+4)$. 

When $k$ is an even integer,  Jonassen \cite{jonassen} studied trinomials of the form $x^n+\ep_1 x^m+4\ep_2$. By applying a refinement of Ljunggren's method, he proved that

\begin{theorem}[Jonassen \cite{jonassen}]\label{jonassen}
Suppose $n>m$ are positive integers. Then the polynomial $x^n+\ep_1 x^m+4\ep_2$ is irreducible except in the following cases:
\begin{align}
x^{3k}+\ep_1x^{2k}+4\ep_1 &=(x^k+2\ep_1)(x^{2k}-\ep_1x^k+2);\notag\\
x^{5k}+\ep_1x^{2k}-4\ep_1 &=(x^{3k}+\ep_1x^{2k}-x^k-2\ep_1)(x^{2k}-\ep_1x^k+2); \notag\\
x^{11k}+\ep_1x^{4k}+4\ep_1 &=(x^{5k}-x^{3k}-\ep_1x^{2k}+2\ep_1)(x^{6k}+x^{4k}+\ep_1x^{3k}+x^{2k}+2),\notag
\end{align}
for any $k\ge 1$ and the factors on the right hand side are irreducible in each cases.
\end{theorem}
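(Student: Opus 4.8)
The plan is to use Ljunggren's norm method, the same machinery behind Theorems~\ref{ljunggrentrinomial} and \ref{biswajit2trinomial}, now driven by the arithmetic input $\|f\|_2^2 = 1 + 1 + 16 = 18$. Suppose $f(x) = x^n + \ep_1 x^m + 4\ep_2$ admits a nontrivial factorization $f = f_1 f_2$ into monic integer polynomials; since $f(0) = 4\ep_2 \ne 0$, both factors have nonzero constant term, so $\deg\tilde f_1 = \deg f_1$. Following Ljunggren, set $g = \tilde f_1 f_2$. Then $\deg g = n$, a direct degree computation gives $\tilde g = f_1 \tilde f_2$, and hence
\begin{equation*}
g\,\tilde g = (\tilde f_1 f_2)(f_1 \tilde f_2) = (f_1 f_2)(\tilde f_1 \tilde f_2) = f\,\tilde f.
\end{equation*}
Because $\tilde g$ is the reciprocal of the degree-$n$ polynomial $g$, the coefficient of $x^n$ in $g\tilde g$ equals $\sum_j g_j^2 = \|g\|_2^2$; comparing with the coefficient of $x^n$ in $f\tilde f$, which is $\|f\|_2^2 = 18$, yields the key identity $\|g\|_2^2 = 18$. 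So every nontrivial factorization produces a degree-$n$ integer polynomial $g$ with $\sum_j g_j^2 = 18$, together with the boundary data $g_n = f_1(0)$, $g_0 = f_2(0)$, and $g_n g_0 = f(0) = 4\ep_2$.

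Next I would exploit the sparseness of the target. Expanding,
\begin{equation*}
f\tilde f = 4\ep_2 x^{2n} + \ep_1 x^{2n-m} + 4\ep_1\ep_2 x^{n+m} + 18 x^n + 4\ep_1\ep_2 x^{n-m} + \ep_1 x^m + 4\ep_2,
\end{equation*}
a polynomial with at most seven nonzero terms. Since $g\tilde g$ must reproduce this, every convolution coefficient of $g\tilde g$ at an exponent outside the set $\{0, m, n-m, n, n+m, 2n-m, 2n\}$ must vanish. Feeding in the finitely many coefficient multisets admissible for $g$ (those with $\sum_j g_j^2 = 18$ and with $g_n g_0 = 4\ep_2$, so $\{g_n,g_0\}$ is a factorization of $4$), these vanishing conditions become a finite Diophantine system whose solutions I would enumerate. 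Each surviving $g$ determines the pair $\{\tilde f_1, f_2\}$, and reconstructing $f_1,f_2$ and matching exponents pins down the admissible shapes; this is where the three exceptional families emerge.

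The parameter $k$ comes from reduction to coprime exponents. Writing $d = (n,m)$, $n = n_1 d$, $m = m_1 d$ with $(n_1,m_1) = 1$, one has $f(x) = F(x^d)$ with $F(y) = y^{n_1} + \ep_1 y^{m_1} + 4\ep_2$; since the substitution $y \mapsto x^d$ preserves the coefficient multiset, the norm analysis above is effectively carried out for the primitive trinomial $F$, after which one substitutes $y = x^d$ (with $d = k$). Classifying the finitely many reducible primitive $F$ gives precisely $(n_1,m_1) = (3,2), (5,2), (11,4)$, with the sign relation between $\ep_1$ and $\ep_2$ dictated by the constant term. Finally I would confirm that the exhibited cofactors (for instance $x^{2k} - \ep_1 x^k + 2$ and the degree-$6k$ factor) are themselves irreducible, by Perron's criterion applied to their reciprocals or by a further norm argument.

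I expect the main obstacle to be the combinatorial explosion in the middle step: the equation $\sum_j g_j^2 = 18$ has a large number of integer coefficient patterns, and eliminating all but the three genuine ones requires carefully combining the vanishing of the non-sparse convolution coefficients with the boundary constraints $g_n g_0 = 4\ep_2$ and $\deg g = n$. Additional care is needed for the degenerate orderings of exponents---when $n \le 2m$, when $m = n - m$, or when two terms of $f\tilde f$ collide---since these change which convolution coefficients are forced to vanish, and one must also justify the lift $y \mapsto x^d$ so that no factorization is lost. Handling this bookkeeping uniformly in $n$ and $m$, rather than case by case, is the delicate part; Jonassen's refinement of Ljunggren's method is exactly the device that organizes it.
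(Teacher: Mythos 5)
Your starting point is the right one: the paper itself does not prove this result (it is a survey; it cites Jonassen and describes his proof only as ``a refinement of Ljunggren's method''), and your norm setup --- $g=\tilde f_1f_2$, $\tilde g=f_1\tilde f_2$, $g\tilde g=f\tilde f$, $\|g\|_2^2=\|f\|_2^2=18$, and the seven-term expansion of $f\tilde f$ --- is correct and is indeed the machinery behind Theorems~\ref{ljunggrentrinomial} and \ref{biswajit2trinomial}. But one step of your plan is not deferred bookkeeping; it is false. You propose to run the analysis on the primitive trinomial $F(y)=y^{n_1}+\ep_1y^{m_1}+4\ep_2$ and then ``substitute $y=x^d$,'' claiming only that the lift needs to be ``justified.'' It cannot be justified in that form: irreducibility of $F(y)$ does not imply irreducibility of $F(x^d)$, because factorizations of $F(x^d)$ need not come from factorizations of $F$. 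This survey itself turns on that phenomenon: $y^2+y+1$ is irreducible while $x^4+x^2+1=(x^2+x+1)(x^2-x+1)$, and the errors in Ljunggren's original statement corrected in Theorem~\ref{biswajit2trinomial} are exactly of this kind. The whole content of Jonassen's theorem is that no new factorizations appear at imprimitive exponents, so the norm argument must be carried out on $f$ itself, uniformly in $n,m$ (which is why the exceptional families come out parametrized by $k=(n,m)$); assuming the reduction is assuming a large part of the conclusion.

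A second genuine gap: the system $g\tilde g=f\tilde f$, $\|g\|_2^2=18$ always admits the trivial solutions $g=\pm f$ and $g=\pm\tilde f$, and for these your step ``each surviving $g$ determines the pair $\{\tilde f_1,f_2\}$'' breaks down --- they say only that $\tilde f_1=\pm f_1$ (or $\tilde f_2=\pm f_2$), i.e.\ that $f$ might have a factor which is reciprocal up to sign. Such a factor has constant term $\pm1$, which is perfectly compatible with $f(0)=\pm4$, so nothing in the enumeration excludes it; a separate argument is needed for this case (for constant term $\pm1$ this is where Ljunggren and Tverberg invoke roots of unity, and for constant term $\pm4$ it is precisely where Jonassen's refinement lives). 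Your sketch never addresses it. Finally, two smaller points: the central Diophantine enumeration, which is the actual body of Jonassen's proof, is only promised, and the concluding irreducibility check of the cofactors cannot be done ``by Perron's criterion applied to their reciprocals'' --- the reciprocal of $x^{2k}-\ep_1x^k+2$ is not monic, and for $k>1$ the dominant coefficient is not in the subleading position, so Perron is inapplicable. As it stands, the proposal outlines the correct method but does not constitute a proof.
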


If $c=1, r=0$ in both Theorem \ref{bonciocattrinomial} and Theorem \ref{bonciocattrinomial1}, then the conditions read $p^k>a+b$, which is similar to that of Theorem \ref{panitopol1}. Because of this reason, we considered  polynomials of the form $f(x)=a_nx^n+\cdots+a_3x^3+p^k\ep,$ where $p$ is a prime number, $p\nmid a_3a_n$ and $k\ge 2$ in \cite{biswajit4}. To frame the results of \cite{biswajit4} conveniently, we introduce the following sets: Let $n>m>0, a,b\in \N$ and $p$ be a prime number. We define 
$$\S_m=\{\, ax^n+b\ep_1x^m+p^k\ep_2\mid p\nmid ab, k\ge 2, \ep_1,\ep_2\in\{\, -1, +1\,\}\,\}$$

 and for $m\ge 2, \, \S_m'=\{\, f\in \S_m\mid k\not\equiv 0\pmod{m}\,\}$. Then it has been shown that 

\begin{theorem}[Koley \& Reddy \cite{biswajit4}]
Let $f(x)=ax^n+b\ep_1x^3+p^k\ep_2\in \S_3'$ be a trinomial of degree $n\ge 4$. If $p^k>a+b$, then $f(x)$ is irreducible. 
\end{theorem}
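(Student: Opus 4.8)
The plan is to combine a location-of-zeros estimate with the $p$-adic Newton polygon of $f$. The inequality $p^k>a+b$ forces every complex root of $f$ to lie outside the closed unit disk: if $\alpha\in\C$ and $f(\alpha)=0$, then $p^k=|a\alpha^n+b\ep_1\alpha^3|\le a|\alpha|^n+b|\alpha|^3$, so $|\alpha|\le 1$ would give $p^k\le a+b$, a contradiction; hence every root satisfies $|\alpha|>1$. The arithmetic hypotheses $p\nmid ab$ and $3\nmid k$ will then be used, through the Newton polygon, to control the $p$-adic valuation of the constant term of any factor.

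Suppose for contradiction that $f=gh$ with $g,h\in\Z[x]$ nonconstant. Since $f$ has content $1$ we may take $g,h$ primitive, so $a_1a_2=a$ and $g(0)h(0)=p^k\ep_2$, where $a_1,a_2$ denote the leading coefficients of $g,h$; as $p\nmid a$, both are prime to $p$. I would then read off the Newton polygon of $f$ at $p$: the coefficient valuations are $v_p(a)=0$ at $x^n$, $v_p(b\ep_1)=0$ at $x^3$ (here $p\nmid b$ is essential), and $v_p(p^k\ep_2)=k$ at $x^0$, so the lower hull has an edge from $(0,k)$ to $(3,0)$ of slope $-k/3$ and an edge from $(3,0)$ to $(n,0)$ of slope $0$. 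Consequently, over $\overline{\Q}_p$, exactly three roots of $f$ have valuation $k/3$ and the remaining $n-3$ have valuation $0$.

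The decisive step is an integrality argument. If $g$ contains $r$ of the three valuation-$k/3$ roots ($0\le r\le 3$), then, $a_1$ being a $p$-adic unit, $v_p(g(0))$ equals the sum of the valuations of the roots of $g$, namely $rk/3$. Since $v_p(g(0))\in\Z$ and $\gcd(k,3)=1$, we get $3\mid r$, so $r\in\{0,3\}$: the three small roots are never separated. Relabelling so they lie in $g$ gives $v_p(g(0))=k$, hence $v_p(h(0))=0$, and since $h(0)$ divides $p^k\ep_2$ this forces $|h(0)|=1$. But writing $h(x)=a_2\prod_j(x-\gamma_j)$ over $\C$, each $\gamma_j$ is a root of $f$, so $|\gamma_j|>1$; as $h$ is nonconstant and $|a_2|\ge 1$ we get $|h(0)|=|a_2|\prod_j|\gamma_j|>1$, a contradiction. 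Hence $f$ is irreducible.

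The main obstacle is the middle step, ruling out a splitting of the three valuation-$k/3$ roots between $g$ and $h$; this is exactly where $3\nmid k$ is indispensable, since it makes $-k/3$ have denominator $3$ in lowest terms and leaves only $r=0,3$, whereas if $3\mid k$ the edge would carry interior lattice points and a $(1,2)$ split could not be excluded by valuations alone. A careful write-up must also verify the bookkeeping: that $n\ge 4$ makes $f$ a genuine trinomial whose Newton polygon has the stated two edges (so that $n-3\ge 1$ valuation-$0$ roots exist), and that $a_1,a_2$ are $p$-adic units, both of which follow from $p\nmid a$ and $n\ge 4$.
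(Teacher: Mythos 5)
Your proof is correct: the bound $p^k>a+b$ pushes every complex root of $f$ outside the closed unit disk, and the $p$-adic Newton polygon (two edges, from $(0,k)$ to $(3,0)$ and from $(3,0)$ to $(n,0)$, valid since $p\nmid ab$ and $n\ge 4$) together with $3\nmid k$ correctly forces the three valuation-$k/3$ roots into a single factor, so the complementary factor would have constant term $\pm 1$, contradicting the root location. This is essentially the same approach as the original argument for this theorem in \cite{biswajit4}; the survey itself only states the result without reproducing a proof, and your write-up, including the remark on why $3\mid k$ would break the argument (consistent with the counterexample $x^4+4\ep x^3+27$ given in the paper), is complete and accurate.
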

The condition $p\nmid ab$ in the definition of $\S_m$ cannot be dropped. For example, 
\begin{equation*}
2x^4-3x^3-81=(x-3)(2x^3+3x^2+9x+27).
\end{equation*}
Further, the necessity of the condition $3\nmid k$ can be seen from the following examples:
\begin{align}
& x^4+4\ep x^3+27=(x+3\ep)^2(x^2-2\ep x+3);\label{trioour1}\\
& 2x^5-3x^3+27=(x^2-3x+3)(2x^3+6x^2+9x+6).\notag
\end{align}

Later we consider the case $p^k=a+b$ for $f(x)=ax^n+b\ep_1x^m+p^k\ep_2, m\le 3$ and proved that 

\begin{theorem}[Koley \& Reddy \cite{biswajit4}]\label{primepower}
Let $f(x)=ax^n+b\ep_1x^m+p^k\ep_2\in \S_1 \cup \S_2' \cup \S_3'$ be a polynomial of degree $n$. If $p^k=a+b$ and $f(x)$ is reducible, then  $f(x)=f_c(x)f_n(x),$ where $f_c(x)=(x^n+\ep_2, x^m+\ep_1\ep_2)$ is the product of all cyclotomic factors of $f$ and $f_n(x)$ is an irreducible non-reciprocal polynomial. In particular, $f(x)$ is irreducible except for the following families of polynomials:

\begin{tabular}{|l|l|}
\hline
$f(x) $ & $f_c(x)$\\
\hline
$ax^{2u+1}+bx+p^k\ep_2 $ & $x+\ep_2$\\[5pt]
\hline
$ax^{2u}-bx-p^k$ & $ x+1$\\[5pt]
\hline
$ax^{2(2u+1)}+bx^2+p^k\ep_2$ & $ x^2+\ep_2$\\[5pt]
\hline
$ax^{2u+1}-bx^2+p^k$ & $ x+1$\\[5pt]
\hline
\end{tabular}
\quad
\begin{tabular}{|l|l|}
\hline
$f(x)$ & $f_c(x)$\\
\hline
$ax^{2^{2t}(2u+1)}-bx^2-p^k$ & $x^2+1$\\[5pt]
\hline
$ax^{2(2u+1)}+bx^2+p^k\ep_2$  & $ x^2+\ep_2$\\[5pt]
\hline
$ax^{2u+1}+bx^3+p^k\ep_2$ & $x^{(2u+1,3)}+\ep_2$\\[5pt]
\hline
$ax^{2^t(2u+1)}-bx^3-p^k$ & $x^{(2u+1,3)}+1$\\[5pt]
\hline
\end{tabular}\\

for every $u, t\ge 1$. 
\end{theorem}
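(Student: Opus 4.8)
The plan is to peel off all cyclotomic factors using the boundary condition $p^k=a+b$ on the unit circle, then to show the complementary factor $f_n$ is irreducible and non-reciprocal, and finally to read off the two tables by a direct $\gcd$ computation. First I would analyze the roots on the unit circle. If $|\alpha|=1$ and $f(\alpha)=0$, then $|a\alpha^n+b\ep_1\alpha^m|=p^k=a+b=|a\alpha^n|+|b\ep_1\alpha^m|$, so equality holds in the triangle inequality; this forces $\alpha^{n-m}=\ep_1$, hence $\alpha^m=-\ep_1\ep_2$ and $\alpha^n=-\ep_2$. Conversely, any $\alpha$ with $\alpha^n=-\ep_2$ and $\alpha^m=-\ep_1\ep_2$ satisfies $f(\alpha)=\ep_2(p^k-a-b)=0$. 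Thus the roots of $f$ on the unit circle are exactly the roots of $f_c=(x^n+\ep_2,\,x^m+\ep_1\ep_2)$; since $f_c$ divides $x^{2n}-1$ it is squarefree, it divides $f$, it is a product of cyclotomic polynomials, and it collects every cyclotomic factor of $f$. Writing $f=f_cf_n$, the factor $f_n$ has no root on the unit circle. Moreover $f_n(0)=\pm p^k$ and the leading coefficient of $f_n$ is $a$, so the product of the roots of $f_n$ has absolute value $p^k/a>1$ (as $0<a<p^k$), whereas a reciprocal polynomial has root-product of absolute value $1$; hence $f_n$ is non-reciprocal.

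Next I would rule out reciprocal \emph{factors} of $f_n$ by studying $\gcd(f,\tilde f)$, where $\tilde f=p^k\ep_2x^n+b\ep_1x^{n-m}+a$. A nonzero $\beta$ is a common root of $f$ and $\tilde f$ precisely when both $\beta$ and $1/\beta$ are roots of $f$. A short elimination of $\beta^n$ and $\beta^{n-m}$ between the two relations, using $p^{2k}-a^2=b(p^k+a)$ and $a+b=p^k$, collapses the two conditions to $\ep_1\ep_2(\beta^m+\beta^{-m})+2=0$, i.e. $(\beta^m+\ep_1\ep_2)^2=0$, so $\beta^m=-\ep_1\ep_2$ and then $\beta^n=-\ep_2$. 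Therefore every common root of $f$ and $\tilde f$ lies on the unit circle (it is in fact a root of $f_c$), so $f$ has no reciprocal pair of roots off the unit circle. Since $f_n$ has no unit-circle roots, it has no reciprocal factor and $\gcd(f_n,\tilde f_n)=1$; in particular every irreducible factor of $f_n$ is non-reciprocal.

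The hard part will be proving $f_n$ is actually irreducible, i.e. that it has exactly one irreducible factor. Here I would use the Newton polygon of $f$ at $p$: because $p\nmid ab$ its relevant vertices are $(0,k)$, $(m,0)$, $(n,0)$, and for $m\in\{2,3\}$ the hypothesis $k\not\equiv0\pmod m$ gives $\gcd(k,m)=1$ (automatic for $m=1$), so the edge from $(0,k)$ to $(m,0)$ is a single segment of denominator $m$. This yields a factor of $f$ that is irreducible of degree $m$ over $\Q_p$ and carries the entire valuation $k$ of the constant term, while the cyclotomic roots, of valuation $0$, lie in the complementary $\Q_p$-factor. Consequently, in any factorization $f_n=GH$ over $\Z$ one factor absorbs all the valuation and the other, say $H$, has $H(0)=\pm1$. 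This places us in the same final configuration as the prime case of Theorem \ref{ourprimecase}; to finish I would invoke Ljunggren's method, using that $g=\tilde H\,G$ satisfies $g\tilde g=f_n\tilde f_n$ together with the small Euclidean norm $\|f\|_2^2=a^2+b^2+p^{2k}=2(a^2+ab+b^2)$ and the coprimality $\gcd(f_n,\tilde f_n)=1$ established above, to show no nontrivial $G,H$ can occur. I expect this Ljunggren-norm step, rather than the unit-circle or reciprocal analysis, to be the genuine obstacle, and it is also where the restriction $m\le 3$ enters (for composite $m\ge4$ the hypothesis $k\not\equiv0\pmod m$ no longer forces $\gcd(k,m)=1$, so the Newton polygon need not isolate a single degree-$m$ factor).

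Finally I would produce the tables by computing $f_c=(x^n+\ep_2,\,x^m+\ep_1\ep_2)$ explicitly for $m=1,2,3$. A root of $x^m+\ep_1\ep_2$ contributes to $f_c$ exactly when it also satisfies $x^n=-\ep_2$, so the answer depends only on $n\bmod 2$, on the $2$-adic valuation of $n$ (which governs when $\pm i$ enter for $m=2$, accounting for the powers of two such as $2^{t}$ and $2^{2t}$ in the families), and on $\gcd(n,3)$ for $m=3$ (accounting for the exponent $(2u+1,3)$). Listing the nontrivial values of $f_c$ up to the substitution $x\mapsto -x$ (and $f\mapsto\tilde f$ where it applies) yields precisely the families in the two tables; in every remaining case $f_c=1$, so $f=f_n$ is irreducible.
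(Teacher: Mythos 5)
Your skeleton is largely sound, and several pieces check out completely: the equality-case analysis identifying the unit-circle roots of $f$ with the roots of $(x^n+\ep_2,x^m+\ep_1\ep_2)$; the elimination showing that a common root $\beta$ of $f$ and $\tilde f$ satisfies $(\beta^m+\ep_1\ep_2)^2=0$ (your use of $p^{2k}-a^2=b(p^k+a)$ is correct); the non-reciprocity of $f_n$ via $|f_n(0)|=p^k>a$; the Newton-polygon argument that, since $\gcd(k,m)=1$ for $m\le 3$ under the hypothesis $k\not\equiv 0\pmod m$, any nontrivial factorization $f_n=GH$ over $\Z$ forces one factor, say $H$, to have $H(0)=\pm1$; and the routine gcd computations behind the tables. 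But there is a genuine gap at exactly the point you flag yourself: the irreducibility of $f_n$, which is the heart of the theorem, is never proved. Deferring it to ``Ljunggren's method'' is not a proof, and that route is unlikely to work as described: Ljunggren's norm comparison derives its power from $\|f\|_2^2$ being a small absolute constant (e.g.\ $3$ or $4$ for trinomials with $\pm1$ coefficients), whereas here $\|f\|_2^2=2(a^2+ab+b^2)$ is unbounded, so comparing coefficients of $g\tilde g=f_n\tilde f_n$ leaves infinitely many configurations you cannot rule out. A second, smaller gap: to write $f=f_cf_n$ with $f_n$ having \emph{no} root on the unit circle, you need the unit-circle roots of $f$ to be simple; squarefreeness of $f_c$ alone does not give this.

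Both gaps close with an observation you essentially already have. The same equality $p^k=a+b$ used on $|z|=1$ shows every root of $f$ satisfies $|z|\ge 1$: if $|z|<1$ then $|az^n+b\ep_1z^m|\le a|z|^n+b|z|^m<a+b=p^k$, so $f(z)\ne 0$. Simplicity of unit-circle roots follows since $f(\alpha)=f'(\alpha)=0$ with $|\alpha|=1$ gives $\alpha^{n-m}=\ep_1$ and $an\alpha^{n-m}=-b\ep_1m$, hence $an=-bm$, impossible. Therefore every root of $f_n$ lies \emph{strictly} outside the unit circle. Now combine this with your Newton-polygon conclusion: if $f_n=GH$ nontrivially over $\Z$ with $H(0)=\pm1$, then $|H(0)|$ equals the absolute value of the leading coefficient of $H$ (a nonzero integer) times the product of the moduli of the roots of $H$, each of which exceeds $1$; hence $|H(0)|>1$, a contradiction. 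So $f_n$ is irreducible, with no norm comparison needed. (This also yields the non-reciprocity of $f_n$ and of all its factors instantly, making your $\gcd(f,\tilde f)$ elimination, though correct, superfluous.) Note that the survey itself states this theorem with a citation to the authors' paper and gives no proof, so the comparison here is against what a complete argument requires rather than against a proof printed in the text.
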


Example \eqref{trieq1} shows that Theorem \ref{primepower} is not true for $f(x)\in \S_2\setminus \S_2'$. The following example illustrate that it is neither true for $f(x)\in \S_3\setminus \S_3'$ as well
\begin{equation*}
3x^4-5x^3-8=(x+1)(x-2)(3x^2-2x+4).
\end{equation*}

\end{enumerate}

\subsection{Reducibility of $\boldsymbol{ax^n+b\ep_1x^m+c\ep_2}$}
Suppose $f(x)=ax^n+b\ep_1x^m+c\ep_2$ is a trinomial of degree $n$, where $a,b,c\in \N$. If $a, c>1$, then there are very few results available related to the reducibility criterion.  Theorem \ref{bonciocattrinomial} and Theorem  \ref{bonciocattrinomial1} gives an irreducibility criterion when the constant term is divisible by a large prime power. A. Schinzel \cite{AS} found the conditions on coefficients so that an arbitrary trinomial is divisible by a cyclotomic polynomial. 

\begin{theorem}[Schinzel \cite{AS}]\label{schinzeltrinomialcyclo}
Let $a,b,c$ are non-zero integers, $0<m<n, d=(m,n), n=dn_1, m=dm_1$. If $f_c(x)$ is the cyclotomic part of $f(x)=ax^n+bx^m+c$, then 
\footnotesize{
\begin{equation*}
f_c(x)=\begin{cases}
x^{2d}+\ep_1^{m_1+n_1}\ep_2^{m_1}x^d+1, &\mbox{ if $c=\ep_1a=\ep_2b, n_1+m_1\equiv 0\pmod{3}, \ep_1^{m_1}=\ep_2^{n_1}$;}\\
(x^d-(-\ep_1)^{m_1}\ep_1\ep_2)^2, &\mbox{ if $c=\ep_1a+\ep_2b, (-\ep_1)^{m_1}=(-\ep_2)^{n_1}, an\ep_1+bm\ep_2=0$;}\\
x^d-(-\ep_1)^{m_1}\ep_1\ep_2,  &\mbox{ if $c=\ep_1a+\ep_2b, (-\ep_1)^{m_1}=(-\ep_2)^{n_1}, an\ep_1+bm\ep_2\ne 0$;}\\
1 &\mbox{ otherwise.}
\end{cases}
\end{equation*}
}
\end{theorem}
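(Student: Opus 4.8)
The plan is to read off $f_c$ by determining exactly which roots of unity are zeros of $f(x)=ax^n+bx^m+c$. This is the right reduction because a cyclotomic polynomial $\Phi_k$ divides $f$ precisely when some primitive $k$-th root of unity is a root of $f$; since $\Phi_k$ is the (irreducible over $\Q$) minimal polynomial of such a root, one root forces all of them, and $f_c(x)=\prod(x-\zeta)$ ranges over all roots of unity $\zeta$ that are zeros of $f$, counted with multiplicity. So everything comes down to solving $a\zeta^n+b\zeta^m+c=0$ in roots of unity $\zeta$.

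First I would extract the rigidity of a single such solution. Writing $u=\zeta^n$ and $v=\zeta^m$, both of modulus $1$, and treating $a,b,c$ as the real numbers they are, the relations $bv=-(c+au)$ and $au=-(c+bv)$ give, after taking moduli,
\begin{equation*}
\cos(\arg u)=\frac{b^2-a^2-c^2}{2ac},\qquad \cos(\arg v)=\frac{a^2-b^2-c^2}{2bc}.
\end{equation*}
Both right-hand sides are rational, whereas $\arg u$ and $\arg v$ are rational multiples of $2\pi$. By Niven's theorem the only rational cosines of rational angles are $0,\pm\tfrac12,\pm1$, so each of $\zeta^n,\zeta^m$ is a root of unity of order in $\{1,2,3,4,6\}$. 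The order-$4$ value ($\cos=0$, i.e. $u=\pm i$) is then ruled out: comparing imaginary parts in $au+bv+c=0$ forces $\operatorname{Im}(v)=\mp a/b$ to be rational, which via Niven applied to $v$ leaves only $\operatorname{Im}(v)\in\{0,\pm1\}$, and each contradicts $abc\ne0$ together with the cosine formulas. Hence $\zeta^n$ and $\zeta^m$ have order dividing $6$.

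Next I would descend to $\omega=\zeta^d$. From $(\zeta^n)^6=(\zeta^m)^6=1$ we get $\omega^{6n_1}=\omega^{6m_1}=1$, and since $(n_1,m_1)=1$ this yields $\omega^6=1$; this is exactly why $f_c$ is a polynomial in $x^d$ dividing $x^{6d}-1$. The equation becomes $a\omega^{n_1}+b\omega^{m_1}+c=0$ with $\omega$ a sixth root of unity, and I would run through the finitely many values of $\omega$. When $\omega=\pm1$ the equation is the real relation $\ep_1 a+\ep_2 b+c=0$, i.e. $c=\ep_1a+\ep_2 b$, producing the linear factor $x^d\mp1$ of cases (iii)/(ii). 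When $\omega$ is a primitive cube or sixth root, separating real and imaginary parts forces $\omega^{n_1}$ and $\omega^{m_1}$ to be a conjugate pair of nontrivial roots — whence $n_1+m_1\equiv0\pmod3$ — together with $|a|=|b|=|c|$, i.e. $c=\ep_1a=\ep_2 b$; the corresponding roots are exactly those of $\Phi_3(x^d)$ or $\Phi_6(x^d)$, giving the quadratic factor $x^{2d}+\ep_1^{m_1+n_1}\ep_2^{m_1}x^d+1$ of case (i). Finally, to detect the squared factor I would bring in the derivative: $\zeta$ is a multiple root iff $f'(\zeta)=0$, i.e. $an\zeta^n+bm\zeta^m=0$, which in the $\omega=\pm1$ regime collapses to $an\ep_1+bm\ep_2=0$, precisely the dichotomy separating $(x^d-(-\ep_1)^{m_1}\ep_1\ep_2)^2$ from $x^d-(-\ep_1)^{m_1}\ep_1\ep_2$. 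If none of these configurations occurs, $f$ has no cyclotomic root and $f_c=1$.

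The conceptual core is entirely the Niven-theorem step, which pins the admissible roots of unity down to sixth roots; after that the result is forced. I expect the genuine work to lie in the bookkeeping of the $\omega=\pm1$ branch: one must verify that no cyclotomic factor appears to a power higher than two, that the $\Phi_3$ versus $\Phi_6$ alternative is governed exactly by the displayed middle sign $\ep_1^{m_1+n_1}\ep_2^{m_1}$, and that the parities of $m_1,n_1$ assemble into the precise constant $(-\ep_1)^{m_1}\ep_1\ep_2$. This is careful sign-and-parity chasing rather than a conceptual obstacle.
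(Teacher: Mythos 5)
The paper never proves this statement: it is a survey, and Theorem~\ref{schinzeltrinomialcyclo} is quoted from Schinzel \cite{AS} with a citation only, so there is no in-paper argument to compare yours against. Judged on its own, your outline is correct and would deliver the stated formula. The main steps check out: the law-of-cosines computation makes $\operatorname{Re}(\zeta^n)$ and $\operatorname{Re}(\zeta^m)$ rational, Niven restricts the orders to $\{1,2,3,4,6\}$, order $4$ is excluded by the imaginary-part argument, and Bézout with $(n_1,m_1)=1$ descends to $\omega=\zeta^d$ with $\omega^6=1$; since $f(\zeta)$ depends only on $\zeta^d$, each admissible $\omega$ contributes the full factor $x^d-\omega$, and the derivative criterion $an\zeta^n+bm\zeta^m=0$ (together with the fact that $f'$ has only simple nonzero roots, so multiplicities never exceed $2$) governs the squared case. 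Moreover, the sign conditions in the theorem that you treat as bookkeeping are exactly the Bézout-consistency conditions for recovering $\omega$ from the pair $(\omega^{n_1},\omega^{m_1})$: in cases (ii)/(iii) one needs $\omega^{n_1}=-\ep_1$, $\omega^{m_1}=-\ep_2$ to be simultaneously solvable, which is precisely $(-\ep_1)^{m_1}=(-\ep_2)^{n_1}$, and in case (i) the analogous consistency gives $\ep_1^{m_1}=\ep_2^{n_1}$, so the route is forced to reproduce the exact statement.

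Two imprecisions are worth flagging, though neither is a genuine gap. First, Niven's theorem for the sine leaves $\operatorname{Im}(v)\in\{0,\pm\tfrac12,\pm1\}$, not $\{0,\pm1\}$; the value $\pm\tfrac12$ is ruled out only by combining with the already-established rationality of $\operatorname{Re}(v)$ (you do invoke the cosine formulas, but the set you wrote is not what Niven alone gives). Second, in case (i) it is not $\omega^{n_1}$ and $\omega^{m_1}$ themselves but $\ep_1\omega^{n_1}$ and $\ep_2\omega^{m_1}$ that form the conjugate pair of primitive cube roots of unity: when $b=-a$ one gets $\omega^{m_1}=-\overline{\omega^{n_1}}$ rather than $\omega^{m_1}=\overline{\omega^{n_1}}$, although your conclusions $n_1+m_1\equiv 0\pmod 3$ and $|a|=|b|=|c|$ survive. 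Finally, a complete write-up must also check mutual exclusivity of the cases (e.g. $|c|=|a|=|b|$ is incompatible with $c=\pm a\pm b$ for nonzero integers, and $\omega=1$ and $\omega=-1$ cannot both be roots since $(n_1,m_1)=1$), and that in case (i) only one conjugate pair, hence only one of $\Phi_3(x^d)$, $\Phi_6(x^d)$, can occur; all of this is more of the same sign-and-parity chasing and does go through.
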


Schinzel also proved that there are only finitely many trinomials $ax^n+bx^m+c$ whose non-cyclotomic part is reducible. More precisely, 

\begin{theorem}[Schinzel \cite{AS}]\label{schinzeltrinomial}
For any non-zero integers $a,b,c$ there exist two effectively computable constants $A(a,b,c)$ and $B(a,b,c)$ such that if $n>m>0$ and the non-cyclotomic part of $ax^n+b^m+c$ is reducible, then
\begin{enumerate}[label=(\alph*)]
\item $\frac{n}{(n,m)}\le A(a,b,c)$,
\item there exists integers $\alpha$ and $\beta$ such that $\frac{m}{\beta}=\frac{n}{\alpha}$ is integral, $0<\beta<\alpha\le B(a,b,c)$ and if $g(x)$ is an irreducible non-reciprocal factor of $ax^{\alpha}+bx^{\beta}+c$ then $g(x^{n/\alpha})$ is an irreducible non-reciprocal factor of $ax^n+bx^m+c$. 
\end{enumerate}
\end{theorem}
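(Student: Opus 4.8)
The plan is to peel off the cyclotomic part, which is given explicitly by Theorem~\ref{schinzeltrinomialcyclo} and is therefore harmless, and then to control the non-cyclotomic part by two essentially independent devices: Ljunggren's reciprocal-multiplication trick for the quantitative bound (a), and a reduction to coprime exponents together with a stabilization principle for the structural statement (b). Throughout I would write $d=(n,m)$, $n=dn_1$, $m=dm_1$ with $(n_1,m_1)=1$, so that $f(x)=F(x^d)$ where $F(y)=ay^{n_1}+by^{m_1}+c$; since $(n_1,m_1)=1$, $F$ is itself not a polynomial in $y^{j}$ for any $j>1$, so the coprime level is the ``primitive'' one.

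First I would establish (a). Following Ljunggren's method (described just before Theorem~\ref{ljunggrentrinomial}), I compute the reciprocal product
\[
f(x)\tilde f(x)=acx^{2n}+abx^{2n-m}+bcx^{n+m}+(a^2+b^2+c^2)x^n+bcx^{n-m}+abx^m+ac,
\]
a reciprocal septinomial whose coefficient of $x^n$ is $\|f\|_2^2=a^2+b^2+c^2$. If $f=f_1f_2$ is a proper factorization then $g=\tilde f_1 f_2$ satisfies $g\tilde g=f\tilde f$, whence $\|g\|_2^2=a^2+b^2+c^2$ as well. Thus every factorization of $f$ produces an integer polynomial $g$ of degree $n$ whose number of nonzero terms is at most $a^2+b^2+c^2$ and whose coefficients are bounded solely in terms of $a,b,c$. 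The decisive step is the combinatorial analysis of $g\tilde g=f\tilde f$: matching the extremely sparse seven-term right-hand side against the product of a bounded-complexity $g$ with its reciprocal forces the exponents occurring in $g$ to satisfy a bounded system of linear relations in $n$ and $m$, and the range of solutions pins $n_1=n/(n,m)$ to an effectively computable bound $A(a,b,c)$. This is exactly the mechanism by which Tverberg, Ljunggren, and Jonassen settled the cases $|a|=|c|=1$ (Theorems~\ref{ljunggrentrinomial} and~\ref{jonassen}); carrying it out for arbitrary $a,b,c$ is the technical heart.

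For (b) I would use that, once $n_1\le A(a,b,c)$ is known, the base polynomial $F$ ranges over a finite, explicitly listable family (with $a,b,c$ fixed and $m_1<n_1\le A$). For each such $F$ I would invoke a Gourin--Schinzel stabilization principle: there is an effectively computable $D_0(F)$ such that, for every $d$, each non-reciprocal irreducible factor of $F(x^{d})$ has the form $h(x^{d/e})$ with $e\mid d$, $e\le D_0(F)$, and $h$ a non-reciprocal irreducible factor of $F(x^{e})$. This is proved by tracking the action of $\Gal$ on the $d$-th roots of the roots of $F$ and applying Kummer theory: the exponents $e$ at which a genuinely finer factorization first appears form a finite set, controlled by the multiplicative dependencies among the roots of $F$, which are in turn bounded through $\Res$ and discriminant estimates for $F$. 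Setting $\alpha=en_1$ and $\beta=em_1$ gives $n/\alpha=m/\beta=d/e\in\N$, $0<\beta<\alpha$, and $g(x)=h(x^{n/\alpha})$; taking $B(a,b,c)=A(a,b,c)\cdot\max_F D_0(F)$ yields $\alpha\le B(a,b,c)$, which is precisely (b).

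The main obstacle is the combinatorial core of step (a). For general $a,b,c$ the product $f\tilde f$ still has only seven terms, but $g$ need no longer be reciprocal-symmetric nor have unimodular coefficients, so the clean cancellation arguments available when $|a|=|c|=1$ must be replaced by a complete classification of the bounded-complexity integer polynomials $g$ solving $g\tilde g=f\tilde f$, all while keeping the outcome effective so that $A$ and $B$ are genuinely computable. A secondary difficulty is that when $a$ or $c$ exceeds $1$ the non-cyclotomic part may contain reciprocal factors whose roots lie on the unit circle without being roots of unity; hence one cannot lean on a Tverberg-type dichotomy between ``root of unity'' and ``irreducible complement,'' and these reciprocal non-cyclotomic factors must be disposed of separately before the non-reciprocal factors treated in (b) can be isolated.
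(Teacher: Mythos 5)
A point of order first: this survey does not prove Theorem~\ref{schinzeltrinomial}; it quotes it from Schinzel \cite{AS}, so your proposal can only be judged against Schinzel's original argument and on its own internal completeness. On the latter count it is a research plan rather than a proof: both load-bearing steps --- the ``combinatorial analysis'' yielding $A(a,b,c)$ in (a), and the ``stabilization principle'' invoked for (b) --- are asserted, not established, as you yourself concede. More importantly, the device you chose for (a) is structurally blind to part of the hypothesis. The trick $g=\tilde f_1f_2$, $g\tilde g=f\tilde f$ produces a polynomial $g\notin\{\,\pm f,\pm\tilde f\,\}$ only when $f$ admits a factorization $f=f_1f_2$ in which \emph{both} factors are non-reciprocal; it therefore detects reducibility of the \emph{non-reciprocal} part of $f$, not of the \emph{non-cyclotomic} part, and these differ exactly when reducibility is caused by reciprocal non-cyclotomic factors. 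A concrete instance: $x^4-7x^2+1=(x^2-3x+1)(x^2+3x+1)$, where both factors are irreducible, reciprocal, and have no roots of unity among their roots (all roots are real), so the non-cyclotomic part is reducible; yet every $g=\tilde f_1f_2$ equals $\pm f$, and your matching argument has nothing to match. The same blindness occurs whenever the non-cyclotomic part has at most one non-reciprocal irreducible factor together with reciprocal non-cyclotomic ones. You flag such factors as a ``secondary difficulty,'' but handling them is half of Schinzel's theorem; his treatment uses a genuinely different descent (a reciprocal factor divides both $f$ and $\tilde f$, hence divides $cf-a\tilde f=bcx^m-abx^{n-m}+(c^2-a^2)$, a lacunary polynomial of strictly smaller degree), and nothing of this kind appears in your plan.

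Even where your device does apply, the claim that matching exponents against the seven-term $f\tilde f$ ``forces a bounded system of linear relations \ldots and pins $n_1\le A(a,b,c)$'' is precisely the theorem to be proved; Ljunggren's and Jonassen's analyses (Theorems~\ref{ljunggrentrinomial} and~\ref{jonassen}) succeed because their tiny norms leave only a handful of admissible shapes for $g$, and you offer no mechanism for making the case analysis uniform in $n,m$ when $a^2+b^2+c^2$ is arbitrary --- effectivity of $A(a,b,c)$ requires exactly that uniformity. Finally, in (b) the Gourin--Schinzel stabilization principle you assume is itself a theorem of the same depth as the result (the one-sentence Kummer-theoretic justification is not a proof), and it points in the wrong direction: it says each top-level non-reciprocal factor descends from \emph{some} bounded level $e$, possibly different for different factors, whereas (b) demands a single pair $(\alpha,\beta)$ such that every irreducible non-reciprocal factor $g$ of $ax^{\alpha}+bx^{\beta}+c$ \emph{remains irreducible} under $x\mapsto x^{n/\alpha}$. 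Producing a common level and proving irreducibility upstairs is an additional argument your sketch does not contain.
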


All the trinomials that we came across so far have at most two non-reciprocal irreducible factors. We recall the two sets $\S_m$ and $\S_m'$ introduced earlier. From Theorem \ref{bonciocattrinomial1}, if $f(x)\in \S_2'$ and $p^k>a+b$, then $f(x)$ is irreducible. We have shown in \cite{biswajit4} that if $f(x)\in \S_2\setminus \S_2'$ and $p^k>a+b$, then $f(x)$ can have at most two non-reciprocal irreducible factors.  Apart from the polynomials given in Theorem \ref{jonassen}, many polynomials having two irreducible non-reciprocal factors and $k$ even. For example, if $p$ is a prime and $k\ge 1, a\ge 1$, then
\begin{align}
& x^4+(2\cdot p^k-a^2)x^2+p^{2k}=(x^2+ax+p^k)(x^2-ax+p^k)\in \S_2\setminus\S_2'\notag
\end{align}
is a product of two irreducible non-reciprocal factors. Later we extend it and proved that if $f(x)\in \S_3\setminus \S_3'$ and $p^k>a+b,$ then $f(x)$ can have at most three non-reciprocal irreducible factor.  The sharpness of this result  follows from example \eqref{trioour1}. 
However, the number of  non-reciprocal  irreducible factors of $ax^n+bx^m+c$ is unbounded. For example, 
\begin{equation}\label{arbitraryfactor}
x^{2k}-2^{k+1} x^k+2^{2k}=(x^k-2^k)^2=\left(\prod\limits_{d|k}2^{\varphi(d)}\Phi_d\left(\frac{x}{2}\right) \right)^2,
\end{equation}
where $2^{\varphi(d)}\Phi_d\left(\frac{x}{2}\right)\in \Z[x]$ is an irreducible non-reciprocal polynomial for every $d\in \N$.
Over time, several other questions were asked related to  the irreducibility of trinomials. For example, Schinzel \cite{AS5} asked: 
\begin{qu}
Does there exist an integer $K,$ independent of the polynomial, such that every trinomial in $\Q[x]$ has an irreducible factor in $\Q[x]$ with at most $K$ terms?
\end{qu}

Mrs. H. Smyczek \cite{AS5} showed that if $K$ exists, then $K\ge 6$. Bremner \cite{bremner1} improved to $K\ge 8$ by furnishing an exact trinomial of the form $x^{14}+bx^2+c,$ having precisely two factors each of length $8$.  However, the question remains open as of today.

Inspired by the work of Bremner \cite{bremner1}, several authors tried to characterize trinomials as having a factor of fixed length. For further studies in this direction, one can look into \cite{chen}, \cite{herendi},  \cite{le},  \cite{lin},\cite{liu}, \cite{rabinowitz},  \cite{ribenboim}, \cite{AS8},\cite{AS9}, \cite{AS10}, \cite{yangfu}.

Let $0<n_1\le n_2\le \cdots\le n_k$. A trinomial $x^n+bx^m+c\in \Q[x]$ is said to have reducibility type $(n_1,n_2,\ldots, n_k),$  if it is a product of irreducible polynomials in $\Q[x]$ of degree $n_1, n_2,\ldots, n_k$. For example, if we take $k=15$ in \eqref{arbitraryfactor}, then
\begin{small}
\begin{equation*}
x^{30}-2^{16}x^{15}+2^{30}=(x-2)^2(x^2+2x+4)^2(x^4+2x^3+4x^2+8x+16)^2(x^8-2x^7+\cdots+32x^3-128x+256)^2
\end{equation*}
\end{small}
has a reducibility type $(1,1,2,2,4,4,8,8)$. A. Bremner and M. Ulas \cite{bremner2} studied possible reducibility type  for $n\le 10$ and $m\le 3$. They showed that if $m=1,$ then the reducibility type $(1,1,1,1)$ is impossible.  It has been conjectured that

\begin{conj}[Bremner \& Ulas \cite{bremner2}]\label{bremnerconjecture}
Let $b, c\in \N$ and $f(x)=x^n+b\ep_1x+c\ep_2$ be a trinomial of degree $n\ge 4$. Then the reducibility type $(1,1,1,n-3)$ doesn't exist. 
\end{conj}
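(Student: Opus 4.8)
The plan is to prove the stronger statement that $f(x)=x^n+b\ep_1x+c\ep_2$ has at most two rational roots counted with multiplicity; since reducibility type $(1,1,1,n-3)$ demands three linear factors over $\Q$, this settles the conjecture at once. The starting point is that the derivative $f'(x)=nx^{n-1}+b\ep_1$ is a binomial. From $xf'(x)-nf(x)=(1-n)b\ep_1x-nc\ep_2$ one sees that $\gcd(f,f')$ divides this nonzero linear polynomial, so $f$ has at most one repeated root; since $f''(x)=n(n-1)x^{n-2}$ vanishes only at $0$ and no root of $f$ equals $0$ (as $c\neq 0$), any repeated root has multiplicity exactly $2$. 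Being a binomial of degree $n-1$, $f'$ has at most two real zeros, so by Rolle's theorem $f$ has at most three distinct real roots. Hence three rational roots counted with multiplicity can occur only as three distinct integer roots (Case~A) or as one double integer root together with a distinct simple integer root (Case~B). Rational roots are automatically integers since $f$ is monic.

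First I would dispose of $n$ even. Then $n-2$ is even, so $f''(x)=n(n-1)x^{n-2}\ge 0$ and $f$ is convex; a convex function has at most two real roots counted with multiplicity, so neither Case~A nor Case~B can occur. Thus $n$ must be odd. Next, Case~B for $n$ odd: a rational double root $r$ satisfies $f'(r)=0$, i.e.\ $r^{n-1}=-b\ep_1/n$, and since $n-1$ is even this forces $\ep_1=-1$ and $b=nr^{n-1}$, whence $f(r)=0$ gives $c\ep_2=(n-1)r^n$ and
\begin{equation*}
f(x)=x^n-nr^{n-1}x+(n-1)r^n.
\end{equation*}
The substitution $x=ry$ converts this into $r^n\,\chi(y)$ with $\chi(y)=y^n-ny+(n-1)$, whose only rational root is the double root $y=1$: any other integer root $y_0$ divides $n-1$ and satisfies $|y_0|^n=|ny_0-(n-1)|$, impossible for $|y_0|\ge 2$ by the estimate $|y_0|^n\ge |y_0|\,2^{n-1}>2n|y_0|\ge |ny_0-(n-1)|$ valid for $n\ge 5$, while $\chi(-1)=2n-2\neq 0$. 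So $f$ has no second rational root and Case~B is eliminated.

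The heart of the matter is Case~A: three distinct integer roots $u<v<w$ for $n$ odd. Each root makes $x^n$ agree with the line $y=-b\ep_1x-c\ep_2$, so the points $(u,u^n),(v,v^n),(w,w^n)$ are collinear, giving $\tfrac{v^n-u^n}{v-u}=\tfrac{w^n-v^n}{w-v}$, equivalently the vanishing of the complete homogeneous symmetric polynomial $h_{n-2}(u,v,w)=\sum_{i+j+k=n-2}u^iv^jw^k$. Strict convexity of $x^n$ on $(0,\infty)$ and strict concavity on $(-\infty,0)$ make equal consecutive secant slopes impossible among points of one sign, so the roots must straddle the origin; using the symmetry $x\mapsto -x$ (which only flips $\ep_2$) I may assume $u=-a<0<v<w$, and the secant condition becomes
\begin{equation*}
\frac{a^n+v^n}{a+v}=\frac{w^n-v^n}{w-v}.
\end{equation*}

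I expect this Diophantine equation to be the main obstacle: one must show it has no solution in positive integers for odd $n\ge 5$. The natural route is a growth analysis comparing $g_1(a)=\sum_{i=0}^{n-1}(-1)^i a^{\,n-1-i}v^{i}$ with $g_2(w)=\sum_{i=0}^{n-1}w^{\,n-1-i}v^{i}$ and showing $g_2(w)$ always falls strictly between two consecutive values of the eventually increasing function $g_1$, so that it is never attained; one already has $g_2(w)-g_1(w)=\tfrac{2vw(w^{n-1}-v^{n-1})}{w^2-v^2}>0$, and the remaining task is to bound the gap to the next value of $g_1$ uniformly in $n$ (a naive one-step sandwich $g_2(w)<g_1(w+1)$ already fails, e.g.\ for $n=5$, $v=2$, $w=3$, so the argument must locate the correct neighbouring index). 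Should a uniform elementary bound prove elusive, the fallbacks are to treat the equation as defining a curve of high genus and invoke effective finiteness, or to obstruct $h_{n-2}(u,v,w)\equiv 0$ modulo a well-chosen prime. Completing this step finishes the proof, as it shows $f$ has at most two rational roots counted with multiplicity and hence cannot have reducibility type $(1,1,1,n-3)$.
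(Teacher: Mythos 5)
The statement you are proving is not a theorem of the paper at all: it appears as Conjecture~\ref{bremnerconjecture}, attributed to Bremner and Ulas, and the paper gives no proof (what is known, per the paper, is a case check for $n\le 10$, $m\le 3$ and the impossibility of type $(1,1,1,1)$ when $m=1$). So a complete argument here would settle an open problem, and yours does not: it contains a genuine, self-acknowledged gap exactly where the difficulty of the conjecture lives. Your reductions are fine as far as they go --- the $\gcd(f,f')$ computation showing at most one repeated root (of multiplicity $2$), the convexity argument killing all even $n$, and the elimination of Case~B via $\chi(y)=y^n-ny+(n-1)$ are all correct. But Case~A (odd $n$, three distinct integer roots necessarily straddling the origin) is reduced to the Diophantine condition $\frac{a^n+v^n}{a+v}=\frac{w^n-v^n}{w-v}$, equivalently $h_{n-2}(u,v,w)=0$, and you explicitly do not prove this has no solutions; you even exhibit that the naive sandwich $g_2(w)<g_1(w+1)$ fails (e.g.\ $n=5$, $v=2$, $w=3$). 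What remains is precisely the content of the conjecture, restated in different coordinates.

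The proposed fallbacks do not close the gap either. Treating the equation as a curve and invoking effective finiteness would, at best, give finiteness of solutions for each \emph{fixed} odd $n$, not non-existence, and gives nothing uniform in $n$ (the equation itself varies with $n$); note the conjecture asserts non-existence for every $n\ge 4$. A congruence obstruction ``modulo a well-chosen prime'' is pure speculation: since $h_{n-2}$ is a dense symmetric form whose residues vary with $n$, there is no candidate prime on offer, and any such obstruction would again have to work uniformly in $n$. So the proposal should be read as a (sensible) reduction of the conjecture to a family of Diophantine equations, not as a proof; presenting it as settling the conjecture would be incorrect.
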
 

Suppose $f(x)\in \Q[x]$ is a polynomial which divides infinitely many trinomials. Can $f(x)$ divide a linear or quadratic polynomial in $x^r$ for some $r\ge 1$? For example, $\Phi_3(x)$ divides $x^{3k+2}+x+1$ for every $k\ge 1$, by Theorem \ref{selmer1}, and $\Phi_3(x)|\Phi_3(x^r)=x^{2r}+x^r+1$ for every $r\ge 1, (r,3)=1$.  Is this true for arbitrary $f(x)\in \Q[x]?$ Posner and Rumsey \cite{posner} investigated this question and subsequent development can be found in \cite{gyory}, \cite{viola}. 

Till now we were confined to mainly questions regarding trinomials over field of rational numbers. Irreducibility of trinomials over arbitrary field is not that much common in the literature. Let $\K$ be an algebraic number field and $f(x)=x^n+bx^m+c\in \K[x]$ has a linear or quadratic factor $g(x)$. Schinzel \cite{AS13} studied reducibility of the part $h(x)=\frac{f(x)}{g(x^{(n,m)})}$ and gave several necessary and sufficient condition for reducibility of $h(x)$ over  $\K$. 
For irreducibility of trinomials over the function field, we refer to \cite{AS9}, \cite{ AS10}, \cite{ AS11}. 

The reducibility of trinomials over finite field has also been studied extensively. There is a classical result, known as modulo $p$-test ($p$ being a prime number), which relates the irreducibility of polynomials over a finite field and the irreducibility of polynomials over $\Q$.\\
{\bf Modulo $p$-test(\cite{gallian}, Theorem 17.3): } Let $p$ be a prime and suppose that $f(x)\in \Z[x]$ with $\deg(f(x))\ge 1$. Let $\tilde{f}(x)$ be the polynomial in $\Z_p[x]$ obtained from $f(x)$ by reducing all the coefficients of $f(x)$ modulo $p$. If $\tilde{f}(x)$ is irreducible over $\Z_p$ and $\deg(\tilde{f}(x))=\deg(f(x))$, then $f(x)$ is irreducible over $\Q$. \\

For a general overview of polynomial factorization over finite field, the reader can consider \cite{ore1}, \cite{vandiver1}.
Swan \cite{swan} proved that if $\F$ is a field of odd characteristic and $f(x)\in \F[x]$ is a polynomial of degree $n$, having no repeated roots, then $r\equiv n\pmod{2}$ if and only if discriminant (see section \ref{sec:dt} for definition) of $f(x)$ is a square in $\F$, where $r$ is the number of irreducible factors of $f(x)$. Dickson \cite{dickson} also proved this result independently. L. Carlitz \cite{carlitz} considered the trinomial $g(x)=x^{2q^n+1}+x^{q^n-1}+1$ and proved that the degree of every irreducible factor of $g(x)$ over $\F_q$ divides either $2n$ or $3n$, where $q$ is a prime power.

The distribution of irreducible trinomials over $\F_3$ has been studied by O. Ahmadi \cite{ahmadi}. It has been proved that if $n$ is even, $q$ is an odd prime power, then $x^n+bx^m+c$ is irreducible over $\F_q$ if and only if $x^n-bx^m+c$ is irreducible over $\F_q$. Later Ahmadi proved that if $4|n,$ then $x^n-x^m+1$ is irreducible over $\F_3$. A necessary condition for the irreducibility of trinomials over a finite field is given by  J. Gathen \cite{von1}:

\begin{theorem}[Gathen \cite{von1}]
Let $p$ be an odd prime, $f(x)=x^n+bx^m+c, b,c\in \F_{p^r}\backslash \{\, 0\,\}, r\ge 1, n>m\ge 1,d=(n,m), n_1=n/d, m_1=m/d, k_1=p(p^r-1), k_2=\lcm(4, k_1).$ Then the discriminant of $f$ and the property that $f$ is squarefree with odd number of irreducible factors depends only on the following residues:

$$n\pmod{k_2}; \quad m\pmod{k_1}; \quad n_1\pmod{p^r-1}\quad  \text{ and }\quad m_1\pmod{p^r-1}.$$
\end{theorem}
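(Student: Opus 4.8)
The plan is to reduce everything to the classical closed-form expression for the discriminant of a trinomial and then track, factor by factor, exactly which residues of $n$ and $m$ govern each piece once we pass to the field $\F_{p^r}$. Writing $d=(n,m)$, $n=dn_1$, $m=dm_1$ with $(n_1,m_1)=1$, the resultant computation $\operatorname{disc}(f)=(-1)^{n(n-1)/2}\Res(f,f')$ together with the factorization $f'(x)=x^{m-1}(nx^{n-m}+bm)$ yields, as a polynomial identity over any commutative ring and in particular over $\F_{p^r}$,
\begin{equation}\label{eq:disctrinomial}
\operatorname{disc}(x^n+bx^m+c)=(-1)^{n(n-1)/2}\,c^{\,m-1}\Big[n^{n_1}c^{\,n_1-m_1}-(-1)^{n_1}(n-m)^{\,n_1-m_1}m^{m_1}b^{\,n_1}\Big]^{d}.
\end{equation}
First I would record this identity (checking it against $x^2+bx+c$, for which it returns $b^2-4c$), since every subsequent step is just a reading of \eqref{eq:disctrinomial}.

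Next I would analyse the three outer factors of \eqref{eq:disctrinomial} in $\F_{p^r}$. The sign $(-1)^{n(n-1)/2}$ is a function of $n\bmod 4$, hence of $n\bmod k_2$ because $4\mid k_2$. Since $c\in\F_{p^r}^{*}$ satisfies $c^{p^r-1}=1$, the factor $c^{m-1}$ depends only on $m-1\bmod(p^r-1)$, hence on $m\bmod k_1$ because $(p^r-1)\mid k_1$. For the bracket $T:=n^{n_1}c^{n_1-m_1}-(-1)^{n_1}(n-m)^{n_1-m_1}m^{m_1}b^{n_1}$ I would treat its pieces separately: the integer bases $n$, $n-m$, $m$ are reduced modulo $p$, and since $p\mid k_1\mid k_2$ these reductions are fixed by $n\bmod k_2$ and $m\bmod k_1$; whenever such a base is nonzero in $\F_p$ its power in $\F_p^{*}$ is governed by the exponent modulo $p-1$, and the exponents $n_1$, $m_1$, $n_1-m_1$ modulo $p-1$ (as well as $n_1\bmod 2$, which controls $(-1)^{n_1}$) are all fixed by $n_1,m_1\bmod(p^r-1)$ because $(p-1)\mid(p^r-1)$ and $2\mid(p^r-1)$; finally the field powers $b^{n_1}$ and $c^{n_1-m_1}$ are fixed by $n_1,m_1\bmod(p^r-1)$. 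Thus $T\in\F_{p^r}$ is determined by the four listed residues.

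The remaining, and genuinely delicate, point is the outer exponent $d$ in $T^{d}$: the residue list does not contain $d$, so I must show $d\bmod(p^r-1)$ is recoverable. Set $L=p^r-1$. Because $(n_1,m_1)=1$ the congruence $un_1+vm_1\equiv 1\pmod L$ is solvable; for any solution $(u,v)$ I would prove $un+vm\equiv d\pmod L$. Indeed, for two solutions the difference $(\delta_u,\delta_v)$ satisfies $\delta_un_1+\delta_vm_1\equiv0\pmod L$, whence $\delta_un+\delta_vm\equiv d(\delta_un_1+\delta_vm_1)\equiv0\pmod L$ using $n=dn_1$ and $m=dm_1$; so $un+vm\bmod L$ is independent of the choice of $(u,v)$, and taking an integral Bezout pair it equals $d$. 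Since $n,m,n_1,m_1\bmod L$ are all known, this pins down $d\bmod L$. Consequently $T^{d}$ is determined: it is $0$ when $T=0$, and otherwise $T^{L}=1$ makes it depend only on $d\bmod L$. Combining the three factors, \eqref{eq:disctrinomial} shows that $\operatorname{disc}(f)\in\F_{p^r}$ is a function of the four residues alone.

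Finally, the factor-count statement follows from Swan's theorem quoted above: for $f$ without repeated roots over a field of odd characteristic, the number of irreducible factors is congruent to $n$ modulo $2$ precisely when $\operatorname{disc}(f)$ is a square. Knowing $\operatorname{disc}(f)$ as an element of $\F_{p^r}$ determines both whether it vanishes (equivalently, whether $f$ is squarefree) and, when nonzero, whether it is a square; hence the property that $f$ is squarefree with an odd number of irreducible factors is itself a function of $n\bmod k_2$, $m\bmod k_1$, $n_1\bmod(p^r-1)$ and $m_1\bmod(p^r-1)$. The main obstacle throughout is the $T^{d}$ step, that is, the Bezout argument recovering $d\bmod(p^r-1)$; everything else is routine bookkeeping of the orders of elements in $\F_p^{*}$ and $\F_{p^r}^{*}$. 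It is worth noting that this analysis also \emph{explains} the shape of the moduli: the factor $4$ enters $k_2$ only through the sign $(-1)^{n(n-1)/2}$, which involves $n$ but not $m$, which is exactly why $m$ is only needed modulo $k_1$.
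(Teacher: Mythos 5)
This theorem is quoted in the survey as a cited result of von zur Gathen \cite{von1}; the paper itself contains no proof of it, so there is nothing internal to compare your argument against. Judged on its own merits, your proof is correct, and it is the natural (and, in substance, the original) argument: specialize the closed-form trinomial discriminant to $\F_{p^r}$, check that each factor is periodic in the stated moduli, and convert the discriminant information into the factor-count statement via the Stickelberger--Swan theorem, which the survey quotes in Section 2. Your bookkeeping is sound: the sign needs $n \bmod 4$, the bases $n$, $m$, $n-m$ need $n,m \bmod p$, the exponents on elements of $\F_p^{*}$ and $\F_{p^r}^{*}$ need $n_1,m_1$ modulo $p-1$ and $p^r-1$ respectively, and all of these divide the listed moduli (using $k_1\mid k_2$ and $p-1\mid p^r-1$). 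The genuinely non-obvious step, recovering $d \bmod (p^r-1)$ even though $d$ is not among the data, is handled correctly by your Bezout argument: any solution of $un_1+vm_1\equiv 1 \pmod{p^r-1}$ gives $un+vm\equiv d$, and such solutions exist and are computable from the residues $n_1,m_1,n,m \bmod (p^r-1)$, all of which are determined by the four listed residues. Two small points you leave implicit but which are harmless: the vanishing cases of the bases require the exponents $n_1$, $m_1$, $n_1-m_1$ to be positive (true since $n>m\ge 1$), and translating Swan's criterion $r\equiv n\pmod 2$ into ``$r$ is odd'' requires $n \bmod 2$, which is recovered from $n \bmod k_2$ since $2\mid k_2$; stating the latter explicitly would make the last paragraph airtight.
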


For further study on the factorization of trinomials over finite field, reader should refer to \cite{agou},  \cite{albert}, \cite{blake},  \cite{estes}, \cite{fredricksen},  \cite{vishne},  \cite{von}.

\section{Location of zeros of trinomials}\label{sec:lzt}
The location of zeros of a polynomial sometimes helps us in  determining the irreducibility of the polynomial.  For example, if a monic polynomial $f(x)\in \Z[x], f(0)\ne 0$ has exactly one root in $|z|>1$ and all the remaining roots are in $|z|<1$, then $f(x)$ is irreducible. For if $f(x)=f_1(x)f_2(x)$ in $\Z[x]$, then either $f_1(x)$ or $f_2(x)$ has all its root in $|z|<1$, contradicting the fact $f_1(x), f_2(x)\in \Z[x]$. Similarly, if a monic polynomial $f(x)\in \Z[x], f(0)\ne 0$ has $k$ roots in $|z|>1$ and remaining roots are in $|z|<1$, then $f(x)$ can have at most $k$ irreducible factors. In this section, we  will discuss about the location of zeros of trinomials. Unless specified otherwise, throughout the section.  it is  assumed that all the coefficients are non-zero real numbers. Kennedy \cite{kennedy} specified the region in which all the roots of the trinomial $x^n+bx^m+c$ lie. 
\begin{theorem}[Kennedy \cite{kennedy}]
If $\alpha$ is the positive real root of the equation $x^n+|b|x^m-|c|=0$ and $\beta$ is the positive real root of $x^n-|b|x^m-|c|=0$, then the roots of the trinomials $x^n+bx^m+c$ lies in the annulus $\alpha\le |z|\le \beta$. 
\end{theorem}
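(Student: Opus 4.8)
The plan is to reduce everything to two auxiliary real functions and the triangle inequality. Set $g(x)=x^n+|b|x^m-|c|$ and $h(x)=x^n-|b|x^m-|c|$, so that $\alpha$ and $\beta$ are by definition their positive real roots. The first task is to verify that these roots exist, are unique, and that $g$ and $h$ have the expected sign pattern across them. For $g$, note that on $(0,\infty)$ it is strictly increasing (it is a sum of increasing terms minus a constant) with $g(0)=-|c|<0$ and $g(x)\to\infty$; hence $g$ has a unique positive zero $\alpha$, with $g<0$ on $[0,\alpha)$ and $g>0$ on $(\alpha,\infty)$. The function $h$ is not monotone, but I would compute $h'(x)=x^{m-1}\bigl(nx^{n-m}-m|b|\bigr)$ to see that on $(0,\infty)$ it decreases and then increases; combined with $h(0)=-|c|<0$ and $h(x)\to\infty$, this forces a unique positive zero $\beta$, with $h<0$ on $[0,\beta)$ and $h>0$ on $(\beta,\infty)$.

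With the sign analysis in hand, the two bounds follow symmetrically. For the upper bound, take any root $z$ of $f(x)=x^n+bx^m+c$, so $z^n=-bz^m-c$, and apply the triangle inequality to obtain
\begin{equation*}
|z|^n=|bz^m+c|\le |b|\,|z|^m+|c|,
\end{equation*}
which rearranges to $h(|z|)\le 0$. Since $h<0$ precisely on $[0,\beta)$ and $h>0$ on $(\beta,\infty)$, this inequality forces $|z|\le\beta$. For the lower bound, I would instead isolate the constant term as $c=-z^n-bz^m$ and estimate
\begin{equation*}
|c|=|z^n+bz^m|\le |z|^n+|b|\,|z|^m,
\end{equation*}
so that $g(|z|)\ge 0$; because $g$ is strictly increasing with its only zero at $\alpha$, this yields $|z|\ge\alpha$. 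Combining the two gives $\alpha\le|z|\le\beta$ for every root $z$.

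The only genuinely non-routine point is the behavior of $h$, since it fails to be monotone and so the uniqueness of $\beta$ and the correct sign on each side are not immediate. I expect this to be the main (mild) obstacle, handled cleanly by the derivative computation above, which shows $h$ has a single interior minimum and therefore crosses zero exactly once on $(0,\infty)$. Everything else is a direct consequence of the defining equation $f(z)=0$ and the triangle inequality, so no further machinery is needed.
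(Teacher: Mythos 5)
Your proposal is correct and complete. Note, though, that the paper itself offers no proof of this statement: it is a survey, and the theorem is quoted directly from Kennedy's paper, so there is no internal argument to compare against. Your route --- isolating $z^n$ (resp.\ $c$) from $f(z)=0$, applying the triangle inequality to get $h(|z|)\le 0$ (resp.\ $g(|z|)\ge 0$), and then pinning down $|z|$ via the sign pattern of the two auxiliary functions --- is the standard and essentially unavoidable argument for this kind of annulus bound, and your treatment of the one delicate point (that $h$ is not monotone, handled by the factorization $h'(x)=x^{m-1}\bigl(nx^{n-m}-m|b|\bigr)$ showing a single interior minimum, hence a unique sign change) is exactly what is needed; the sign analysis also implicitly uses the section's standing convention that $b,c\ne 0$, which guarantees $g(0)=h(0)=-|c|<0$ and that $g$ is genuinely strictly increasing.
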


However, finding a positive real root of $x^n+b x^m-|c|$ is not an easy task to accomplish. Because of this reason, Kennedy gave a weaker but easily computable bound. 
\begin{theorem}[Kennedy \cite{kennedy}]
If $\alpha$ and $\beta$ are the positive real roots of the equation $x^{n-m}=|c|^{n-m/n}-|b|$ and $x^{n-m}=|c|^{n-m/n}+|b|$ respectively, then the roots of $x^n+bx^m+c$ lie in the annulus $\alpha<|z|<\beta$. 
\end{theorem}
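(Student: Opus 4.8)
The plan is to obtain the coarser, easily computable annulus of this theorem as a consequence of the sharper one proved just above, by checking that the new inner radius does not exceed the old one and the new outer radius is not smaller. Write $\alpha_0$ and $\beta_0$ for the positive real roots of $x^n+|b|x^m-|c|=0$ and $x^n-|b|x^m-|c|=0$, so that the first Kennedy theorem already confines every root $z$ of $x^n+bx^m+c$ to the closed annulus $\alpha_0\le|z|\le\beta_0$. It then suffices to prove $\alpha<\alpha_0$ and $\beta_0<\beta$, where $\alpha,\beta$ are the positive roots of $x^{n-m}=|c|^{(n-m)/n}-|b|$ and $x^{n-m}=|c|^{(n-m)/n}+|b|$ (I read the exponent in the statement as $(n-m)/n$, which is the only dimensionally consistent choice). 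Routing through the first theorem is essential rather than optional: a direct per-root estimate fails because an individual root need not satisfy $|z|\ge|c|^{1/n}$, whereas the special radii $\alpha_0,\beta_0$ do straddle this value, and that is exactly what the comparison exploits.

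For the outer bound I would first note that the defining relation $\beta_0^n=|b|\beta_0^m+|c|$ forces $\beta_0^n>|c|$, since $b\ne0$ and $\beta_0>0$; hence $\beta_0>|c|^{1/n}$ and therefore $\beta_0^{-m}<|c|^{-m/n}$. Dividing the defining relation by $\beta_0^m$ gives $\beta_0^{n-m}=|b|+|c|\beta_0^{-m}$, and inserting the last bound yields $\beta_0^{n-m}<|b|+|c|^{(n-m)/n}=\beta^{n-m}$, so $\beta_0<\beta$ by monotonicity of $t\mapsto t^{n-m}$ on the positive reals. The inner bound is entirely symmetric: from $\alpha_0^n=|c|-|b|\alpha_0^m$ one gets $\alpha_0^n<|c|$, hence $\alpha_0<|c|^{1/n}$ and $\alpha_0^{-m}>|c|^{-m/n}$; dividing by $\alpha_0^m$ gives $\alpha_0^{n-m}=|c|\alpha_0^{-m}-|b|>|c|^{(n-m)/n}-|b|=\alpha^{n-m}$, whence $\alpha<\alpha_0$. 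Chaining these with the first theorem produces $\alpha<\alpha_0\le|z|\le\beta_0<\beta$, which is precisely the desired strict enclosure; note that the strictness is exactly what the hypothesis $b\ne0$ buys, which explains why this coarser statement may sharpen the ``$\le$'' of the first theorem to ``$<$''.

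The only genuinely delicate point is the inner radius in the degenerate regime $|b|\ge|c|^{(n-m)/n}$, where $x^{n-m}=|c|^{(n-m)/n}-|b|$ has no positive solution and the estimate $\alpha_0^{n-m}>\alpha^{n-m}$ loses its meaning. In that case I would adopt the convention $\alpha=0$; the inequality $\alpha<|z|$ then reduces to $|z|>0$, which holds because $c\ne0$ prevents $0$ from being a root, so the statement remains correct. Everything else is routine, resting only on the observation that the auxiliary radii bracket the geometric-mean radius, namely $\alpha_0<|c|^{1/n}<\beta_0$, together with monotonicity of $t\mapsto t^{n-m}$ to translate freely between radii and their $(n-m)$-th powers.
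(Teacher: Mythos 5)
Your proof is correct. Bear in mind that the paper, being a survey, gives no proof of this statement at all: it merely cites Kennedy and frames the result as a ``weaker but easily computable bound'' accompanying the sharper annulus $\alpha_0\le|z|\le\beta_0$ of the preceding theorem. Your derivation makes that framing literal, and the details check out: from $\beta_0^n=|b|\beta_0^m+|c|$ one gets $\beta_0>|c|^{1/n}$, hence $\beta_0^{n-m}=|b|+|c|\beta_0^{-m}<|b|+|c|^{(n-m)/n}=\beta^{n-m}$; symmetrically $\alpha_0<|c|^{1/n}$ gives $\alpha_0^{n-m}=|c|\alpha_0^{-m}-|b|>|c|^{(n-m)/n}-|b|=\alpha^{n-m}$; the strictness correctly traces back to $b\ne0$, your reading of the garbled exponent as $(n-m)/n$ is the right one, and the convention $\alpha=0$ when $|b|\ge|c|^{(n-m)/n}$ is a sensible way to keep the statement meaningful in the degenerate regime. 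One quibble: your claim that routing through the first theorem is ``essential rather than optional'' because a direct per-root estimate fails is overstated. A direct argument does work with a case split: any root satisfies $|z|^n\le|b||z|^m+|c|$ and $|c|\le|z|^n+|b||z|^m$; if $|z|\le|c|^{1/n}$ then $|z|<\beta$ is automatic since $\beta>|c|^{1/n}$, while if $|z|>|c|^{1/n}$ then dividing $|z|^n\le|b||z|^m+|c|$ by $|z|^m$ and using $|z|^{-m}<|c|^{-m/n}$ gives $|z|^{n-m}<\beta^{n-m}$; the inner bound is handled the same way with the cases reversed. So the appeal to the first theorem is a convenience, not a necessity: your route is shorter given that the first theorem is already on the table, whereas the direct route would make the statement self-contained. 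This misjudgment is only in the motivating commentary and does not affect the validity of the proof itself.
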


Suppose $a,b,c$ are non-zero real numbers, $|a|+|c|<|b|$, and  $f(x)=ax^n+bx^m+c$. For $|z|=1$, 
\begin{equation*}
|az^n+bz^m+c-bz^m|=|az^n+c|\le |a|+|c|<|b|=|bz^m|.
\end{equation*}
From Rouch\'e's theorem, $f(x)=ax^n+bx^m+c,$ $|a|+|c|<|b|$ has $m$ number zeros in $|z|<1$. If $|a|+|c|=|b|$, then the location of zeros has been calculated by Dilcher et al. \cite{dilcher}. They considered trinomials of the form $bx^n-ax^m+a-b$ with $a>b>0$ and  proved that 
\begin{theorem}[Dilcher et al. \cite{dilcher}]\label{dilcher}
Suppose $a>b>0$ are real numbers. The number of zeros of $f(x)=bx^n-ax^m+a-b$ strictly inside the unit circle is $m-\gcd(n,m)$ if $\frac{a}{b}\ge \frac{n}{m}$, and $m$ if $\frac{a}{b}<\frac{n}{m}$.  
\end{theorem}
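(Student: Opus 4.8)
The plan is to count the zeros via the argument principle, after first reducing to coprime exponents. Write $d=\gcd(n,m)$, $n=dN$, $m=dM$ with $\gcd(N,M)=1$, and note that $f(x)=bx^{dN}-ax^{dM}+(a-b)=g(x^d)$, where $g(y)=by^N-ay^M+(a-b)$. Since $x\mapsto x^d$ preserves modulus (a point lies inside, on, or outside the unit circle according as its $d$-th power does) and since $g(0)=a-b\ne 0$, each zero of $g$ of modulus $<1$, $=1$, or $>1$ lifts to exactly $d$ zeros of $f$ of the same type, counted with multiplicity. Hence it suffices to prove that $g$ has $M-1$ zeros strictly inside the unit disk when $a/b\ge N/M$ and $M$ zeros when $a/b<N/M$; multiplying by $d$ and using $n/m=N/M$ yields the theorem.

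I would then locate the zeros of $g$ on the unit circle. For $y=e^{i\theta}$, a direct computation of $|g(e^{i\theta})|^2$ (equivalently of $|be^{iN\theta}+(a-b)|^2-a^2$) collapses, after using $a>b>0$, to a multiple of $(\cos N\theta-1)$, and one finds that $g(e^{i\theta})=0$ forces $e^{iN\theta}=e^{iM\theta}=1$. Because $\gcd(N,M)=1$, this forces $\theta\equiv 0$, so $y=1$ is the \emph{only} zero of $g$ on the circle. Its multiplicity is governed by $g'(1)=bN-aM$: it is simple when $a/b\ne N/M$ and double when $a/b=N/M$, the latter being precisely the configuration that accounts for the jump in the count.

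Next I would apply the argument principle along the unit circle indented by a small semicircle around $y=1$, so that the number $Z$ of interior zeros equals the winding number of $\theta\mapsto g(e^{i\theta})$ about the origin over the indented contour. To evaluate it I split the change of argument into the contribution of the small arc near $y=1$ and that of the remaining arc. The local contribution is read off from the Taylor expansion
\[
g(e^{i\theta})=i(bN-aM)\theta-\tfrac12(bN^2-aM^2)\theta^2+O(\theta^3),
\]
which separates the regimes: the leading imaginary term $i(bN-aM)\theta$ fixes the sense in which the curve leaves the origin when $a/b\ne N/M$, while at $a/b=N/M$ the expansion degenerates to the real term $-\tfrac12 aM(N-M)\theta^2<0$, exhibiting the tangency of the double zero.

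I expect the main obstacle to be the evaluation of the net change of argument over the large arc and its correct bookkeeping against the local contribution, so as to yield exactly $M$ versus $M-1$. Concretely, I would track the sign changes of $\operatorname{Im}g(e^{i\theta})=b\sin N\theta-a\sin M\theta$ together with the sign of $\operatorname{Re}g$ at each real-axis crossing, or homotope $g$ to its dominant part $-ay^M+(a-b)$ (which contributes $M$, all its zeros lying strictly inside) while monitoring the tangency points at the $M$-th roots of unity; the threshold $a/b=N/M$ is exactly where one crossing is lost, lowering the count by one. The smallest case $N=2$, $M=1$, where $g(y)=(y-1)\bigl(by-(a-b)\bigr)$ has its second zero $(a-b)/b$ inside, on, or outside the circle according as $a/b\lessgtr 2=N/M$, serves as a concrete check on the orientation conventions.
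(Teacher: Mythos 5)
Your preparatory steps are sound, and since the survey states this result of Dilcher et al.\ without proof, your argument has to stand on its own. The reduction $f(x)=g(x^d)$ with $g(y)=by^N-ay^M+(a-b)$, $\gcd(N,M)=1$, is valid and correctly converts the claim into ``$g$ has $M-1$ interior zeros if $a/b\ge N/M$, and $M$ if $a/b<N/M$''; the identity $|be^{iN\theta}+(a-b)|^2-a^2=2b(a-b)(\cos N\theta-1)$ does show that $y=1$ is the only zero of $g$ on the unit circle; and the multiplicity dichotomy at $y=1$ (simple iff $bN\ne aM$, double with $g''(1)=bN(N-M)>0$ when $bN=aM$), together with your local expansion of $g(e^{i\theta})$, is correct.

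But the theorem is not yet proved, and the missing piece is not routine bookkeeping: you never evaluate the change of argument of $g(e^{i\theta})$ over the major arc, nor show how it combines with the indentation to yield exactly $M$ versus $M-1$; you flag this yourself as ``the main obstacle,'' and neither of your two sketched routes closes it. Tracking sign changes of $\operatorname{Im}g(e^{i\theta})=b\sin N\theta-a\sin M\theta$ against the sign of $\operatorname{Re}g$ is precisely the hard core of the original paper: this is a trigonometric polynomial of degree $N$, with up to $2N$ zeros in $[0,2\pi)$, and the small winding number $M$ or $M-1$ emerges only after cancellations that must be controlled. The homotopy route is not merely incomplete but unworkable as stated: on $|y|=1$ one has $|-ae^{iM\theta}+(a-b)|\ge b=|be^{iN\theta}|$ with equality at every $M$-th root of unity (and $g(1)=0$), so Rouch\'e's hypothesis fails --- this theorem lives exactly in the degenerate case $|a|=|b|+|a-b|$ that the survey's preceding Rouch\'e discussion excludes. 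More structurally, your proposed dominant part $-ay^M+(a-b)$ does not depend on $N$ at all, whereas the answer changes with the sign of $a/b-N/M$; hence no argument treating $by^N$ as a uniform perturbation can distinguish $M$ from $M-1$, and the entire dichotomy must come out of the tangency analysis you have not carried out. If you want to complete your plan, a workable starting point is the identity $g(y)=b(y^N-1)-a(y^M-1)$, which on the circle gives $g(e^{i\theta})=2ie^{iM\theta/2}\bigl[be^{i(N-M)\theta/2}\sin(N\theta/2)-a\sin(M\theta/2)\bigr]$; the bracketed curve has imaginary part $b\sin((N-M)\theta/2)\sin(N\theta/2)$ with an explicit zero set, which makes the crossing count tractable --- but that count \emph{is} the proof, and it still has to be done separately in the two regimes of $\operatorname{sgn}(bN-aM)$.
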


M. A. Brilleslyper and L.E. Schaubroeck \cite{brilleslyper} counted the number of roots in the interior of the unit circle for the trinomial $x^n+x^m-1$. They proved that if $m=1$, then there are exactly $2\lfloor{\frac{n}{6}}\rfloor+1 $ roots in the interior of the unit circle, where $\lfloor{x }\rfloor$ denotes the greatest integer less than or equal to $x$. They further conjectured that 
\begin{conj}[Brilleslyper \& Schaubroeck \cite{brilleslyper}]\label{conjectureroot}
Let $n,m\in \N$ and $(n,m)=1$. Then the number of roots in the interior of the unit circle for the polynomial $x^n+x^m-1$ is 
\begin{equation*}
2\Bigl\lfloor{\frac{n+m-1}{6}}\Bigr\rfloor+1.
\end{equation*}
\end{conj}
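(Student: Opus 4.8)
The plan is to compute the number $N$ of zeros of $f(z)=z^n+z^m-1$ in $|z|<1$ directly from the argument principle, realising $N$ as the winding number of the boundary curve about the origin. Writing $z=e^{i\theta}$ and $f(e^{i\theta})=u(\theta)+iv(\theta)$ with
\[
u(\theta)=\cos n\theta+\cos m\theta-1,\qquad v(\theta)=\sin n\theta+\sin m\theta,
\]
I would first show (assuming for the moment that $f$ has no zero on $|z|=1$) that $N$ equals the number of crossings of the positive real axis by the curve $\theta\mapsto(u(\theta),v(\theta))$, counted with orientation. Using the product forms $v=2\sin\!\big(\tfrac{(n+m)\theta}{2}\big)\cos\!\big(\tfrac{(n-m)\theta}{2}\big)$ and $u+1=2\cos\!\big(\tfrac{(n+m)\theta}{2}\big)\cos\!\big(\tfrac{(n-m)\theta}{2}\big)$, the zeros of $v$ split into those with $\cos\!\big(\tfrac{(n-m)\theta}{2}\big)=0$, at which $u=-1<0$, and the points $\theta_k=\tfrac{2k\pi}{n+m}$, $0\le k<n+m$, at which $\sin\!\big(\tfrac{(n+m)\theta}{2}\big)=0$. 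Only the latter can meet the positive real axis, and a short computation of $v'(\theta_k)$ shows that whenever $u(\theta_k)>0$ the crossing is necessarily counterclockwise. Hence every contributing crossing has the same sign and
\[
N=\#\{\,0\le k<n+m:\ \cos n\theta_k+\cos m\theta_k>1\,\}.
\]

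The decisive simplification comes from the congruence $m\equiv-n\pmod{n+m}$: since $m\theta_k=\tfrac{2\pi mk}{n+m}$ and $\cos$ is even, $\cos m\theta_k=\cos n\theta_k$, so the condition collapses to $\cos\!\big(\tfrac{2\pi nk}{n+m}\big)>\tfrac12$. Because $(n,m)=1$ forces $(n,n+m)=1$, the map $k\mapsto nk\bmod(n+m)$ permutes a complete residue system, and therefore
\[
N=\#\Big\{\,0\le j<n+m:\ \cos\tfrac{2\pi j}{n+m}>\tfrac12\,\Big\}.
\]
This last quantity is a pure lattice-point count: $\cos\tfrac{2\pi j}{n+m}>\tfrac12$ holds exactly for $j$ with $j/(n+m)$ within distance $1/6$ of an integer. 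Counting these (the single index $j=0$, together with the symmetric blocks near $0$ and near $n+m$) and using the identity $\lceil \tfrac{L}{6}\rceil=\lfloor\tfrac{L-1}{6}\rfloor+1$ with $L=n+m$, I expect to obtain exactly $2\lfloor\tfrac{n+m-1}{6}\rfloor+1$, which is the asserted value and which reduces to the known $m=1$ formula $2\lfloor n/6\rfloor+1$.

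The main obstacle is the degenerate situation in which $f$ has zeros on the unit circle, so that the argument principle cannot be applied verbatim on $|z|=1$. I would show that this happens precisely when $6\mid n+m$: an on-circle zero forces $z^n,z^m,-1$ to be three unit vectors summing to $0$, hence $\{z^n,z^m\}=\{e^{i\pi/3},e^{-i\pi/3}\}$, which under $(n,m)=1$ occurs iff $\{n,m\}\equiv\{1,5\}\pmod 6$, equivalently $6\mid n+m$; in that case the only on-circle zeros are the two points with $n\theta_k\equiv\pm\pi/3$, and these are exactly the indices $k$ where the inequality above degenerates to the equality $\cos=\tfrac12$. To handle them I would run the whole argument on the slightly smaller circle $|z|=1-\varepsilon$ and let $\varepsilon\to0^{+}$; since the two boundary zeros contribute nothing to the interior count, the strict inequality automatically discards them and the lattice-point count goes through unchanged. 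Verifying that this perturbation does not alter any of the other crossing signs, and that no coincidences among the zeros of $v$ (points where $\sin\!\big(\tfrac{(n+m)\theta}{2}\big)$ and $\cos\!\big(\tfrac{(n-m)\theta}{2}\big)$ vanish simultaneously) create spurious contributions, is the part that will require the most care.
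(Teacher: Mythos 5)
First, a point of order: the paper does not prove this statement at all. It appears in the survey only as a quoted open conjecture of Brilleslyper and Schaubroeck \cite{brilleslyper}, and it is raised again (in more general form) among the open problems of the final section. So there is no proof in the paper to compare yours against; I can only evaluate the proposal on its own merits.

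On those merits, your skeleton is sound and every step you assert checks out. Since $m\equiv -n\pmod{n+m}$, at $\theta_k=2k\pi/(n+m)$ one has $\cos m\theta_k=\cos n\theta_k=:c$, hence $v'(\theta_k)=n\cos n\theta_k+m\cos m\theta_k=(n+m)c$, which is automatically positive when the crossing condition $u(\theta_k)=2c-1>0$ holds; and every zero of $v$ coming from the factor $\cos\bigl(\tfrac{(n-m)\theta}{2}\bigr)$ has $u=-1<0$. This last fact also disposes of your worry about simultaneous vanishing of the two factors: such points lie on the negative real axis and are irrelevant to the positive-ray count. So when $f$ has no zero on $|z|=1$, the winding number equals $\#\{0\le j<n+m:\cos(2\pi j/(n+m))>\tfrac12\}$ (the reindexing via $(n,n+m)=1$ is legitimate), and your lattice count of this as $2\lfloor (n+m-1)/6\rfloor+1$ is correct in both residue cases of $n+m$ modulo $6$. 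One can confirm the resulting numbers directly for $(n,m)=(3,2),(5,2),(4,3),(7,5)$.

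The genuine gap is exactly the one you flag and leave open: the case $6\mid n+m$. It does close, and here is the computation you are missing. The on-circle zeros are the primitive sixth roots of unity $z_0=e^{\pm i\pi/3}$ (equivalently, $\Phi_6(x)\mid x^n+x^m-1$ precisely when $\{n,m\}\equiv\{1,5\}\pmod 6$), and at such a zero $w:=z_0f'(z_0)=nz_0^n+mz_0^m=\tfrac{n+m}{2}\pm i\,\tfrac{(n-m)\sqrt3}{2}$, so that $\mathrm{Re}(w)=\tfrac{n+m}{2}>0$. Writing $f\bigl((1-\varepsilon)e^{i(\theta_0+s)}\bigr)=w(-\varepsilon+is)+O(s^2+\varepsilon^2)$ with $\theta_0=\arg z_0$, the unique zero of the imaginary part near $s=0$ occurs at $s=\varepsilon\,\mathrm{Im}(w)/\mathrm{Re}(w)+O(\varepsilon^2)$, and there the real part equals $-\varepsilon|w|^2/\mathrm{Re}(w)+O(\varepsilon^2)<0$. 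Thus on $|z|=1-\varepsilon$ the curve crosses the \emph{negative} real axis near each boundary zero and contributes nothing to the positive-ray count, while the transversal crossings (where $v'\ge (n+m)/2$) persist with their signs; hence the winding number equals the strict count for all small $\varepsilon$, which is what you need. Two further points for a complete write-up: your method counts zeros with multiplicity, so record that $f$ is separable — by the discriminant computation following Theorem \ref{greenfield}, $D_f=\pm n^n\pm(n-m)^{n-m}m^m\ne 0$ because $n^n>(n-m)^{n-m}m^m$; and winding-number counts of trinomial zeros in the unit disk are classical (going back to Bohl's 1908 work on trinomial equations, and in the circle of ideas of Melman's papers cited in this survey), so any claim of novelty should be checked against that literature first.
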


The remaining results  available in the literature focus on  the zeros of trinomials of the form $ax^n+bx+c$. For example, Nicolas and Schinzel \cite{AS12} studied the location of zeros of  $x^{n+1}-ax+(a-1),$  and later the results have been improved further by Hernane and Nicolas \cite{hernane}. The zeros of $(a-2)x^n+(a-1)x-b$, $a>2$ has been studied in \cite{ahn}. Fej\'er \cite{fejer} and Szeg$\hat{o}$ \cite{szego} independently showed that the trinomial $cx^n-x+1$ has root both in the regions $|z-1|\ge 1$ and $|z-1|\le 1$, for any non-zero real number $c$. Joyal et.al.  \cite{joyal} proved that if $n\ge 3,$ then the polynomial $cx^n-x+1$ has a root outside every circle that passes through the origin. 

P. Vassilev \cite{peter} considered trinomials with complex coefficients. He proved that the trinomial $z^n-pz-1$ has only one root in the interval $(0,1)$ when $p<0$. Recently A. Melman \cite{aaron} studied the problem of the location of zeros of trinomials extensively and improved many existing known results. 

The number of roots of a trinomial over a finite field has been estimated in \cite{kelley}. For similar studies over finite fields, one may look into   \cite{robert}, \cite{hua}, \cite{vandiver}, \cite{vandiver2}.

\section{Discriminant of trinomials}\label{sec:dt}
Suppose $f(x)=a_nx^n+a_{n-1}x^{n-1}+\cdots+a_1x+a_0\in \Z[x]$ is a polynomial of degree $n$. Let $\alpha_1, \alpha_2, \ldots, \alpha_n$ be the roots of $f(x)$. The {\em discriminant}  of $f(x)$ is defined as 
\begin{equation*}
D_f=a_n^{2n-2}\prod\limits_{i<j}(\alpha_i-\alpha_j)^2=(-1)^{\binom{n}{2}}a_n^{2n-2}\prod\limits_{i\ne j} (\alpha_i-\alpha_j).
\end{equation*}
The quantity $(-1)^{\binom{n}{2}}a_nD_f$ is called the {\em resultant} of $f(x)$. More precisely, the resultant of $f(x)$, sometimes denoted by $\Res(f, f')$, is defined as 
\begin{equation*}
\Res(f, f') =(-1)^{\binom{n}{2}}a_n^{2n-1}\prod\limits_{i<j}(\alpha_i-\alpha_j)^2=a_n^{2n-1}\prod\limits_{i\ne j}(\alpha_i-\alpha_j),
\end{equation*}
where $f'(x)=na_nx^{n-1}+(n-1)a_{n-1}x^{n-2}+\cdots+a_1$ is the formal derivative of $f(x)$. 

From the definition, it follows that $f(x)\in \Z[x]$ has  multiple roots if and only if $D_f=0$. If $f(x)\in \Z[x]$ has  multiple roots, then $f(x)$ is said to be a {\em non-separable} polynomial over $\Q$. Otherwise, it is said to be a {\em separable} polynomial  over $\Q$. 

There is another concept, called the discriminant of an algebraic number field.  Suppose $\K$ is an algebraic number field and $\Od_{\K}$ is the ring of integers of $\K$. Let $b_1, b_2,\ldots, b_n$ be an integral basis of $\Od_{\K}$ and let $\{\, \sigma_1, \ldots, \sigma_n\,\}$ be the set of embeddings of $\K$ into $\C$. Then the {\em discriminant } of $\K$, denoted by $\Delta_{\K}$, is defined as  
\begin{equation*}
\Delta_{\K}=\det(B)^2, \,\,\qquad \mbox{ where $B=(\sigma_i(b_j))_{i,j}$.}
\end{equation*}

\begin{ex*}
Let $d$ be a square-free integer and let $\K=\Q(\sqrt{d})$ be a quadratic field.  Then the discriminant of $\K$ is  (see \cite{nart} for details) 
\begin{equation*}
\Delta_{\K}=\begin{cases}
d, &\mbox{ if $d\equiv 1\pmod{4};$}\\
4d, &\mbox{ if $d\equiv 2, 3\pmod{4}.$}
\end{cases}
\end{equation*}
\end{ex*}
The discriminant of a polynomial and discriminant of an algebraic number field are closely related. Let $f(x)$ be a monic irreducible polynomial with a root $\alpha$ and $\K=\Q(\alpha)$. Then the discriminant of $f(x)$ is 
\begin{equation*}
D_f=i(\alpha)^2 \Delta_{\K}, \,\, \qquad \mbox{ where $i(\alpha)=\# [\Od_{\K}: \Z[\alpha]].$}
\end{equation*}

If $D_f$ is square-free, then $i(\alpha)^2=1$ would imply that $\Od_{\K}=\Z[\alpha],$ and hence $\Od_{\K}$ has a basis in powers of $\alpha$. In this case, $\K$ is said to be a {\em monogenic field}. There are several advantages of knowing a field to be monogenic. For example, let $\K$ be a monogenic field. Then the Galois closure $\L$ of $\K$ has Galois group $S_n$ and $\L$ is everywhere unramified over its subfield $\Q(\sqrt{D_f})$.  Therefore, the knowledge of the square-freeness of the discriminant of a polynomial is useful. However, it is very difficult to provide a formula for the discriminant of a polynomial with more than three non-zero terms. To the best of our knowledge, 
only quadrinomials for which the discriminant is known as of today are of the form $x^n+t(x^2+bx+c)$  (see \cite{otake} for details).

The reader can refer to Chapter 10 of \cite{emilartin} for a detailed literature review on the discriminants of trinomials up to 1980.  P. Lefton \cite{lefton} proved the discriminant formula of an arbitrary trinomial by using a polynomial and its derivatives along with the properties of roots of unity. He showed that the discriminant of $ax^n+bx^m+c$ is 
\begin{equation*}
D=(-1)^{\binom{n}{2}}a^{n-m-1}c^{m-1} (n^{n/d}a^{m/d}c^{(n-m)/d}-(-1)^{n/d}(n-m)^{(n-m)/d}m^{m/d}b^{n/d})^d,
\end{equation*}
where $d=(n,m)$. D. Drucker and D. Goldschmidt \cite{drucker} developed an ingenious method to compute the determinant of a matrix. By using this method,  G.R.Greenfield and D. Drucker \cite{greenfield} proved that
 
\begin{theorem}[Greenfield \& Drucker \cite{greenfield}]\label{greenfield}
The discriminant of the trinomial $x^n+bx^m+c$ is 
\begin{equation*}
D=(-1)^{\binom{n}{2}}c^{m-1}\left[ n^{n/d}c^{n-m/d}-(-1)^{n/d}(n-m)^{n-m/d}m^{m/d}b^{n/d}\right]^d,
\end{equation*}
where $d=(n,m)$.
\end{theorem}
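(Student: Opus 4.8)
The plan is to compute the discriminant through the classical identity relating it to the resultant $\Res(f,f')$ and the values of $f'$ at the roots of $f$. Writing $f(x)=x^n+bx^m+c$ with roots $\alpha_1,\dots,\alpha_n$, and setting $d=(n,m)$, $k=n-m$, the definitions in this section give (since $f$ is monic) $D_f=(-1)^{\binom{n}{2}}\prod_{i=1}^n f'(\alpha_i)$. The derivative factors as $f'(x)=x^{m-1}(nx^{k}+mb)$, so $f'(\alpha_i)=\alpha_i^{m-1}\bigl(n\alpha_i^{k}+mb\bigr)$ and the product splits as
\[
\prod_i f'(\alpha_i)=\Bigl(\prod_i\alpha_i\Bigr)^{m-1}\prod_i\bigl(n\alpha_i^{k}+mb\bigr).
\]
The first factor is immediate from Vieta's formulas, $\prod_i\alpha_i=(-1)^n c$, contributing $\bigl((-1)^n c\bigr)^{m-1}$.

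For the second factor I would recognize $\prod_i(n\alpha_i^{k}+mb)$ as a resultant and flip it to the other side. With $g(x)=nx^{k}+mb$ one has $\prod_i g(\alpha_i)=\Res(f,g)=(-1)^{nk}n^{n}\prod_{g(\beta)=0}f(\beta)$, where $\beta$ now runs over the $k$-th roots of $-mb/n$. Fixing one such $\gamma$ and a primitive $k$-th root of unity $\omega$, the roots are $\gamma\omega^{j}$ for $0\le j<k$. The crucial simplification is that $k=n-m$ forces $\omega^{jn}=\omega^{jm}$, whence, after using $\gamma^{k}=-mb/n$ to rewrite $\gamma^n+b\gamma^m$,
\[
f(\gamma\omega^{j})=\omega^{jm}\bigl(\gamma^{n}+b\gamma^{m}\bigr)+c=A\,\omega^{jm}+c,\qquad A:=\tfrac{(n-m)b}{n}\gamma^{m}.
\]

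The remaining product $\prod_{j=0}^{k-1}(A\omega^{jm}+c)$ I would evaluate by exploiting $(m,k)=d$: as $j$ ranges over a complete residue system modulo $k$, the value $\omega^{jm}$ runs $d$ times through the full set of $(k/d)$-th roots of unity. Hence the product is the $d$-th power of $\prod_{\eta^{k/d}=1}(c+A\eta)=c^{k/d}-(-1)^{k/d}A^{k/d}$, the standard evaluation of a polynomial at all roots of unity. Substituting $A^{k/d}=\bigl(\tfrac{(n-m)b}{n}\bigr)^{k/d}(\gamma^{k})^{m/d}=(-1)^{m/d}\tfrac{(n-m)^{k/d}m^{m/d}b^{n/d}}{n^{n/d}}$ (using $k/d+m/d=n/d$), and absorbing the prefactor $n^n=(n^{n/d})^d$ into the $d$-th power, collapses everything into $\bigl(n^{n/d}c^{(n-m)/d}-(-1)^{n/d}(n-m)^{(n-m)/d}m^{m/d}b^{n/d}\bigr)^d$, up to the overall sign $(-1)^{nk}$.

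Finally I would assemble the two factors and reconcile the signs. Multiplying $\bigl((-1)^n c\bigr)^{m-1}$ against the sign $(-1)^{nk}$ yields $(-1)^{n(m-1)+n(n-m)}=(-1)^{n(n-1)}=1$, so all spurious signs cancel and one is left exactly with $D_f=(-1)^{\binom{n}{2}}c^{m-1}\bigl[n^{n/d}c^{(n-m)/d}-(-1)^{n/d}(n-m)^{(n-m)/d}m^{m/d}b^{n/d}\bigr]^d$. The main obstacle is the bookkeeping in the two middle steps: correctly tracking how the $k$-th roots of unity collapse into a $d$-th power via $(m,k)=d$, and keeping every power of $\gamma^k=-mb/n$ and every factor of $(-1)$ under control, the final identity $(-1)^{n(n-1)}=1$ serving as exactly the consistency check that the computation has been carried out correctly. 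An alternative route, closer to the Drucker--Goldschmidt determinant method cited above, would instead evaluate the Sylvester matrix of $f$ and $f'$ directly by exploiting its sparse trinomial structure, but the root-theoretic argument above seems more transparent.
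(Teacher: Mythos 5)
Your argument is correct: I checked each step, including the three delicate points, namely (i) the identity $D_f=(-1)^{\binom{n}{2}}\prod_i f'(\alpha_i)$ for monic $f$, which follows from the definitions quoted in this section; (ii) the collapse of $\{\omega^{jm}:0\le j<k\}$ into $d$ copies of the $(k/d)$-th roots of unity, which is exactly $\gcd(m,k)=\gcd(m,n-m)=\gcd(m,n)=d$, together with the evaluation $\prod_{\eta^{k/d}=1}(c+A\eta)=c^{k/d}-(-1)^{k/d}A^{k/d}$; and (iii) the final sign cancellation $(-1)^{n(m-1)}(-1)^{n(n-m)}=(-1)^{n(n-1)}=1$. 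With $A^{k/d}=(-1)^{m/d}(n-m)^{(n-m)/d}m^{m/d}b^{n/d}/n^{n/d}$ absorbed against $n^n=(n^{n/d})^d$, your bracket reproduces the stated formula (the exponent written ``$n-m/d$'' in the statement is indeed to be read as $(n-m)/d$, as you did). However, this is not the route behind the theorem as the paper presents it: the paper is a survey and gives no proof, attributing the result to Greenfield and Drucker, whose proof proceeds by the Drucker--Goldschmidt determinant-evaluation method applied to the resultant matrix --- the ``alternative route'' you mention only in your closing sentence. What you have written is essentially the proof the paper attributes to Lefton, quoted immediately above the theorem: Lefton's formula for $ax^n+bx^m+c$ specializes at $a=1$ to the present statement, and his method is described there precisely as using the polynomial, its derivative, and properties of roots of unity. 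The trade-off: your resultant/roots-of-unity computation is elementary, self-contained, and generalizes at once to leading coefficient $a\ne 1$ (replace $\gamma^k=-mb/n$ by $-mb/(na)$ and track the extra powers of $a$, recovering Lefton's full formula), whereas the determinant method avoids passing to a splitting field and is a technique of independent interest for structured Sylvester matrices. One half-sentence worth adding: since $f$ is a trinomial, $mb\ne 0$, so $g(x)=nx^k+mb$ has $k$ distinct nonzero roots, which your product over $\gamma\omega^j$ tacitly uses.
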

Let $(n,m)=1$ and $f(x)=x^n\pm x^m\pm 1$. From Theorem \ref{greenfield}, the discriminant of $f$ is  
$$D_f=\pm n^n\pm (n-m)^{n-m}m^m.$$ 

Recently D. Boyd et al. \cite{boyd1} attempted to find the square-free values of this discriminant. They found several families of polynomials whose discriminant has a square factor. For example, if $n\equiv 2\pmod{6},$ then $\frac{(n^2-n+1)^2}{9}$ divides $n^n-(n-1)^{n-1}$. That is, the discriminant of  $x^n-x-1$ has a square-factor when $n\equiv 2\pmod{6}$. However, such a discriminant is sporadic. If $n\le 1000$, then there are only $6$ such values of $n$, $n\in \{\, 130, 257, 487, 528,815, 897\,\}$. It can be seen that $83^2|130^{130}+129^{129}$ and $59^2$ divide  $n^n+(-1)^n(n-1)^{n-1}$ for the remaining values of $n$. They conjectured that 
\begin{conj}[Boyd et al. \cite{boyd1}]
The set of positive integers $n$ such that $n^n+(-1)^n(n-1)^{n-1}$ is square-free has density $0.9934466\ldots $, correct to that many decimal places. 
\end{conj}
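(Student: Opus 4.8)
Throughout write $D_n = n^n + (-1)^n(n-1)^{n-1}$, so that the conjecture asks for the natural density of $\{\,n\in\N : D_n \text{ is square-free}\,\}$. The natural plan is the classical one for square-free values of an arithmetic sequence: for each prime $p$ determine the density $\delta_p$ of the set $\{\,n : p^2\mid D_n\,\}$, and then show that the global density factors as
\begin{equation*}
\prod_{p}(1-\delta_p).
\end{equation*}
The numerical constant $0.9934466\ldots$ should then emerge as the value of this rapidly convergent product, evaluated over the small primes.

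First I would make the local densities precise. Fix an odd prime $p$. If $p\mid n$ then $p^2\mid n^n$ (as $n\ge 2$) while $(n-1)^{n-1}$ is a $p$-adic unit, so $D_n$ is a unit mod $p^2$; symmetrically for $p\mid n-1$. Hence only $n$ with $p\nmid n(n-1)$ can be bad, and for these $n^n\bmod p^2$ depends on $n$ through its residue mod $p^2$ (the base) and mod $\varphi(p^2)=p(p-1)$ (the exponent), while the sign $(-1)^n$ depends on $n\bmod 2$. Consequently $D_n\bmod p^2$ is purely periodic in $n$ with period $P_p=\lcm(p^2,p(p-1),2)=p^2(p-1)$, and
\begin{equation*}
\delta_p=\frac{1}{P_p}\,\#\{\,n\bmod P_p : p^2\mid D_n\,\}\in\Q .
\end{equation*}
The prime $p=2$ is handled the same way with $\varphi(4)=2$ and a short direct count mod $4$. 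This step is routine once one observes the vanishing of one term when $p\mid n(n-1)$, and it already explains why $\delta_p$ is far smaller than the naive $1/p^2$, pushing the product close to $1$.

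Next I would run the square-free sieve. Writing the indicator of square-freeness as $\sum_{d^2\mid D_n}\mu(d)$ and summing over $n\le N$ gives
\begin{equation*}
\#\{\,n\le N : D_n\text{ square-free}\,\}=\sum_{d}\mu(d)\,\#\{\,n\le N : d^2\mid D_n\,\}.
\end{equation*}
For a fixed square-free $d$ one expects $\#\{\,n\le N : d^2\mid D_n\,\}=\rho(d)\,N+O\!\big(\prod_{p\mid d}P_p\big)$ with $\rho(d)=\prod_{p\mid d}\delta_p$. This multiplicativity is the CRT step, and it needs a little care because the periods $P_p$ share the even part of $p-1$ rather than being pairwise coprime; one separates the dependence of the condition $p^2\mid D_n$ on the \emph{base} $n\bmod p^2$ (whose moduli are coprime across primes) from its dependence on the \emph{exponent} residues, and averages over the latter. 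Granting this, the contribution of the $d$ below any fixed bound $y$ tends to $N\prod_{p\le y}(1-\delta_p)$, and letting $y\to\infty$ produces the conjectured constant.

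The hard part is the tail. Since $D_n$ grows like $n^n$, a square divisor $p^2$ can arise from primes $p$ as large as $n^{n/2}$, and the terms with $d$ of this size are exactly those not controlled by the trivial per-modulus bound used for fixed $d$. Showing that $\sum_{d>y}\#\{\,n\le N:d^2\mid D_n\,\}=o(N)$ uniformly amounts to proving that $D_n$ is square-free for almost all $n$ and that large square factors are genuinely sparse; for sequences of exponential size this is precisely the regime where no unconditional estimate is available, and a rigorous treatment appears to require the $abc$ conjecture (or an equally strong bound on the square-full part of $D_n$). This is, I expect, why the assertion is recorded only as a conjecture, and why the density is stated to exactly the number of decimal places to which the partial product has been computed rather than proved.
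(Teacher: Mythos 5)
You are attempting to ``prove'' a statement that the survey records only as a conjecture of Boyd et al.\ \cite{boyd1}; the paper offers no proof (only a rigorous upper bound $0.99344674$ for the density, due to Boyd et al., and a lower bound due to Shparlinski, are known). So the only question is whether your heuristic reduction is the one that actually underlies the conjectured constant, and it is not. Your local model treats $p^2\mid D_n$ as an equidistribution event inside the period $P_p=p^2(p-1)$, of probability roughly $1/p^2$; your claim that excluding $p\mid n(n-1)$ makes $\delta_p$ ``far smaller than the naive $1/p^2$'' is false, since that exclusion removes only a proportion $\approx 2/p$ of the residues. Under your model (note $D_n$ is always odd, so $p=2$ drops out) the product would be about
\begin{equation*}
\prod_{p\ge 3}\left(1-\frac{1}{p^2}\right)=\frac{8}{\pi^2}\approx 0.81,
\end{equation*}
nowhere near $0.9934466\ldots$, and it would predict on the order of $190$ non-square-free values of $D_n$ with $n\le 1000$, whereas there are exactly $6$ (all coming from $59^2$ and $83^2$). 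So your heuristic cannot produce the conjectured constant even as a main term; the $abc$/tail difficulty you raise is real but is not where the statement's content lies.

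The structure you are missing is a rigidity (the key discovery behind the conjecture). For $p\nmid n(n-1)$, the unique possible double root of $x^n-x-1$ modulo $p$ is $r\equiv n/(1-n)$, and one computes exactly
\begin{equation*}
D_n=\pm\, n(1-n)^{n-1}\left[\left(\frac{n}{1-n}\right)^{n-1}-\frac{1}{n}\right],
\end{equation*}
so that $v_p(D_n)=v_p\left(\left(\frac{n}{1-n}\right)^{n-1}-\frac{1}{n}\right)$. A short calculation shows this last quantity is multiplied by the unit $1+\frac{p}{n}$ modulo $p^2$ when $n$ is replaced by $n+p(p-1)$; hence the condition $p^2\mid D_n$ depends only on $n\bmod p(p-1)$, \emph{not} on $n\bmod p^2(p-1)$. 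Consequently, for each prime either \emph{no} $n$ at all satisfies $p^2\mid D_n$, or entire residue classes modulo $p(p-1)$ do --- a Wieferich-type coincidence, which one can check fails for every small prime (e.g.\ $\delta_3=\delta_5=\delta_7=0$; indeed $3\nmid D_n$ for all $n$). The conjectured density is then assembled from (i) the rigidity statement, (ii) exhaustive computation identifying the sporadic ``Wieferich'' primes such as $59$ and $83$ and their classes, and (iii) a probabilistic model for the uncomputed tail, roughly one Wieferich class per prime, contributing about $1/(p(p-1))$ each to the deficit. Your proposal, which never leaves the naive independent-residues model, has a genuine gap precisely here: it computes the wrong local densities, identifies the wrong source of the density deficit, and therefore could not recover the constant $0.9934466\ldots$ even heuristically.
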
   

They obtain $0.99344674$ as an upper bound for this density. I.E. Shparlinski \cite{igor} gave a lower bound of the above density. 

If the $abc$ conjecture is true, then Anirban Mukhopadhyay et al. \cite{anirban} showed that for every odd $n\ge 5$, there is a positive proportion of pairs $(b,c)$ for which the trinomial $x^n+bx+c$ is irreducible and has square-free discriminant. By using square-sieve and bounds of character sums, I.E. Shparlinski \cite{igor1} proved a weaker but unconditional version of this result. K. Kedlaya\cite{kedlaya} gave a method to construct a monic irreducible polynomial $f(x)\in \Z[x]$ of degree $n\ge 2$ having exactly $r$ real roots, $0\le r\le n,$ with a square-free discriminant.  

On the other hand, discriminants of algebraic number fields have also been explored by several mathematicians. Suppose $\alpha$ is a root of the irreducible polynomial $x^n+bx+c$ and $\K=\Q(\alpha)$. K. Komatsu \cite{komatsu} found the discriminant of $\K$ as well as an integral basis for $\K$. Later, P. Llorente et al. \cite{vila}  improved the results of Komatsu.  

B. K. Spearman and K.S. Williams \cite{spearman} restricted to trinomials $f(x)=x^5+ax+b,$ whose Galois group is a dihedral group of order $10$. If $\alpha$ is a root of $f$ and $\K=\Q(\alpha)$, then they give an explicit formula of $\Delta_{\K}$ in terms of $a$ and $b$.  
A. Alaca and \c{S}. Alaca \cite{alaca} utilized a method described by \c{S}. Alaca \cite{alaca1} and found a $p$-integral base of $\K$ for every prime $p$, which in turn helps them to provide a formula for $\Delta_{\K}$, where $\K=\Q(\alpha), \alpha$ being a root of the irreducible polynomial $x^5+bx+c$.

We have seen that the polynomial $x^n-x-1$ is irreducible with discriminant $n^n+(-1)^n(n-1)^{n-1}$ and has Galois group $S_n$. Boyd et al. \cite{boyd1} showed that there are certain values of $n$ (see above) for which the discriminant of $x^n-x-1$ has a square factor. J. Lagarias(see problem 99:10, \cite{lagarias}) asked 

{\em   Is there a monogenic field whose Galois group is $S_n$ for every $n\ge 5$? }

K. Kedlaya \cite{kedlaya} answered this question affirmatively through a different approach. Boyd et al. \cite{boyd1} answered this by using square freeness of discriminant. For similar studies in this direction, we refer to  \cite{uchida}, \cite{uchida1}, \cite{yamamoto}, \cite{yamamura}.

Galois group of trinomials of different forms has been considered by several authors. One may consult \cite{bishnoi}, \cite{stephen}, \cite{brown}, \cite{bruin}, \cite{cohen}, \cite{cohen1}, \cite{hermez},  \cite{jones}, \cite{osada}, \cite{osada1}, \cite{spearman1} for further studies. For example, H. Osada \cite{osada} proved that the Galois group of $x^n-x-1$ over $\Q$ is $S_n$ for all $n\ge 2$. Later Osada \cite{osada1} gave some number-theoretic conditions on the coefficients $b,c$ so that $x^n+bx^m+c$ has Galois group either $S_n$ or $A_n$ over $\Q$.

In contrast, a class of trinomials has been determined by A. Bishnoi and S.A. Khanduja  \cite{bishnoi} having Galois group $S_n$. In particular, they showed that if $n\ge 8, \frac{n}{2}<p<n-2$ and  $|c^{n-p}+(-1)^{n+1}(\frac{n-p}{p})^{n-p}(n!)^pb^n|$ is not a perfect square, then $x^n+n!p^nbx^p+n!c$ has Galois group $S_n,$ $p$ being a prime number. One such polynomial is $x^6+864x^5+720$.

The discriminant of trinomials has been explored over finite fields as well.  The discriminant of a trinomial divisible by a square over a finite field has been entreated in \cite{hanson}. Suppose $\F$ is a finite field of order $q=p^n$ for some prime $p$. Then a polynomial $f(x)\in \F[x]$ is said to be a {\em permutation polynomial} if $c\mapsto f(c)$ is a permutation from $\F$ to itself. Permutation trinomials has been studied in \cite{bartoli}, \cite{ding}, \cite{hou}, \cite{hou1}, \cite{hou2}.

\section{Open Problems}
We have already mentioned several conjectures, open problems, and asked a few questions in respective places. In this section, we would like to ask few more questions related to the irreducibility of trinomials and locations of zeros of trinomials which may give impetus to the reader to work further in this direction. 

Recall that the reducibility nature of $x^n+2\ep_1x^m+\ep_2$ has been resolved by Theorem \ref{filasetatrinomial}. If $p$ is an odd prime, from Theorem \ref{minkusinski}, there are only finitely many values of $\frac{n}{m}$ for which $x^n+p\ep_1x^m+\ep_2$ is reducible.
\begin{qu}
Let $p$ be an odd prime, $\ep_1,\ep_2\in \{\, -1, +1\,\}$ and $n>m\ge 1$. What are the values of $n$ and $m$ for which $x^n+p\ep_1x^m+\ep_2$ is reducible?
\end{qu}
From Theorem \ref{schinzeltrinomial}, there are only finitely many trinomials $x^n+b\ep_1x^m+c\ep_2$ whose non-cyclotomic part is reducible. However, it is not known whether there are only finitely many reducible trinomials of the form $x^n+b\ep_1x^m+c\ep_2$. In particular, 
\begin{qu}
Let $n>m\ge 1, b\in \N,$ $\ep_1,\ep_2\in \{\, -1, +1\,\}$. Are there finitely many reducible trinomials of the form $x^n+b\ep_1x^m+\ep_2$?
\end{qu}

From Theorem \ref{ljunggrentrinomial} and Theorem \ref{biswajit2trinomial}, the trinomial $x^n+\ep_1x^{n-1}+\ep_2$ is reducible only when $n=6t+2, \ep_1=-\ep_2=-1$ or $n=6t+5, \ep_1=\ep_2=-1,$ $t\in \N$. 
If $b\ge 2$, then the reducibility of $x^n+b\ep_1x^{n-1}+c\ep_2, b\ne c$ has been determined under the certain condition on $c$ in Theorem \ref{harringtonsummary}. For $b=c\ge 2$, Harrington conjectured (recall Conjecture \ref{harringtonconjecture}) that apart from two polynomials, $x^n+b\ep_1x^{n-1}+b\ep_2$ is irreducible.
\begin{qu}
Let $c\in \N$ and $c>1$, $\ep_1,\ep_2\in \{\, -1, +1\,\}$. What are the values of $c$ for which $x^n+\ep_1x^{n-1}+c\ep_2$is irreducible?
\end{qu}
More generally, we ask the following question
\begin{qu}
Let $b,c\in \N, b, c\ge 2,$ and $c>2(b-1),$ $\ep_1,\ep_2\in \{\, -1, +1\,\}$. For what values of $b$ and $c$, the polynomial $x^n+b\ep_1x^{n-1}+c\ep_2$ is irreducible?
\end{qu}

Filaseta et al. \cite{filaseta8} gave a sufficient condition for the irreducibility of $x^{2p}+b\ep_1x^p+c\ep_2$. There are only finitely many values of $p$ for which the irreducibility question remains open. 

\begin{qu}
Let $p$ be a prime number, $b,c\in \N$, $\ep_1,\ep_2\in \{\, -1, +1\,\}$ and $$p|(181^2-1)\underset{q|b}{\prod\limits_{q \text{prime}}} (q^2-1).$$ If $b, c$ do not satisfy any of the following conditions
\begin{equation*}
    b^2-4c\ep_2=d^2 \text{ or } c=b^2=d^{2p}, p\ge5 \text{ or } c=\frac{b^2}{2}=2^pd^{2p}, p\ge 3 \text{ or } c=\frac{b^2}{3}=3^pd^{2p}, p\ge 5
\end{equation*}
for some $d\in \Z,$ then which are the families among $x^{2p}+b\ep_1x^p+c\ep_2$ irreducible?
\end{qu}
In Theorem \ref{constantprimesummary}, the irreducibility of $x^n+b\ep_1x^m+p\ep_2$ has been discussed except for $p+1\le b\le p^{n-1}+1, p\nmid b$. 

\begin{qu}
Let $p$ be a prime number, $b\in \N$, and let $f(x)=x^n+b\ep_1x^m+p\ep_2.$ For what values of $b$, $p+1\le b\le p^{n-1}+1, p\nmid b$, the polynomial $f(x)$ is irreducible?
\end{qu}

The number of roots of $x^n+x-1$ which lies inside the unit circle has been counted in \cite{brilleslyper} and they conjectured (see Conjecture \ref{conjectureroot}) the exact number of roots within the unit circle for arbitrary values of $m$.

\begin{qu}
Let $n>m\ge 1$ and $\ep_1,\ep_2\in \{\, -1, +1\,\}$. How many roots of $x^n+\ep_1x^m+\ep_2$ lies in the interior of the unit circle?
\end{qu}

More generally, one can look for polynomials of the form $f(x)=ax^n+bx^m+c,$ where $a,b,c$ are non-zero real numbers. From Theorem \ref{dilcher} and related discussion, one can find the number of roots of $f(x)$ that are inside the unit circle provided $|b|\ge |a|+|c|$. 

\begin{qu}
Let $a,b,c\in \R\backslash\{\, 0\,\}$ and $n>m\ge 1.$ How many roots of $ax^n+bx^m+c$ lies in the interior of the unit circle?
\end{qu}

{\bf Acknowledgement:} We thank the reviewer for providing valuable suggestions and comments on an earlier version of the manuscript.

\bibliographystyle{plain}
\bibliography{refer}

\end{document}